\newtheorem{thm}{Theorem}[section]
\newtheorem{cor}[thm]{Corollary}
\newtheorem{lem}[thm]{Lemma}
\newtheorem{prop}[thm]{Proposition}
\theoremstyle{definition}    
\newtheorem{defn}[thm]{Definition}
\theoremstyle{remark}
\newtheorem{rem}[thm]{Remark}  
\newtheorem{ex}[thm]{Example}
\numberwithin{equation}{section}
\newcommand{\al}{\alpha}
\begin{document}
\thanks{The author was supported by the DST INSPIRE Fellowship.}
\subjclass{Primary: 32A05; Secondary: 32A07}
\author{G. P. Balakumar}
\address{G.P. Balakumar: 
Indian Statistical Institute, Chennai -- 600113, India}
\email{gpbalakumar@gmail.com}
\pagestyle{plain}

\begin{abstract}
The purpose of this article is to provide an exposition of domains of convergence of power series of several complex variables without recourse to relatively advanced notions of convexity. 
\end{abstract} 
\title{Power series in several complex variables.}
\maketitle
\section{Notations, Preliminaries, Introduction.}
\noindent A nice exposition of a multidimensional analogue of the Cauchy  -- Hadamard formula on the radius of convergence of power series, can be found in the book \cite{S} by B. V. Shabat, which naturally leads one\footnote{This is being written with a graduate student in mind or those with no prior knowledge of the matter here. The remaining footnotes may be ignored on a first reading.} to the conviction that domains of convergence of a power series in several complex variables constitute precisely, the class of logarithmically convex complete multi-circular domains.  In the present expository essay, we provide an alternative route to this result which avoids relatively advanced notions of convexity, such as holomorphic convexity -- this is natural in a systematic presentation of the subject 
of several complex variables, where a first goal lies in obtaining various characterizations of the collective of {\it all} domains of holomorphy, of which domains of convergence of power series, form a very small (and the simplest) sub-class. 
We emphasize that this is an expository essay that has been inspired by Shabat's treatment \cite{S}. There have been other sources as well; instead of enlisting all the sources here, we shall cite them at appropriate places.\\

\noindent We show how one might guess the aforementioned result on the characterizing  features of domains of convergence of power series in higher dimensions and help develop a feel for this simplest class of domains of holomorphy. Indeed, we shall show that on any given logarithmically convex 
complete multi-circular domain $D\subset \mathbb{C}^N$, all power series with its domain of convergence coinciding with $D$, can be seen to arise in one particular fashion. Namely, every power series with $D$ as its domain of convergence, can be recast as a sum of monomials, indexed by sequences of rational points on the positive face of the standard simplex in $\mathbb{R}^N$, converging to prescribed points of a countable dense subset of the normalized effective domain of the support function of the logarithmic image of $D$! This then leads to a natural way of writing down explicit power series converging precisely on any such given $D$, without having to deal with the case of an unbounded $D$ separately as done in the nice set of lecture notes by H. Boas, available at his web-page \cite{Bo}. On the other hand given any power series, we shall see how to not only write down a defining function for the domain of convergence in $\mathbb{C}^N$ but also the support function of the convex domain in $\mathbb{R}^N$ formed by its logarithmic image, directly in terms of the coefficients of the given power series.  
All of this is perhaps folklore matter but our intent here is to provide a treatment from an elementary standpoint of our goal stated in the abstract, thoroughgoing on certain fundamental matters not found recorded or expounded upon in the literature to the knowledge of this author.\\

\noindent Let us set the stage up for our discussion to begin in the next section. Let $\mathbb{N}_0 = \mathbb{N} \cup \{0\}$ and $N \in \mathbb{N}$. For $J=(j_1, \ldots,j_N) \in \mathbb{N}_0^N$, define $\vert J \vert = \vert j_1 \vert + \ldots + \vert j_N \vert$
and for $z \in \mathbb{C}^N$, let $z^J$ stand for the monomial $z_1^{j_1}z_2^{j_2} \ldots z_N^{j_N}$. Let $(\mathbb{R}^+)^N$ denote the $N$-fold Cartesian product of the multiplicative group of $\mathbb{R}^+$ of positive reals; its closure in $\mathbb{R}^N$ is the monoid $(\mathbb{R}_+)^N$ 
with $\mathbb{R}_+$ being the multiplicative monoid of non-negative reals. For $J \in \mathbb{N}_0^N$, define
 $J!=j_1!j_2!\ldots j_N!$ with the understanding that $0!=1$.
We are interested here with the case $N>1$. Unless explicitly specified, our indexing set in all countable summations is $\mathbb{N}_0^N$. A connected open subset of $\mathbb{C}^N$ is called a {\it domain} \footnote{More generally, we shall refer to any connected open subset of any topological space $X$ as a domain in $X$.}.  A viewpoint which has been decisive for the exposition here, is that the most tangible manner of describing a domain is by supplying sufficient data about its boundary, the simplest of which is specifying a defining function for the boundary of the domain and when the domain is convex, the supporting function for it.  Two fundamental bounded domains which will appear often in the sequel are the unit ball with respect to the standard $l^2$-norm on $\mathbb{C}^N$ given by
\[
\mathbb{B}^N = \{ z \in \mathbb{C}^N \; : \; \vert z_1 \vert^2 + \ldots + \vert z_N \vert^2 < 1 \}
\]
and the unit ball with respect to the $l^\infty$-norm on $\mathbb{C}^N$ given by the $N$-fold Cartesian product of $\Delta$ the unit disc in $\mathbb{C}$, 
namely 
\[
\mathbb{U}^N := \Delta^N := \{ z \in \mathbb{C}^N \; : \; \vert z_j \vert <1 \text { for all } j=1,2, \ldots,N \},
\]
which is called the standard unit polydisc; while balls in the $l^\infty$-norm will be called polydiscs, balls in the $l^2$-norm will simply be referred to as `balls'. Further, $N$-fold Cartesian products of discs $\Delta(z^0_j,r_j)$ of varying radii $r_j$ and varying centers $z^0_j$ for $j$ varying 
through $\{1,2 ,\ldots, N\}$, called the polydisc with polyradius $r=(r_1, \ldots,r_N)$ centered at the point $z^0$ in $\mathbb{C}^N$, will be denoted by $P(z^0,r)$.  To indicate the practice of brevity in notation that will be adopted: the center of such sets will be dropped out of  notation and denoted $P$ or $B$, when it happens to be the origin or if they are not important for the discussion at hand; or for instance if the radius does need to be kept track of, discs in $\mathbb{C}$ about the origin with radius $r$ will be 
denoted $\Delta_{r}$. Finally, let us mention the one other norm to make an explicit appearance which is, the largest among all norms on $\mathbb{R}^N$ which assigns unit length to its standard basis vectors namely, the $l^1$-norm. Its unit ball is known by various names: co-cube/cross-polytope/orthoplex; the boundary of this orthoplex is the standard simplex $S_N$ and its intersection with the non-negative orthant is called the probability simplex given by
\[
PS_N = \{ x \in \mathbb{R}^N \; :\;  x_1  + \ldots +  x_N  = 1, \text{ and }  x_j  \geq 0 \text{ for all } j\}
\]
which may be noted to be the convex hull of the standard basis of $\mathbb{R}^N$.\\

\noindent  We summarize several basic facts that will be used tacitly in the sequel. Let $I$ be the unit interval $[0,1] \subset \mathbb{R}$, which may be noted to be closed under a pair of basic algebraic binary operations: one, the arithmetic mean and the other, the geometric mean of any two numbers from 
$[0,1]$. Infact, both these operations may be modified to give rise to a whole range of binary operations on $I$: for any pair of numbers $a,b$ their weighted arithmetic mean, corresponding to any 
fixed $t \in I$, is given by $(1-t)a + t b$ while their weighted geometric mean is given by $a^{1-t} b^{t}$. Furthermore, there is a relation between this pair of binary operations, given by the order relation, called the H\"older's inequality, namely,
\[
a^{1-t} b^{t} \leq (1-t)a + t b. 
\]   
The above family of binary operations \footnote{None of these binary operations of forming the means, is associative. The author thanks Prof. Harold Boas for pointing this out.} may be carried out on any sub-interval of $\mathbb{R}_+$ and coordinate-wise in higher dimensions as well, in an appropriate manner which we now discuss. Indeed, let $V$ be any finite dimensional vector space over the reals; there is for each $t \in I$ a pair of binary operations $\Phi_t, \Psi_t$. While one of them, to be the one denoted $\Phi_t$ in the sequel, 
 requires only the affine-space structure of $V$, the other requires coordinatizing $V$. Specifically, $\Phi_t$ corresponds to the action of forming the straight line segment joining \footnote{Straight line formation and convex sets can be defined in any affine space; circular arcs, to be introduced later, in affine spaces (of dimension at least two) with an origin i.e., vector spaces and logarithmic convexity in normed vector spaces.} a pair points; algebraically, $\Phi_t$ consists in forming coordinate-wise weighted arithmetic mean. Indeed, $\Phi_t: V \times V \to V$ is given by
\[
\Phi_t(v,w)= (1-t)v + tw.
\]
Subsets of $V$ closed under this binary operation for each $t \in I$ are the convex sets. As is apparent all the above-mentioned binary operations arise out of 
the basic pair of algebraic/arithmetic operations on the field of reals which themselves as such, keep playing a fundamental role. We pause for a moment to note that with the standard multiplication, $I$ is a monoid whose action on $V$ by $v \to tv$, is also of basic importance. Subsets 
of $V$ containing the origin and closed under this action, are the sets star-like with respect to the origin. 
Getting back from the digression now to the other binary operation, we first make some identification of $V$ with $\mathbb{R}^N$ for $N={\rm dim}(V)$; it is best defined first in the connected component of the identity of the multiplicative Lie group $(\mathbb{R}^*)^N$, namely $(\mathbb{R}^+)^N$, as:  
\[
\Psi_t(v,w) =  \Big(  \psi_t(v_1,w_1), \ldots, \psi_t(v_N,w_N)\Big),
\]
for $v,w \in (\mathbb{R}^+)^N$ with $\psi_t(a,b) = a^{1-t}b^t$. The former operation is facilitated by the scalar multiplication of $\mathbb{R}$ on $V$ (henceforth identified with $\mathbb{R}^N$) and the latter
\footnote{This binary operation which consists of forming the coordinate-wise geometric mean of the given pair of points, may be extended to all other cosets of $(\mathbb{R}^+)^N$ in $(\mathbb{C}^*)^N$ by taking coordinate-wise product with the map which sends a complex number $z$ to  $z/\vert z \vert$, as:
\[
\Psi_t(v,w) = \Big(  \vert v_1 \vert^{1-t} \vert w_1 \vert^t \frac{v_1w_1}{\vert v_1 w_1 \vert},\ldots, \vert v_N \vert^{1-t} \vert w_N \vert^t \frac{v_Nw_N}{\vert v_N w_N \vert}\Big)
\]
But we shall not pursue this here. We are more interested in sets closed under these binary operations -- which admit alternative definitions -- rather than the operations themselves.}
by its conjugate namely, the conjugate of scalar multiplication by the exponential/logarithm: 
\[
v \to \lambda^{-1}\big(t\lambda(v)\big)
\]
where $\lambda(v) = (\log v_1, \ldots, \log v_N)$. This logarithmic mapping $\lambda$ has an obvious extension: 
$ v \to  (\log \vert v_1 \vert, \ldots, \log \vert v_N \vert)$ as a surjective group homomorphism $(\mathbb{C}^*)^N \to \mathbb{R}^N$ whose kernel is
the torus $\mathbb{T}^N$. This map which we continue to denote by $\lambda$, may be further viewed to extend as a monoid morphism from the 
 multiplicative monoid \footnote{The multiplicative monoid structure on $\mathbb{C}^N$ is used in the operation $\Psi_t$ which plays a central role in this article:
$\Psi_t(v,w) = p\big( \lambda^{-1}\big(t\lambda(v)\big), \lambda^{-1}\big((1-t)\lambda(w)\big) \big)$ where $p(v,w) = (v_1w_1, \ldots, v_Nw_N)$ denotes the monoidal operation of coordinate-wise product. We remark in passing that the map $\lambda$ whose components may be thought of as $\Re\circ \log$ (for a suitable local branch of the complex logarithm) applied to the respective coordinates, is continuous, infact smooth and (pluri-)harmonic, on all of $(\mathbb{C}^*)^N$ even though the complex logarithm fails to be continuous on $\mathbb{C}^*$; if we factor out $\tau$ from $\lambda$, it is a local diffeomorphism, in particular, an open mapping. These facts are convenient in assuring ourselves, while imaging Reinhardt domains in the logarithmic space as {\it domains}. Finally, let us mention that its extension to $\mathbb{C}^N$ is upper semi-continuous; indeed $z \to \log \vert z \vert$ furnishes the simplest upper semicontinuous subharmonic function whose polar set is non-empty.} 
 $\mathbb{C}^N$ onto the additive monoid $[-\infty, \infty)^N$; this actually factors through the monoid morphism 
 $\tau: (z_1, \ldots,z_N)\to (\vert z_1 \vert, \ldots, \vert z_N \vert) $ mapping  $\mathbb{C}^N$ onto the absolute space $(\mathbb{R}_+)^N$. The product\footnote{Direct product of the additive monoid $[-\infty, \infty)^N$ with the multiplicative group $\mathbb{T}^N$ can be identified -- via 
the mapping $A:(x,\omega) \to (e^{x_1}\omega_1, \ldots, e^{x_N} \omega_N)$ --  with $\mathbb{C}^N$ which is an additive group as well as a multiplicative monoid.} of $[-\infty, \infty)^N$ with $\mathbb{T}^N$ can be identified via 
the mapping $A:(x,\omega) \to (e^{x_1}\omega_1, \ldots, e^{x_N} \omega_N)$ (here ofcourse it is understood that $e^{-\infty}=0$) with $\mathbb{C}^N$. Products of domains in $[-\infty, \infty)^N$ with $\mathbb{T}^N$ are pushed forward by this mapping $A$ onto domains which are `multi-circular'  (invariant under the natural action of $\mathbb{T}^N$ on $\mathbb{C}^N$) and are termed Reinhardt domains \footnote{It is helpful to draw (for $N\leq 3$) images of Reinhardt domains in the absolute space as well as in the corresponding logarithmic space and we urge the reader to do so.}. Pull-backs of convex domains in $\mathbb{R}^N$ by $\lambda$ are called logarithmically convex -- formulated again precisely in definition (\ref{logcvxdefn}) below. So, sets closed under $\Psi_t$ are those whose logarithmic images are closed under the former binary operation $\Phi_t$.
As $\Phi_t$ requires no coordinatization, it is trivial that sets closed under this binary operation for all $t \in I$ namely the convex sets, remain convex under all affine transformations -- convexity is an affine property. However, it is far more non-trivial that multi-circular logarithmically convex domains in $\mathbb{C}^N$ whose logarithmic images are complete/closed under translation by vectors from $(-\mathbb{R}^+)^N$, possess a property which remains invariant under all biholomorphic (not just affine!) transformations. This property known as pseudoconvexity will not be discussed much here (we refer the novice to Range's expository articles \cite{R1} and \cite{R2}). Pseudoconvexity is a subtle property; however, we hope that the present essay, among other extensive treatises such as \cite{JarPflu}, convinces the reader that it is possible to gain a `hands-on' experience with the simplest examples of `pseudoconvex' domains namely, domains of convergence of power series in several complex variables.\\

\noindent Among the most elementary functions of several complex variables are the monomial functions and their linear combinations. 
\begin{defn}
A function of the form $p(z) = \sum_{\vert J \vert\leq m} c_Jz^J$ is called a polynomial. Here, if at least one of the $c_J$'s with $\vert J \vert = m$ is non-zero, the total degree of $p$ is defined to be ${\rm deg}(p)=m$. For the zero polynomial, the degree is not defined. A polynomial is called homogeneous (of degree $m$) if the coefficients $c_J$ for $\vert J \vert <m$ are all zero. Equivalently, a polynomial $p$ of degree $m$ is homogeneous if and only if $p(\lambda z) = \lambda^m p(z)$ for all $\lambda \in \mathbb{C}$.
\end{defn}
\noindent Thus polynomials are for us by definition, functions on the coordinate space $\mathbb{C}^N$, defined by expressions from the 
(coordinate) ring $\mathbb{C}[z] = \mathbb{C}[z_1, \ldots,z_N]$. Such functions are annihilated by the operators $\partial/\partial \overline{z}_j$ for all $j=1,\ldots,N$
and are sometimes referred to as `holomorphic polynomials' to distinguish them from finite linear combinations of monomials in the $2N$ many `independent' variables $(z,\overline{z}) = (z_1, \ldots,z_N, \overline{z}_1, \ldots, \overline{z}_N)$. A basic question arising here is of the `independence' of $z$ from $\overline{z}$ which is addressed in basic complex analysis; for an advanced, enlightening treatment we refer the reader to \cite{dAn}. We shall not 
dwell anymore on this than   saying  that  each 
$z_j$ is annihilated by the operator $\partial/\partial \overline{z}_j$ for instance, where we request the reader to recall the notion of Wirtinger derivatives here: for example $\partial/\partial \overline{z}_j$ is the complex linear combination 
 of the standard partial differential operators 
$\partial/\partial x_j,\partial/\partial y_j$ given by $1/2 \; (\partial/\partial x_j + i \partial/\partial y_j)$
where $x_j = \Re z_j$, $y_j=\Im z_j$. Moving further, we may obtain more functions by taking limits of polynomials; but such limits will often not be well-defined on all of $\mathbb{C}^N$ and we need to identify the subset on which they exist. Before we investigate this, we must first be clear about issues of limits and convergence in several variables, which we review in the following sub-section.
\subsection{Series indexed by Lattices}
\noindent Suppose that for each $J \in \mathbb{N}_0^N$, a complex number $c_J$ is given; we may form the series $\sum c_J$ and discuss the matter of its convergence. A trouble immediately arising is: there is no canonical order on $\mathbb{N}_0^N$. So to start with, we make the following

\begin{defn} 
The series of complex numbers $\sum c_J$ indexed by $J\in \mathbb{N}_0^N$ is said to be convergent, if there exists at least one bijection 
$\phi: \mathbb{N} \to \mathbb{N}_0^N$ such that $\sum_{i=1}^{\infty} \vert c_{\phi(i)} \vert < \infty$. Then the number
\[
\sum_{i=1}^{\infty}  c_{\phi(i)} 
\]
is called the limit of the series. Now note that this notion of convergence is independent of the choice of the map $\phi$ and that it means absolute convergence, thus circumventing the ambiguities alluded to above; all possible rearranged-summing leads to the same sum.
\end{defn}

\begin{ex}[The geometric series of several variables.] 
\noindent  Let $r=(r_1, \ldots, r_N) \in \mathbb{R}_+^N$ with $r_i \in (0,1)$ for all $i=1, \ldots,N$. Then the number $r^J= r_1^{j_1} r_2^{j_2} \ldots r_N^{j_N}$ is again in $(0,1)$. If $I$ is a finite sub-lattice of $\mathbb{N}_0^N$, there is an integer $L$ such that $I \subset \{0,1,2, \ldots, L\}^L$ so that we may write
\[
\vert \sum\limits_{J \in I} r^J \vert = \sum_{J \in I} r^J \leq \prod\limits_{i=1}^N \sum\limits_{k_i=0}^L r_i^{k_i} \leq 
\prod\limits_{i=1}^N\Big( \frac{1}{1-r_i} \Big) < \infty
\]
and conclude that the series is convergent. Replacing $r$ by $z\in \Delta^N$ shows likewise that the partial sums of the multi-variable geometric series 
$\sum_J z^J$ is also convergent on $\Delta^N$ -- indeed, absolutely convergent -- with sum being given by 
$\prod\limits_{i=1}^N\big( 1/(1-z_i) \big)$.
\end{ex}

\subsection{Convergence of functions.} Let $M$ be an arbitrary subset of $\mathbb{C}^N$, $\{f_J \; : \; J \in \mathbb{N}_0^N\}$ a family of complex-valued functions on $M$. Denote by $\vert f_J \vert_M$ the supremum of $\vert f_ J \vert$ on $M$.
\begin{defn}
The series $\sum_J f_J$ is said to {\it normally convergent} on $M$ if the series of positive numbers $\sum \vert f_J \vert_M$ is convergent.
\end{defn}

\begin{prop}
Suppose the series $\sum f_J$ is normally convergent on $M$. Then it is convergent for any $z \in M$ and for any bijective map $\phi: \mathbb{N}_0 \to \mathbb{N}_0^N$, the series $\sum_{i=1}^{\infty} f_{\phi(i)}$ is uniformly convergent on $M$.
\end{prop}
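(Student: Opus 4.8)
The plan is to run the several-variables analogue of the Weierstrass $M$-test, relying only on the two definitions just recorded in this subsection together with the order-independence of convergence for a series of non-negative reals. First I would observe that by hypothesis $\sum_J \vert f_J\vert_M$ is a convergent series of non-negative real numbers; by the remark incorporated into the first definition of this subsection, such convergence is absolute and hence unaffected by rearrangement, so for every bijection $\phi:\mathbb{N}_0\to\mathbb{N}_0^N$ one has $\sum_i \vert f_{\phi(i)}\vert_M<\infty$, with the value of the sum the same for all $\phi$.

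Next, fixing $z\in M$, I would invoke the trivial pointwise bound $\vert f_J(z)\vert\le\vert f_J\vert_M$, valid for every $J$. Summing along any bijection $\phi$ gives $\sum_i \vert f_{\phi(i)}(z)\vert\le\sum_i\vert f_{\phi(i)}\vert_M<\infty$, which is precisely the condition in the definition of convergence of an $\mathbb{N}_0^N$-indexed series; hence $\sum_J f_J(z)$ converges, and its limit $S(z):=\sum_{i=1}^{\infty} f_{\phi(i)}(z)$ is, again by that definition, independent of the chosen $\phi$.

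Finally, for the uniform convergence of the partial sums $S_n(z):=\sum_{i=1}^{n} f_{\phi(i)}(z)$, I would estimate a tail: for $m>n$,
\[
\vert S_m(z)-S_n(z)\vert\le \sum_{i=n+1}^{m}\vert f_{\phi(i)}(z)\vert\le \sum_{i=n+1}^{m}\vert f_{\phi(i)}\vert_M\le \varepsilon_n:=\sum_{i=n+1}^{\infty}\vert f_{\phi(i)}\vert_M.
\]
Letting $m\to\infty$ yields $\vert S(z)-S_n(z)\vert\le\varepsilon_n$ for every $z\in M$, so $\vert S-S_n\vert_M\le\varepsilon_n$; and since $\sum_i\vert f_{\phi(i)}\vert_M$ converges, its tail $\varepsilon_n$ tends to $0$ as $n\to\infty$ independently of $z$, which is exactly uniform convergence on $M$.

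I do not anticipate a genuine obstacle here; the only point demanding a little care is the bookkeeping between the unordered notion of convergence of the $\mathbb{N}_0^N$-indexed family and the ordered series $\sum_i f_{\phi(i)}$ obtained after choosing a bijection. One must lean on the rearrangement-invariance built into the first definition both to know that the limit function $S$ is genuinely well defined and to ensure that the tail bound $\varepsilon_n\to 0$ is insensitive to which $\phi$ was used.
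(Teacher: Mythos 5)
Your argument is correct: it is the standard Weierstrass $M$-test, combined with the rearrangement-invariance built into the paper's definition of convergence for $\mathbb{N}_0^N$-indexed series, and the tail estimate $\vert S - S_n\vert_M \le \varepsilon_n \to 0$ cleanly delivers the uniformity. The paper states this proposition without proof, and what you have written is exactly the argument it implicitly relies on; there is nothing to add.
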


\subsection*{Set theoretic operations.} A possibly not-so-often encountered operation shall arise naturally in the sequel, namely that of the limit infimum of a countable collection of sets, enumerated as say $\{C_n\}_{n=1}^{\infty}$; their limit infimum is given by
\[
\liminf\limits_{n \in \mathbb{N}} C_n = \bigcup\limits_{k=1}^{\infty} \bigcap\limits_{j=k}^{\infty} C_j. 
\]
Thus, $\omega \in \liminf_n C_n$ if and only if for some $n$, $\omega \in C_j$ for all $j \geq n$; in other words, $\omega \in \liminf_n C_n$
if and only if $\omega \in C_n$ eventually.
A trivial fact that will be useful to keep in mind for the sequel is that the limit infimum of a countable collection of {\it convex} sets in $\mathbb{R}^N$ is convex. Rudiments of convex analysis are reviewed in the last section which may be useful as a reference for our notational practices as well. Indeed it will do well to keep the basics of convex calculus afresh in mind and the basics for the present essay are summarized in the last section. 

\subsection{Recap of Convex Analysis and Geometry}
\noindent We shall relegate the recollection of fundamentals of convex analysis to the last section, except for the notion of {\it support function} which is so central to the sequel that we recall it here right away.
\begin{defn} \label{suppfn}
Let $C \subset \mathbb{R}^N$ be a closed convex set. The {\it support function} $h=h_C: \mathbb{R}^N \to (-\infty, +\infty]$ of $C$ is defined by
\[
h(u) = \sup \{ \langle x, u \rangle \; : \; x \in C \}.
\]
The set of all $u \in \mathbb{R}^N$, for which $h(u)$ is finite is called the {\it effective} domain of $h$ and we call its subset consisting unit vectors
thereof, as the {\it normalized effective} domain of $h$.
\end{defn}
\noindent For more, refer to the last section. 

\begin{rem}
A final remark about notations: an ambiguous notation to be used is the indexing of sequences of reals say, as $\{c^n\}$ rather than by a subscript, which may cause confusion with the notation of the $n$-th power of a number $c$. Such a notation will be employed only in connection with other objects; for instance, the first components of a vector sequence $v^n = (v^n_1, \ldots, v^n_N) \in \mathbb{R}^N$ is naturally denoted $v^n_1$. We hope such ambiguous notations will be clear from context.
\end{rem} 

{\it Acknowledgments:} The author would like to thank Kaushal Verma, Sivaguru Ravisankar and Harold Boas for suggesting improvements. 

\section{Power series in several variables.}
\begin{defn}
Let $c_J$ be a sequence of complex numbers indexed by $J \in \mathbb{N}_0^N$ and $z_0 \in \mathbb{C}^N$. Then the expression $\sum c_J (z-z_0)^J$ is called a formal power series about $z_0$. Without loss of generality, we shall assume henceforth that $z_0$ is the origin. If this series converges normally on a set $M$ to a complex-valued function $f$ then being a uniform limit of continuous functions, we first note that $f$ defines a continuous function on $M$.
\end{defn}
\begin{defn}
Let $f(z) = \sum c_J z^J$ be a formal power series. Denote by $B$ the set of all points of $\mathbb{C}^N$ at which the series $S$ converges; it's interior $B^0$ is termed the `domain' of convergence of the power series $S$. 
\end{defn}

\begin{rem}
\noindent 
\begin{enumerate}
\item[(i)] \noindent There is a canonical way to sum a power series of several variables, even though the indexing set in the summation is 
$\mathbb{N}_0^N$. Namely, one first sums up all monomials of any given degree and then sums up the homogeneous polynomials of various degrees thus obtained:
\[
\sum\limits_{k=1}^{\infty} \sum_{\vert J \vert=k} c_J z^J
\]
If we declare a power series to be convergent if the sum of its homogeneous constituents ordered by degree as above converges, instead of the (tacit)
requirement made above that every rearrangement of the constituting monomials of a power series must lead to a convergent series with the same sum, then the domain of convergence gets enlarged. As a power series is thought of more as a sum of the monomials constituting/occurring in it, this practice of summing by homogeneous components alone, is not adopted. It is even customary to write a power series as a sum of monomials arranged in non-decreasing order of their degree  (i.e., with respect to the partial order on $\mathbb{N}_0^N$ by $l^1$-norm) though our requirement places no emphasis on such an 
ordering.
\item[(ii)] \noindent We shall refer to both the formal power series and the (holomorphic) function it defines, by the same symbol.
\item[(iii)] \noindent  The quotes on the word `domain' in the above definition, can and will be dropped as soon as we verify that the $B^0$ is connected. This requires the following lemma.
\end{enumerate}
\end{rem}

\begin{lem} [Abel's lemma] Let $P' \Subset P$ be polydiscs about the origin i.e., $(P,P')$ is a pair of concentric (open) polydiscs with the closure of
 $P'$ being contained inside $P$. If the power series $\sum c_J z^J$ converges at some point of the distinguished boundary of $P$, then it converges normally on $P'$.\\
\noindent Here, the distinguished boundary of $P$ is the thin subset $\partial\Delta_{1} \times \partial\Delta_{2}\times  \ldots \times \partial\Delta_{N}$, of the boundary of $P$ if $P = \Delta_{1} \times \Delta_{2}\times \ldots \times \Delta_{N}$ where $\Delta_j$ are discs of some radii about the origin.
\end{lem}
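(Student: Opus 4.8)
The plan is to exploit the single point of convergence on the distinguished boundary to bound all coefficients, and then compare the power series against a geometric series of the sort already treated in the worked example. Write $P = \Delta_{1} \times \cdots \times \Delta_{N}$ with $\Delta_j$ the disc of radius $R_j$ about the origin, and let $w = (w_1, \ldots, w_N)$ be the point of the distinguished boundary at which $\sum c_J z^J$ converges, so $\vert w_j \vert = R_j$ for each $j$. Since the series $\sum c_J w^J$ converges (in the sense of our definition, which is absolute convergence), in particular the terms are bounded: there is a constant $M > 0$ with $\vert c_J w^J \vert \leq M$, i.e. $\vert c_J \vert R^J \leq M$ for all $J \in \mathbb{N}_0^N$, where $R = (R_1, \ldots, R_N)$.

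**Next I would** bring in the inner polydisc $P'$. Since $\overline{P'} \subset P$, after shrinking we may take $P' = \Delta(0, R_1') \times \cdots \times \Delta(0, R_N')$ with $R_j' < R_j$ for every $j$; set $\theta_j = R_j'/R_j \in (0,1)$ and $\theta = (\theta_1, \ldots, \theta_N)$. Then for any $z \in \overline{P'}$ we have $\vert z^J \vert \leq (R')^J = \theta^J R^J$, hence
\[
\vert c_J z^J \vert \;\leq\; \vert c_J \vert R^J \, \theta^J \;\leq\; M\, \theta^J.
\]
Therefore $\vert c_J z^J \vert_{P'} \leq M \theta^J$, and so $\sum_J \vert c_J z^J \vert_{P'} \leq M \sum_J \theta^J$. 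The right-hand side is exactly the multivariable geometric series with ratios $\theta_j \in (0,1)$, which was shown in the example above to converge (with sum $M \prod_{j=1}^N (1-\theta_j)^{-1}$). This establishes normal convergence of $\sum c_J z^J$ on $P'$, which is the assertion.

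**The only real subtlety** — and it is minor — is making sure the reduction of the hypothesis to a uniform bound on $\vert c_J \vert R^J$ is legitimate: this is where the chosen notion of convergence (absolute convergence under every rearrangement, hence boundedness of the individual terms) is used, rather than convergence of some conditionally ordered sum. A second small point is that $P' \Subset P$ lets us choose the $\theta_j$ strictly less than $1$ simultaneously in all coordinates; this is immediate from compactness of $\overline{P'}$, but it is worth stating so the geometric comparison applies. Everything else is a direct estimate, and no connectedness or convexity input is needed here — that will be harvested afterward to justify dropping the quotes around `domain'.
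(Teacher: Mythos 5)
Your proof is correct and follows essentially the same route as the paper's: bound the terms $\vert c_J w^J\vert$ using convergence at the distinguished-boundary point, introduce the coordinate ratios $\theta_j = R_j'/R_j = q_j < 1$, and compare with the multivariable geometric series. The only cosmetic difference is that you phrase the ratios via the radii rather than via $\vert z_j\vert/\vert w_j\vert$ directly, which is the same estimate.
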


\begin{proof}
Let $\partial_0 P$ denote the distinguished boundary of $P$. Let $w \in \partial_0 P$ be such that $\sum c_J w^J$ is convergent. Then firstly, there exists a constant $C>0$ such that $\vert c_Jw^J \vert \leq C$ for all $J \in \mathbb{N}_0^N$. Next, compare the modulii
of the coordinates of points in $P$ with that of $w$ i.e., consider the ratios $r_j(z) = \vert z_j \vert/\vert w_j \vert$ for $j=1,2, \ldots,N$ -- each of these ratios $r_j(z)$ is bounded above by a positive constant say $q_j$, strictly less than $1$, owing to $P'$ being compactly contained inside $P$.  
Note that the sup-norm of the monomial-function $c_J z^J$ on $P'$ is bounded above by the constant $Cq^J$:
\[
\vert c_J z^J \vert \leq \vert c_J q^Jw^J \vert \leq C q^J
\]
for every $z \in P'$. This comparison with the geometric series $\sum q^J$ -- which converges because we know $q_j$ are all strictly less than $1$ -- finishes the verification that $\sum \vert c_J z^J \vert_{P'}$ is convergent and subsequently that $\sum_J c_J z^J$ is normally convergent. Finally, since every compact subset of $P$ is contained in some compact sub-polydisc $P'$ of $P$, we see that our power series converges uniformly on each compact subset 
of $P$.
\end{proof}
\noindent We leave the following characterizing test to determine whether or not a point 
belongs to the interior of the set of convergence of a given power series, as an exercise. 
\begin{prop} \label{charactest}
A point $p$ belongs to the domain of convergence of a power series $\sum c_J z^J$ if and only if there exits a neighbourhood $U$ of $p$ and 
positive constants $M$ and $r<1$ such that
\[
\vert c_J z_1^{j_1} z_2^{j_2} \ldots z_N^{j_N} \vert \leq M r^{j_1 + \ldots +j_N}
\]
for all $J=(j_1, \ldots, j_N)\in \mathbb{N}^N$ and $z \in U$.
\end{prop}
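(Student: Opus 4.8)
\noindent The plan is to establish the two implications separately; sufficiency of the estimate is routine, while the substance lies in extracting it from membership in the domain of convergence.

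For the ``if'' direction, shrink the given neighbourhood so that we may assume $U$ is open. The hypothesis yields $\sum_J \vert c_J z^J\vert_U \le M\sum_J r^{\vert J\vert}$, and since the number of multi-indices $J$ with $\vert J\vert = k$ equals $\binom{k+N-1}{N-1}$, a polynomial in $k$ of degree $N-1$, the series $\sum_J r^{\vert J\vert} = \sum_{k\ge 0}\binom{k+N-1}{N-1}r^k$ converges for $r<1$ (ratio test). Hence $\sum c_J z^J$ is normally convergent on the open set $U$, so every point of $U$ lies in $B$, and openness of $U$ forces $U\subseteq B^0$; in particular $p\in B^0$.

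For the ``only if'' direction, suppose $p\in B^0$. Since $B^0$ is open there is $\delta>0$ with $\{z:\vert z_j-p_j\vert<\delta \text{ for all }j\}\subseteq B^0\subseteq B$. I would then manufacture a point $w$ lying in this polydisc, strictly dominating $p$ coordinatewise in modulus, and having no vanishing coordinate: set $w_j = p_j\bigl(1+\tfrac{\delta}{2\vert p_j\vert}\bigr)$ when $p_j\ne 0$ and $w_j = \tfrac{\delta}{2}$ when $p_j = 0$, so that $\vert w_j\vert = \vert p_j\vert + \tfrac{\delta}{2}$ and $\vert w_j-p_j\vert = \tfrac{\delta}{2}<\delta$. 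Then $\sum c_J w^J$ converges, so its terms are uniformly bounded, say $\vert c_J w^J\vert\le C$ for all $J$; and since each $w_j\ne 0$, the point $w$ lies on the distinguished boundary of the polydisc $P$ centred at the origin with polyradius $(\vert w_1\vert,\dots,\vert w_N\vert)$.

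Finally I would invoke Abel's lemma. Since $\vert p_j\vert<\vert w_j\vert$ for every $j$, the number $q:=\max_j\vert p_j\vert/\vert w_j\vert$ satisfies $q<1$; fix $\rho$ with $q<\rho<1$ and let $P'$ be the origin-centred polydisc of polyradius $(\rho\vert w_1\vert,\dots,\rho\vert w_N\vert)$, so that $P'\Subset P$ and $p$ is interior to $P'$. Re-running the estimate in the proof of Abel's lemma with the common ratio $\rho$ in each coordinate then gives $\vert c_J z^J\vert\le C\rho^{\vert J\vert}$ for all $J$ and all $z\in P'$, and choosing $U$ to be any open polydisc about $p$ inside $P'$ together with $M=C$, $r=\rho$ completes the proof. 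The one step requiring care, and the main obstacle, is the construction of $w$: it must simultaneously stay inside the known polydisc of convergence about $p$, dominate $p$ in every modulus, and avoid every coordinate hyperplane, the last being exactly what lets us apply Abel's lemma, whose statement concerns the distinguished boundary of a genuinely non-degenerate polydisc; coordinates of $p$ that happen to vanish are what force the two-case definition of $w$.
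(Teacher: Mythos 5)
Your proof is correct; the paper leaves this proposition as an exercise immediately after Abel's lemma, and your argument --- bounding the terms at a coordinatewise-nonvanishing point $w$ strictly dominating $p$ in modulus and then comparing with a geometric series, exactly as in the proof of Abel's lemma --- is the intended one. The two-case construction of $w$ correctly handles vanishing coordinates of $p$, and both directions are complete.
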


\begin{defn}
We say that a power series $\sum_J c_J z^J$ converges compactly in a domain $D$, if it converges normally on every compact subset of $D$.
\end{defn}

\begin{lem} \label{Cauchyest}
Let $P' \Subset P$ be a pair of polydiscs about the origin. Suppose $f(z) = \sum c_J z^J$ converges compactly on the polydisc $P$ and the multi-radius of $P'$ is $r=(r_1, \ldots, r_N)$. Then the coefficients of the power series defining the function $f$, can be recovered from the knowledge of the values of $f$ on the distinguished boundary 
of $P'$ by the formula:
\[
c_K = \frac{1}{(2\pi)^N r^K} \int_{[0,2 \pi]^N} f(z) e^{-i(k_1 \theta_1 + \ldots +k_N \theta_N)} d \theta_1 \ldots d \theta_N
\]
and consequently, we have the estimate
\[
\vert c_K \vert \leq \frac{1}{(2 \pi)^N}\frac{\vert f \vert_{\mathbb{T}}}{r^K}
\]
\end{lem}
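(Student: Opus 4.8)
The plan is to reduce the coefficient formula to the elementary orthogonality relation $\int_0^{2\pi} e^{i(j-k)\theta}\,d\theta = 2\pi$ if $j=k$ and $0$ otherwise, applied in each of the $N$ angular variables. First I would parametrize the distinguished boundary $\mathbb{T}$ of $P'$ by $z = (r_1 e^{i\theta_1}, \ldots, r_N e^{i\theta_N})$ with $(\theta_1, \ldots, \theta_N) \in [0,2\pi]^N$, so that on $\mathbb{T}$ one has $z^J = r^J e^{i(j_1\theta_1 + \cdots + j_N\theta_N)}$. Since $\overline{P'} \Subset P$, the torus $\mathbb{T}$ is a compact subset of $P$, so the hypothesis of compact convergence gives directly that $\sum_J c_J z^J$ converges normally, hence uniformly, on $\mathbb{T}$ to $f$ (this is also a consequence of Abel's lemma).

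Next I would substitute $f(z) = \sum_J c_J z^J$ into the integral on the right-hand side and use the uniform convergence on the compact set $[0,2\pi]^N$ to interchange the (absolutely convergent) summation with the integration. For each fixed multi-index $J$, Fubini's theorem factors the resulting integral as
\[
c_J\, r^J \prod_{m=1}^{N} \int_0^{2\pi} e^{i(j_m - k_m)\theta_m}\, d\theta_m ,
\]
and by the orthogonality relation each factor equals $2\pi$ when $j_m = k_m$ and $0$ otherwise; hence every term with $J \neq K$ vanishes and the sum collapses to $(2\pi)^N r^K c_K$. Dividing through by $(2\pi)^N r^K$ yields the asserted formula for $c_K$.

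The Cauchy estimate is then immediate: since $\vert e^{-i(k_1\theta_1 + \cdots + k_N\theta_N)}\vert = 1$ and $\vert f(z)\vert \leq \vert f\vert_{\mathbb{T}}$ for $z \in \mathbb{T}$, bounding the modulus of the integral by $\vert f\vert_{\mathbb{T}}$ times the measure of $[0,2\pi]^N$ gives the displayed bound on $\vert c_K\vert$. I do not expect any genuine obstacle here: the only step that is not a one-line computation is the term-by-term integration of the series, and that is handled by the uniform convergence of the power series on the compact torus $\mathbb{T} \subset P$ noted above; everything else is the standard Fourier-coefficient calculation.
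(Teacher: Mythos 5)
Your proof is correct and essentially identical to the paper's: parametrize the distinguished boundary by $z_j = r_j e^{i\theta_j}$, integrate the series term by term (justified by uniform convergence on the compact torus), and invoke the orthogonality of the exponentials so that only the $J=K$ term survives. One small caveat: carrying out your own estimate honestly yields $\vert c_K \vert \leq \vert f \vert_{\mathbb{T}}/r^K$ rather than the displayed inequality, whose extra factor $1/(2\pi)^N$ is a typo in the statement (it would already fail for $f \equiv 1$ and $K=0$).
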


\begin{proof}
Set $z_j = r_j e^{i \theta_j}$ for each $j=1, \ldots, N$
to write
\[
f(z) = f(r_1e^{i \theta_1}, \ldots, r_N e^{i \theta_N}) = \sum c_J r^J e^{i(j_1 \theta_1 + \ldots +j_N \theta_N)}
\]
and integrate with respect to each of the variables $\theta_j$ on $[0, 2 \pi]$ to get
\begin{multline*}
\int_{[0,2 \pi]^N} f(z) e^{-i(k_1 \theta_1 + \ldots +k_N \theta_N)} d \theta_1 \ldots d \theta_N \\
= \sum c_J r^J \int_{[0,2 \pi]^N} e^{i(j_1 - k_1) \theta_1 + \ldots + (j_N - k_N)\theta_N} d \theta_1 \ldots d \theta_N .
\end{multline*}
where the interchange of integral and summation on the right is justified by the uniform convergence of our power series on the boundary of $P'$.
The integral appearing on the right in the last equation is zero except when $J=K$ in which case it is $(2 \pi)^N$. The formulae in assertion now follow.
\end{proof}

\begin{defn}
Let $z^0$ be any point of $(\mathbb{C}^*)^N$. The (open) polydisc centered at the origin with polyradius 
$(\vert z_1^0 \vert, \vert z_2^0 \vert, \ldots,\vert z_N^0 \vert)$ is called the polydisc spanned by the point $z^0$.
\end{defn}

\noindent We may rephrase Abel's lemma as follows. Let $P$ be a polydisc and $w$ a point of the distinguished boundary of $P$. If the power series 
$f(z)= \sum c_J z^J$ converges (unconditionally) at $w$, then  it converges compactly on $P$. Stated differently, if $f$ converges at a point $w$, then it converges compactly on the polydisc spanned by $w$. This means that the interior of the set of convergence of the general power series $f$ which we denoted $B^0$, can be expressed as the union of the (concentric) polydiscs spanned by points of $B$ and subsequently that $B^0$ must be connected. This finishes the pending verification that $B^0$ is indeed a domain. In fact, we may note more here: $B^0$ is what is known as a Reinhardt domain, indeed a `complete Reinhardt domain' as defined below and in particular therefore, a contractible domain.

\begin{defn}
A domain $D$ in $\mathbb{C}^N$ is termed Reinhardt (about the origin) if $ z \in D$ entails that $(e^{i \theta_1}z_1, \ldots, e^{i \theta_N}z_N) \in D$ for all possible choices of $(\theta_1,\ldots, \theta_N) \in \mathbb{R}^N$. Such a domain is also said to be {\it multi-circular}. A domain $D$ 
in $\mathbb{C}^N$ is said to be circular if $ z \in D$ entails (only) that $(e^{i \theta}z_1, \ldots, e^{i \theta}z_N) \in D$ for all $\theta \in \mathbb{R}$; it is said to be complete circular if it admits an action by the disc i.e., $ z \in D$ entails that $(\lambda z_1, \ldots, \lambda z_N) \in D$ for all $\lambda \in \overline{\Delta}$; complete circular domains are sometimes also referred to as complex star-like domains and we note in passing that being star-like with respect to the origin, all complete circular domains are contractible domains.
Likewise a Reinhardt domain is said to be {\it complete} if it is invariant under the action of the closed unit poydisc by coordinate-wise multiplication i.e., $z \in D$ entails that 
$(\lambda_1 z_1, \ldots, \lambda_N z_N) \in D$ for all choices of $(\lambda_1, \ldots,\lambda_N) \in \overline{\Delta}^N$.
\end{defn}

\begin{prop}
The domain of convergence $B^0$ of $f(z)$ is a complete Reinhardt domain and $f(z)$ converges compactly in $B^0$.
\end{prop}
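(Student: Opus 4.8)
The plan is to assemble the statement from ingredients already at hand, so that almost nothing new needs to be proved. First I would recall the rephrased form of Abel's lemma established just above: if $f(z)=\sum c_J z^J$ converges at a point $w$, then it converges compactly on the polydisc $P(0,\rho)$ spanned by $w$, where $\rho=(\vert w_1\vert,\ldots,\vert w_N\vert)$. Since that polydisc is itself a complete Reinhardt domain and contains every point $\zeta$ with $\vert \zeta_j\vert\le\vert w_j\vert$ for all $j$, this already shows that $B$ (hence $B^0$) is invariant under coordinate-wise multiplication by $\overline\Delta^N$: if $w\in B$ and $\lambda\in\overline\Delta^N$, then $(\lambda_1w_1,\ldots,\lambda_Nw_N)$ lies in the closed polydisc spanned by $w$, on which $f$ converges. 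In particular $B^0$ is multi-circular and complete, i.e.\ a complete Reinhardt domain; the fact that it is a genuine domain (open and connected) was already noted, being the union of the concentric open polydiscs spanned by points of $B$.

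For compact convergence of $f$ on $B^0$, I would argue that every compact $K\Subset B^0$ is contained in some polydisc $P'$ with $\overline{P'}$ a compact subset of a polydisc $P$ spanned by a point $w\in B$. Concretely, since $B^0=\bigcup_{w\in B}P(0,\rho(w))$ is an open cover of $K$ by the (nested, directed) family of spanned polydiscs, compactness yields finitely many such polydiscs covering $K$; taking the one of largest polyradius in each coordinate — or simply noting that for complete Reinhardt domains the spanned polydiscs form a directed family under inclusion — gives a single $P=P(0,\rho(w))$, $w\in B$, with $K\subset P$. Then choose $P'$ a slightly larger sub-polydisc with $K\subset P'\Subset P$; Abel's lemma (in its original form, since $\sum c_Jw^J$ converges at the point $w$ of the distinguished boundary of $P$) gives normal convergence of $\sum c_Jz^J$ on $P'$, hence on $K$. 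As $K$ was arbitrary, $f$ converges compactly on $B^0$.

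I expect the only genuinely delicate point to be the passage from "finitely many spanned polydiscs cover $K$" to "a single spanned polydisc contains $K$". This is where completeness of the Reinhardt domain is used essentially: given $w^{(1)},\ldots,w^{(m)}\in B$, the coordinate-wise maximum $\rho^*=(\max_i\vert w^{(i)}_1\vert,\ldots,\max_i\vert w^{(i)}_N\vert)$ need not a priori correspond to a point of $B$, so one cannot just "take the max". The clean fix is to observe that each $w^{(i)}\in B^0$ (not merely $B$), so there is $\varepsilon_i>0$ with $(1+\varepsilon_i)w^{(i)}\in B$; shrinking and using that $B^0$ is open one may instead, for the given compact $K$, first fatten $K$ to a compact polydisc-shaped neighbourhood and cover \emph{that}, or invoke the elementary fact that a compact subset of a union of open polydiscs about the origin lies in one of them once we pass to the spanned (closed) polydiscs of interior points. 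Either way the argument is routine convex/topological bookkeeping and carries no analytic content beyond Abel's lemma. Finally, I would remark that completeness also makes $B^0$ star-like with respect to the origin, hence contractible, which records the parenthetical claim made before the statement.
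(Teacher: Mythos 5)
Your first paragraph is sound and coincides with the paper's (implicit) argument: by the absolute‑convergence definition of convergence, $B$ is stable under coordinate‑wise multiplication by $\overline{\Delta}^N$, and $B^0$ is the union of the open polydiscs spanned by points of $B$, each an open complete Reinhardt set containing the origin; hence $B^0$ is a complete Reinhardt domain.

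The compact‑convergence half, however, has a genuine gap, located exactly where you flag the "delicate point" — and your proposed repairs do not close it. It is in general \emph{impossible} to trap a compact $K \Subset B^0$ inside a single polydisc spanned by a point of $B$. Take $B^0=\mathbb{B}^2$ (realized later in the paper as a domain of convergence) and $K=\{\vert z_1\vert^2+\vert z_2\vert^2\le 1/2\}$: every spanned polydisc is contained in $\mathbb{B}^2$ by Abel's lemma, yet any polydisc containing both $(1/\sqrt{2},0)$ and $(0,1/\sqrt{2})$ must have polyradius exceeding $(1/\sqrt2,1/\sqrt2)$ and so cannot lie in $\mathbb{B}^2$. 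The same example shows that the spanned polydiscs of a complete Reinhardt domain do \emph{not} form a directed family under inclusion, that the "elementary fact" you invoke (a compact subset of a union of open polydiscs about the origin lies in one of them) is false, and that fattening $K$ to a polydisc‑shaped neighbourhood cannot help. The single‑polydisc reduction is valid for compact subsets of \emph{one} polydisc — which is how the paper finishes the proof of Abel's lemma — but it does not survive the passage to a union of polydiscs with incomparable polyradii. The correct repair needs no single polydisc: cover $K$ by finitely many polydiscs $P'_1,\ldots,P'_m$ with $P'_i\Subset P_i$ and $P_i$ spanned by a point of $B$; Abel's lemma gives $\sum_J \vert c_J z^J\vert_{P'_i}<\infty$ for each $i$, and then
\[
\sum_J \vert c_J z^J\vert_{K} \;\le\; \sum_J\sum_{i=1}^{m} \vert c_J z^J\vert_{P'_i} \;=\; \sum_{i=1}^{m}\sum_J \vert c_J z^J\vert_{P'_i} \;<\;\infty,
\]
since the supremum over $K\subset\bigcup_i P'_i$ is dominated by the sum of the suprema over the $P'_i$. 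This yields normal convergence on every compact subset of $B^0$, as required.
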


\noindent Now we may ask: is every complete Reinhardt domain, the domain of convergence of some power series?
The answer is No: take a union $L$ of two concentric polydics of different polyradii about the origin in $\mathbb{C}^2$ for convenience say, whose absolute 
profile is as shown in the left hand part of figure \ref{fig:Logconvexity}.
\begin{figure}[h]
  \includegraphics[scale=0.4]{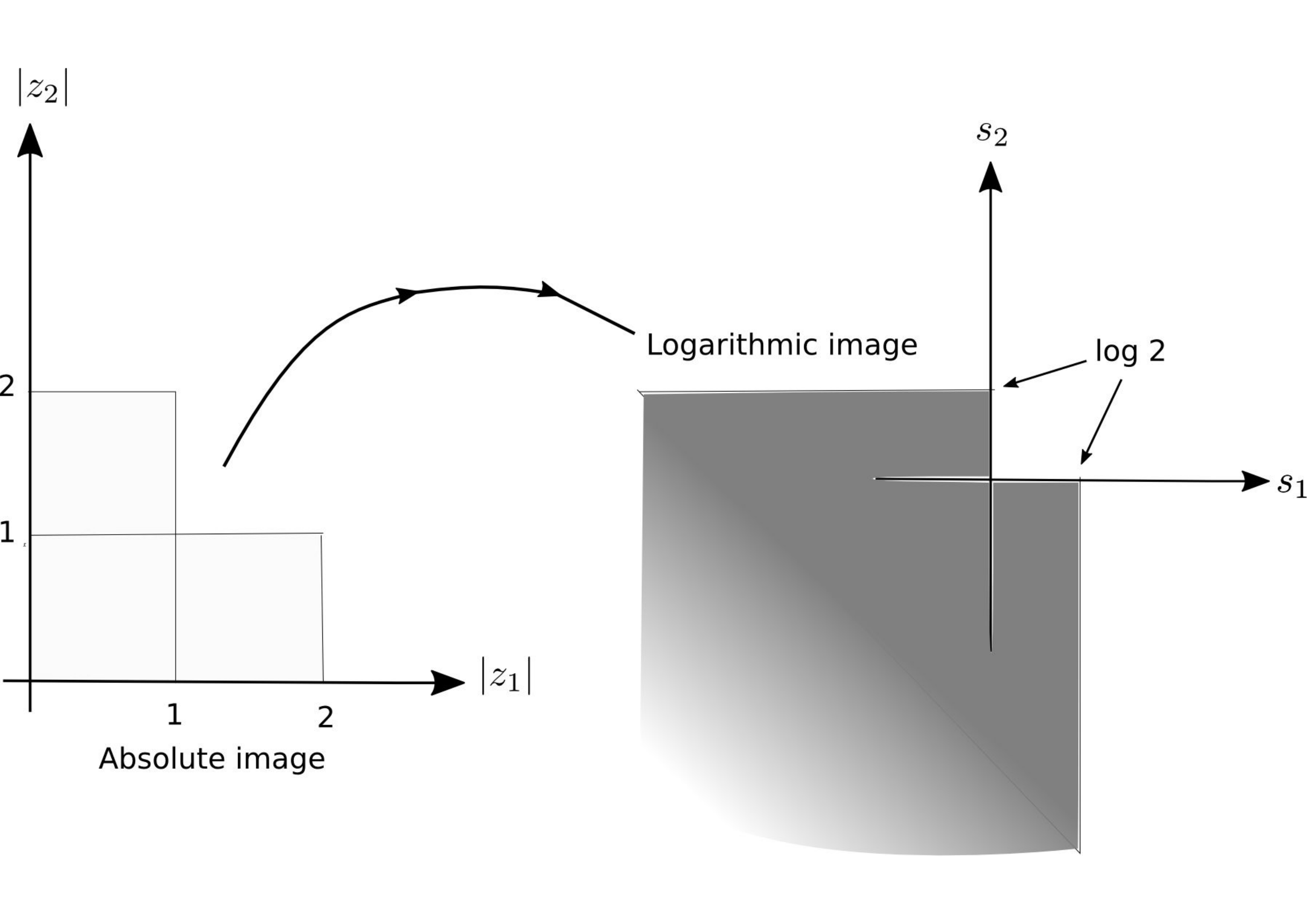}
  \caption{Profiles of the non-logarithmically convex, complete Reinhardt domain $L$.}
  \label{fig:Logconvexity}
\end{figure}

\noindent It is a fact that every power series convergent on this union
 actually converges on a larger domain; specifically, every power series which converges on $L$  converges on its `holomorphic hull', also indicated in figure \ref{fig:overconvergence} and given by 
 
\begin{figure}
  \includegraphics[scale=0.25]{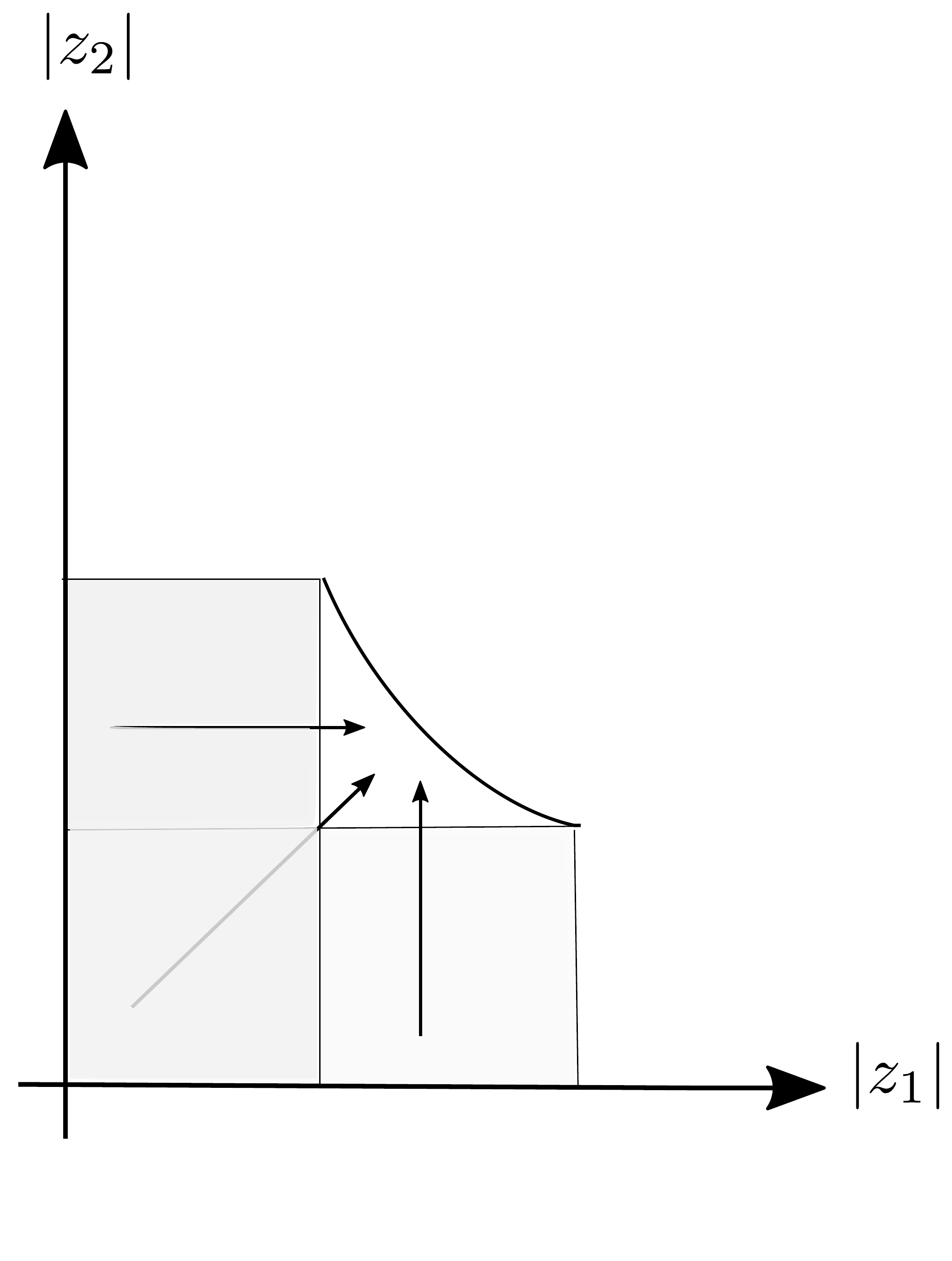}
  \caption{Absolute image of the holomorphic hull $\tilde{L}$ of the domain $L$.}
  \label{fig:overconvergence}
\end{figure}
 
\noindent 
\[
 \tilde{L} = \Delta^2\Big({\rm origin}; (2,2) \Big) \cap \{ (z,w) \in \mathbb{C}^2 \; :\; \vert z \vert \vert w \vert < 2 \}.
\]
This is only a `germ' of an instance of the Hartogs phenomenon peculiar to dimensions $n>1$; the reason for this compulsory (over-)convergence
in this particular example $L$, is actually no less simpler than for the general case of any complete Reinhardt domain which is not `logarithmically convex' ($L$ being a simple example). We therefore proceed directly towards the general case. Suffice it to say for now in short that, domains of convergence have some additional properties, logarithmic convexity being one which we now define. 

\begin{defn}\label{logcvxdefn}
Let $\lambda: (\mathbb{C}^*)^N \to \mathbb{R}^N$ be the map given by
\[
 \lambda(z) = (\log \vert z_1 \vert, \log \vert z_2 \vert,  \ldots,  \log \vert z_N \vert).
 \]
A Reinhardt domain $D$ in $\mathbb{C}^N$ is termed logarithmically convex if its logarithmic image $ \lambda(D^*)$, where 
\[
D^* = \{ z \in D \; :\; z_1 z_2 \ldots z_N \neq 0 \} = D \cap (\mathbb{C}^*)^N
\]
-- necessarily non-empty! -- is a convex set in $\mathbb{R}^N$. The set of points $z$, (atleast) one of whose coordinates is zero, forms the complex analytic variety
 $A = \{ z \in \mathbb{C}^N \; : \; z_1z_2 \ldots z_N = 0 \}$, which we shall  refer
to as the complex coordinate frame.
\end{defn}

\begin{rem}
\noindent For a logarithmically convex complete Reinhardt domain $D$, we shall sometimes write $\lambda(D)$ for $\lambda(D^*)$. We may also consider the map 
$\lambda : D \to [-\infty, \infty)^N$ with the obvious extension of $\lambda$ to points in $A \cap D$ i.e., points with some of its coordinates zero. 
Now, suppose $z^0 \in D \cap (\mathbb{C}^*)^N $. Denote the restriction of the mapping $\lambda$ to $\vert D^*\vert := \tau(D^*)$, by the symbol $\lambda/\tau$ and note that its Jacobian at the point $\tau(z^0)$ is given by 
\[
\frac{1}{\vert z^0_1 \vert} \ldots \frac{1}{\vert z^0_N \vert}
\]
which is evidently non-zero. Consequently by the inverse function theorem, $\lambda/\tau$ is an open map when restricted to $\vert D^*\vert $; it is not difficult to check that $\lambda/\tau$ (without restriction) is itself an open mapping. Further, it is easily seen that $\tau$ is also an open map.
Consequently, the composite of these maps $\lambda/\tau$ and $\tau$ namely, $\lambda$ itself, is an open mapping. So if $p,q \in D^*$ then $p_\lambda = \lambda(p)$, $q_\lambda=\lambda(q)$ are interior points of the convex domain $G = \lambda(D^*)$. Observe that 
every point of the line segment joining the pair $p_\lambda, q_\lambda$ is an interior point of $G$.
\end{rem}

\noindent Suppose $p,q$ are two different points in the domain of convergence of a given power series $\sum c_J z^J$. So, 
$\sum_J \vert c_J p^J \vert$ and $\sum_J \vert c_J q^J \vert$ converge to some finite positive numbers. Recall H\"older's inequality and write
it, as applied to the pair of positive numbers $\vert p^J \vert,\vert q^J \vert$ and the conjugate exponents $1/t, 1/(1-t)$ where $t \in (0,1)$, as follows:
\[
\vert p^J \vert ^t \vert q^J \vert^{1-t} \leq \frac{\big(\vert p^J \vert^t \big)^{1/t}}{1/t}  + \frac{\big(\vert q^J \vert^{1-t}\big)^{1/(1-t)} }{1/(1-t)}.
\]
Hence,
\[
\vert p^J \vert ^t \vert q^J \vert^{1-t} \leq t \vert p^J \vert + (1-t) \vert q^J \vert
\]
from which it is apparent that the given series converges at the point with real coordinates given by
\[
(\vert p_1 \vert^{t}\vert q_1 \vert^{1-t}, \vert p_2\vert^{t}\vert q_2 \vert^{1-t}, \ldots, \vert p_N \vert^{t} \vert q_N\vert^{1-t}).
\]
Infact, this point lies in the interior of the set of points where the power series converges, namely $D=B^0$. Indeed to sketch the reasoning here, suppose 
$p,q \in D^*=  D \cap (\mathbb{C}^*)^N $.  
Note that the logarithmic image of the above point lies on the line segment joining $\lambda(p),\lambda(q)$. Let $B_p, B_q$ be balls centered at the points $\lambda(p),\lambda(q)$ of radius some $\epsilon>0$, such that 
they are contained within $G= \lambda(D^*)$ and such that proposition \ref{charactest} holds therein i.e.,
there exists positive constants $r<1$ and $M_0$ such that 
\[
(j_1 \log \vert z_1 \vert + \ldots + j_N \log \vert z_N \vert)/\vert J \vert < M_0  \log r
\]
holds for all $z \in B_p \cup B_q$. The range of the validity of the above inequality then extends to the convex hull of $B_p$ with $B_q$ by the concavity of the logarithm. As $\lambda$ pulls back open sets to open sets just by continuity of $\lambda$, in particular the (open) convex hull of $B_p$ with $B_q$, in view of proposition \ref{charactest} again, the following basic result follows.
\begin{prop}
The domain of convergence $B^0$ is logarithmically convex.
\end{prop}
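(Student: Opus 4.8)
The plan is to show directly that the logarithmic image $G=\lambda(D^*)$ is convex, by fixing two of its points and verifying that the segment joining them — indeed a whole open convex neighbourhood of that segment — lies in $G$. Since $D^*=D\cap(\mathbb{C}^*)^N$ is non-empty and $\lambda$ is an open map (as recorded in the remark following Definition \ref{logcvxdefn}), $G$ is a non-empty open subset of $\mathbb{R}^N$, and every point of $G$ of the form $\lambda(p)$ with $p\in D^*$ is interior; so it suffices to handle points of this form.

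First I would fix $p,q\in D^*$ and set $a=\lambda(p)$, $b=\lambda(q)$. Since $D=B^0$ is the domain of convergence, Proposition \ref{charactest} supplies, for $p$, a neighbourhood on which $\vert c_J z^J\vert\leq M_p r_p^{\vert J\vert}$ for all $J$, with $r_p<1$; intersecting this neighbourhood with $(\mathbb{C}^*)^N$, pushing it forward by $\lambda$, and shrinking, I obtain an open ball $B_a\subset G$ centred at $a$ on which the same estimate holds. Doing likewise at $q$ produces a ball $B_b\subset G$ centred at $b$, and replacing $M_p,M_q$ by their maximum $M$ and $r_p,r_q$ by their maximum $r<1$, I arrange that $\vert c_J w^J\vert\leq M r^{\vert J\vert}$ holds for every $J$ and every $w$ with $\lambda(w)\in B_a\cup B_b$. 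Taking logarithms, this says precisely that $\langle J,x\rangle\leq \log(M/\vert c_J\vert)+\vert J\vert\log r$ for all $x\in B_a\cup B_b$ and all $J$.

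The key step is then the observation that, for each fixed $J$, the left-hand side $x\mapsto\langle J,x\rangle$ is a linear functional, so an upper bound for it valid on $B_a$ and on $B_b$ is automatically valid on every convex combination of a point of $B_a$ with a point of $B_b$, i.e. on the (open, convex) convex hull $V$ of $B_a\cup B_b$ — this is the ``concavity of the logarithm'' alluded to in the discussion above, here in its sharpest, affine form. Hence $\vert c_J w^J\vert\leq M r^{\vert J\vert}$ for every $J$ and every $w$ in the open set $W=\lambda^{-1}(V)\cap(\mathbb{C}^*)^N$ (open because $\lambda$ is continuous and $V$ is open). Every point of $W$ therefore has a neighbourhood, namely $W$ itself, on which the uniform estimate of Proposition \ref{charactest} holds, so $W\subset B^0=D$; since $W\subset(\mathbb{C}^*)^N$ this gives $W\subset D^*$ and hence $V=\lambda(W)\subset\lambda(D^*)=G$. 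As $V$ contains the segment $[a,b]$, and $a,b\in G$ were arbitrary, $G$ is convex, which by Definition \ref{logcvxdefn} is exactly the assertion.

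I expect the only real subtlety — and the step worth dwelling on — to be the passage from ``converges at the single geometric-mean point'' to ``lies in the interior $B^0$''. The bare H\"older inequality $\sum_J\vert c_J z^J\vert\leq t\sum_J\vert c_J p^J\vert+(1-t)\sum_J\vert c_J q^J\vert<\infty$, used in the discussion preceding the proposition, only places the geometric-mean point in $B$, not in $B^0$; it is the quantitative, neighbourhood form of the convergence test in Proposition \ref{charactest}, together with the open-mapping property of $\lambda$, that upgrades this to an interior statement and hence to convexity of $G$ rather than merely of its closure. One should also remember to check that the two balls $B_a,B_b$ can be taken with a common pair of constants $(M,r)$, which is harmless since finitely many maxima preserve the condition $r<1$.
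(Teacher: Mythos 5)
Your proof is correct and follows essentially the same route as the paper: apply Proposition \ref{charactest} at $p$ and $q$ to get uniform estimates on balls $B_a,B_b$ in the logarithmic space, use the linearity of $x\mapsto\langle J,x\rangle$ (the paper's ``concavity of the logarithm'') to propagate the bound to the convex hull, and pull back via the openness/continuity of $\lambda$ to place that hull inside $G$. Your write-up is in fact more careful than the paper's sketch — in particular you correctly keep the $\log\vert c_J\vert$ term in the exponentiated estimate and you isolate the genuine subtlety that H\"older alone only yields membership in $B$, not $B^0$.
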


\noindent Well, how does one `discover' this? How can one guess other properties, if any, that is possessed by all those domains which are precise domains of convergence of some power series? Is it possible to pin down all common features shared by domains of convergence of power series which characterize them completely? To answer all this, one needs to get to the roots of the theory of power series: first, the (precise/largest) domain of convergence of any given power series in a single variable is always a disc whose radius is read off from the coefficients of the given series, using the following

\begin{thm}[Cauchy -- Hadamard formula]
The radius of convergence of the power series $\sum c_j z^j$ is given by
\[
\frac{1}{\limsup_{j \to \infty} \sqrt[j]{\vert c_j \vert}}
\]
\end{thm}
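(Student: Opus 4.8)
The plan is to set $L=\limsup_{j\to\infty}\sqrt[j]{|c_j|}\in[0,+\infty]$ and $R=1/L$, with the usual conventions $1/0=+\infty$ and $1/(+\infty)=0$, and then to establish two complementary facts: (a) the series converges absolutely — indeed normally on every compact subset of the disc $\{|z|<R\}$ — whenever $|z|<R$; and (b) the general term fails to tend to $0$, so the series diverges, whenever $|z|>R$. Together these pin down the radius of convergence (the supremum of radii of discs about the origin on which the series converges) to be exactly $R$. It is convenient to dispatch the degenerate cases first: if $L=+\infty$ then $R=0$ and (b) alone suffices; if $L=0$ then $R=+\infty$ and (a) alone suffices; in the main case $0<L<+\infty$ both halves are needed.

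For part (a), fix $z$ with $|z|<R$ and choose a real $\rho$ with $|z|<\rho<R$, so that $1/\rho>L$. By the defining property of the limit superior, all but finitely many indices $j$ satisfy $\sqrt[j]{|c_j|}<1/\rho$, hence $|c_j|<\rho^{-j}$ and therefore $|c_j z^j|<(|z|/\rho)^{\,j}$ for $j$ large. Since $0\le|z|/\rho<1$, comparison with the convergent geometric series $\sum(|z|/\rho)^{\,j}$ gives absolute convergence at $z$; running the same estimate with $|z|$ replaced by an arbitrary $r<\rho<R$ shows $\sum|c_j|\,r^j<\infty$, so by the one-variable instance of Abel's lemma (or directly) the series converges normally on $\{|z|\le r\}$ for every $r<R$.

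For part (b), fix $z$ with $|z|>R$, so that $1/|z|<L$. By the defining property of the limit superior there are infinitely many indices $j$ with $\sqrt[j]{|c_j|}>1/|z|$, i.e.\ $|c_j z^j|>1$ for infinitely many $j$; in particular $c_j z^j\not\to 0$, so $\sum c_j z^j$ cannot converge. Combining (a) and (b), the series converges on the open disc $\{|z|<R\}$ and diverges at every point outside the closed disc of radius $R$, whence the radius of convergence equals $R=1/L$, as claimed.

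The only genuine subtlety — and the step to carry out most carefully — is the correct use of the two one-sided characterizations of $L=\limsup_{j\to\infty}\sqrt[j]{|c_j|}$ (for every $\varepsilon>0$ one has $\sqrt[j]{|c_j|}<L+\varepsilon$ eventually, while $\sqrt[j]{|c_j|}>L-\varepsilon$ holds infinitely often), together with the boundary-value conventions for $1/0$ and $1/\infty$; once these are handled cleanly, the estimates reduce to the routine geometric-series comparisons above. Note also that nothing is asserted about the behaviour on the circle $|z|=R$, where convergence may go either way.
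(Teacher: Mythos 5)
Your proof is correct: it is the standard root-test argument, splitting the limsup characterization into the "eventually" half (giving geometric-series domination and hence normal convergence on $\{|z|\le r\}$ for $r<R$) and the "infinitely often" half (giving non-vanishing of the general term for $|z|>R$), with the degenerate cases $L=0$ and $L=+\infty$ handled by the appropriate single half. The paper states this one-variable theorem without proof, treating it as classical background (it is invoked later in deriving the several-variable analogue), so there is nothing to compare against; your argument is exactly the expected one.
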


\noindent It is natural to ask for a constructive method of describing the domain of convergence of a power series of several variables. Now, the uniformity in the shape of the domain of convergence of power series in several variables is not as trivial as in the case of one variable, for, as we shall see, the ball and the polydisc are each, the natural domain of convergence of some power series but they are not biholomorphically equivalent. 
All we know at this point, is that domains of convergence of power series in several variables are also completely determined by their absolute profile, so we may focus on $\tau(D)$; but then $\tau(D)$ is not open and to avoid this annoyance, we pass to the logarithmic image $\lambda(D)$. More importantly, 
$\lambda(D)$ has a geometric property namely convexity, shared by all domains of convergence of power series. Further, they can be expressed as the union of concentric polydiscs.

\begin{defn}
A polydisc $U=U(z^0,r)$ is termed a polydisc of convergence of $\sum c_J z^J$ if $U \subset B$ but in any polydisc $U(z^0, R)$ where each $R_j \geq r_j$ for $j=1, 2, \ldots, N$ with at least one of the inequalities being strict, there are points in $U(z^0, R)$ where the series diverges.\\

\noindent  Every such polyradii $(r_1,r_2,\ldots,r_n)$ of $U(z^0,r)$ is called a conjugate polyradii i.e., the radii of each polydisc of convergence are called conjugate radii of convergence.
\end{defn}
\noindent If we join the dots formed by the various conjugate radii in the absolute space, what do we get? The answer to this is facilitated by a higher dimensional analogue of the Cauchy -- Hadamard formula:
\begin{prop} 
The conjugate radii of convergence of the power series $\sum\limits_{k=1}^{\infty}\sum_{\vert J \vert=k} c_J z^J$ satisfy the relation
\begin{equation}\label{R}
\limsup_{\vert J \vert \to \infty} \sqrt[\vert J \vert]{\vert c_J r^J \vert}=1
\end{equation}
i.e., $\limsup\limits_{k\to \infty} \max\{|c_J r^J|^{1/k}: |J|=k\} = 1$.
\end{prop}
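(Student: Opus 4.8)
The plan is to read (\ref{R}) as the $N$-variable Cauchy--Hadamard identity and to establish the two inequalities separately, in close analogy with the one-variable case: Abel's lemma gives the bound $\leq 1$, while the maximality built into the definition of a polydisc of convergence gives the bound $\geq 1$. Fix a polydisc of convergence $U=U(0,r)$ with conjugate radii $r=(r_1,\ldots,r_N)$, where we may assume each $r_j\in(0,\infty)$. First I would record that the two displayed formulations literally coincide: setting $M_k:=\max\{|c_Jr^J|:|J|=k\}$, one has $\sup_{|J|\geq m}|c_Jr^J|^{1/|J|}=\sup_{k\geq m}M_k^{1/k}$ for every $m$, because the multi-indices with $|J|\geq m$ are partitioned by degree and $\sup_{|J|=k}|c_Jr^J|^{1/k}=M_k^{1/k}$; letting $m\to\infty$ gives $\limsup_{|J|\to\infty}|c_Jr^J|^{1/|J|}=\limsup_{k\to\infty}M_k^{1/k}$, so it suffices to show this common value is $1$.

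For the inequality $\limsup_{|J|\to\infty}|c_Jr^J|^{1/|J|}\leq 1$: fix $\epsilon\in(0,1)$ and set $\rho=((1-\epsilon)r_1,\ldots,(1-\epsilon)r_N)$. Since $\rho\in U(0,r)\subset B$, the series $\sum|c_J\rho^J|$ converges, so its terms are bounded, say $|c_J\rho^J|\leq C_\epsilon$ for all $J$. Then $|c_Jr^J|=|c_J\rho^J|(1-\epsilon)^{-|J|}\leq C_\epsilon(1-\epsilon)^{-|J|}$, hence $|c_Jr^J|^{1/|J|}\leq C_\epsilon^{1/|J|}(1-\epsilon)^{-1}$, so that $\limsup_{|J|\to\infty}|c_Jr^J|^{1/|J|}\leq(1-\epsilon)^{-1}$; letting $\epsilon\to0^{+}$ yields the claim. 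This is just the estimate from the proof of Abel's lemma applied to a point spanning a slightly shrunken polydisc.

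For the reverse inequality I would argue by contradiction from the maximality of $U(0,r)$. Suppose $L:=\limsup_{|J|\to\infty}|c_Jr^J|^{1/|J|}<1$ and choose $q$ with $L<q<1$. Then $|c_Jr^J|\leq q^{|J|}$ for all $|J|$ sufficiently large, and, enlarging the constant to swallow the finitely many remaining multi-indices, $|c_Jr^J|\leq A\,q^{|J|}$ for all $J$, with some $A\geq1$. Pick $s$ with $q<s<1$ and put $R_j=r_j/s>r_j$. For $z$ in the closed polydisc $\overline{U(0,R)}$ we get $|c_Jz^J|\leq|c_J|R^J=|c_Jr^J|\,s^{-|J|}\leq A(q/s)^{|J|}$, so $\sum_J|c_J|R^J\leq A\sum_{k\geq0}\binom{k+N-1}{N-1}(q/s)^k<\infty$ because $q/s<1$; thus the series converges normally on $\overline{U(0,R)}$, and in particular $U(0,R)\subset B$. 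But every coordinate radius of $R$ strictly exceeds that of $r$, so the defining property of a polydisc of convergence forces $U(0,R)$ to contain points of divergence --- a contradiction. Hence $L\geq1$, which together with the previous paragraph gives $L=1$.

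The two estimates are routine; the step that needs the most care is the lower bound, where one must be sure the hypothesis $L<1$ propagates to an \emph{honestly larger} polydisc --- all $N$ coordinate radii strictly increased --- and that the polynomially many monomials $\binom{k+N-1}{N-1}$ of each degree do not disturb the geometric decay, this combinatorial count being essentially the only place the argument differs from the classical one-variable proof. A minor bookkeeping point to dispose of at the outset is the degenerate case in which some $r_j$ equals $0$ or $+\infty$: the former makes $U(0,r)$ fail to be a polydisc and renders the assertion vacuous, while the latter reduces to a power series in fewer variables under the convention $\infty^{0}=1$.
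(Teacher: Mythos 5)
Your proof is correct, and it takes a different route from the paper's. The paper substitutes $z=\zeta\cdot r$, regroups by homogeneous degree to obtain the one-variable series $\sum_k\bigl(\sum_{|J|=k}|c_J|r^J\bigr)\zeta^k$, argues (by the same maximality-of-$U$ contradiction you use) that this series has radius of convergence exactly $1$, invokes the one-variable Cauchy--Hadamard formula to get $\limsup_k\sqrt[k]{\sum_{|J|=k}|c_J|r^J}=1$, and only then converts the inner sum to the maximum via the two-sided bound $\max\leq\sum\leq(k+1)^N\max$ together with $(k+1)^{N/k}\to1$. You instead prove the two inequalities directly on the multi-indexed series: the upper bound by the Abel-lemma device of bounding the terms at the interior point $(1-\epsilon)r$ and extracting $|J|$-th roots, and the lower bound by showing that $L<1$ would force normal convergence on a strictly larger polydisc $U(0,r/s)$, contradicting maximality. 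The combinatorial count of multi-indices of a given degree appears in both arguments but in different places: the paper needs it to pass from the sum over $|J|=k$ to the max, while you need it only to verify that $\sum_J|c_J|R^J$ converges, where polynomial growth is harmlessly absorbed by the geometric factor $(q/s)^{|J|}$. What the paper's route buys is the intermediate identity $\limsup_k\sqrt[k]{\sum_{|J|=k}|c_J|r^J}=1$ as a by-product and a clean reduction to a known one-variable theorem; what yours buys is a self-contained argument that never leaves the lattice $\mathbb{N}_0^N$ and makes the role of each hypothesis (containment $U\subset B$ for $\leq$, maximality for $\geq$) completely transparent. Your closing remarks about the equivalence of the two displayed formulations and about degenerate polyradii are correct and harmless.
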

\begin{proof}
Let $r$ be a conjugate radii of convergence of the given series
\begin{equation} \label{S}
\sum\limits_{k=1}^{\infty}\sum_{\vert J \vert=k} c_J z^J
\end{equation}
Let $\zeta \in \Delta$. Then $z= \zeta \cdot r$ lies in the polydisc of convergence $U$ of polyradius $r=(r_1,\ldots,r_N)$ about the origin, the series converges absolutely in $U$ and after regrouping the terms, we obtain from (\ref{S}), the following series in the variable $\zeta$:
\begin{equation*}
\sum\limits_{\vert J \vert=1}^{\infty} \vert c_J \vert z^J =  \sum\limits_{\vert J \vert=1}^{\infty} \vert c_J \vert  \zeta^{\vert J \vert} 
r^{\vert J \vert} = \sum\limits_{k=1}^{\infty}\Big(\sum_{\vert J \vert =k} \vert c_J \vert r^J\Big) \zeta^{\vert J \vert}
\end{equation*}
So we obtain from (\ref{S}) the series: 
\[
\sum\limits_{k=1}^{\infty}\Big(\sum_{\vert J \vert =k} \vert c_J \vert r^J\Big) \zeta^{\vert J \vert}
\]
which is a series in one complex variable $\zeta$, known to be convergent for $\zeta \in \Delta$.\\
\noindent If there exists $\zeta_0$ outside the closed unit disc at which this series converges, then it must be convergent on the disc centered at the origin of radius $\zeta_0$, which will imply that the coefficients satisfy the following decay estimate:
\[
\vert c_J \vert r^{\vert J \vert} \leq \frac{M}{\vert \zeta_0 \vert^{\vert J \vert}},
\]
that is $\vert c_J \vert \leq M/\vert (\zeta_0 r)^J \vert$.
This means that the series at (\ref{S}) must converge on the polydisc about the origin with polyradii $(\zeta_0 r_1, \zeta_0 r_2, \ldots, \zeta_0 r_N)$ contradicting that $U$ is a (maximal) polydisc of convergence. Thus, the series (\ref{S}) diverges for every point $\zeta$ with $\vert \zeta \vert>1$. By the Cauchy -- Hadamard formula for one variable, we therefore have
\begin{equation}\label{R'}
\limsup\limits_{k \to \infty} \sqrt[k]{\sum_{\vert J \vert = k} \vert c_J \vert r^J} =1.
\end{equation}
\noindent It only remains to show that this equation is equivalent to the one claimed in the statement of our proposition. For this, first choose among all the monomials $\{c_Jz^J\}$ with $\vert J \vert = j_1 + \ldots +j_n = k$, the one for which the maximum in 
\[
\max_{\vert J \vert= k} \vert c_J \vert r^J
\]
-- the maximum of sup-norms of monomials on the polydisc of radius $r$ -- is attained. Let $M=(m_1, m_2, \ldots,m_N)$ be such that this maximum is attained i.e.,
\[
\vert c_M \vert r^M  = \max_{\vert J \vert=k} \{ \vert c_ J \vert r^J\}.
\]
Then write down the obvious estimate
\[
\vert c_M \vert r^M \leq \sum_{\vert J \vert = k} \vert c_J \vert r^J \leq (k+1)^N \vert c_M \vert r^M,
\]
with the last inequality obtained by overestimating the number of terms appearing in the sum in the middle! Using this and the fact that $(k+1)^{N/K} \to 1$ as $K \to \infty$, we may rewrite (\ref{R'}) as the relation
\[
\limsup_{k \to \infty} \sqrt[k]{\vert c_M \vert r^M} = 1,
\]
from which the asserted relation of the proposition follows. 
\end{proof}
\noindent Now, note that the relation (\ref{R}) in the proposition above, can be rewritten as the equation
\begin{equation}\label{Rphi}
\varphi(r_1, r_2, \ldots,r_N)=0
\end{equation}
which `ties together' a relation among the conjugate radii of convergence of the series (\ref{S}). This equation determines the boundary of the domain 
$\tau(B^0)$ which depicts the domain of convergence $B^0$ in the absolute space. Next, substitute $r_j = e^{s_j}$ in (\ref{Rphi}). This leads to the last equation to be transformed as
\[
\psi(s_1, s_2, \ldots,s_N) = 0
\]
-- the equation for the boundary of $\lambda(B^0)$, the logarithmic image of $B^0$, some convex domain in $\mathbb{R}^N$. Indeed, let us rewrite equation (\ref{R}) after taking logarithms:
\[
\limsup\limits_{\vert J \vert \to \infty} \Big( \frac{j_1 \log r_1 + j_2 \log r_2 + \ldots + j_N \log r_N }{ j_1 + j_2 + \ldots +j_N} + \log \vert c_J \vert/\vert J \vert \Big) = 0
\]
So ultimately, in the variables $s_1, \ldots, s_N$, the relation (\ref{R}) reads:
\begin{equation}\label{Rpsi}
\limsup \limits_{\vert J \vert \to \infty} \Big( \frac{j_1 s_1 + j_2 s_2 + \ldots + j_N s_N }{ j_1 + j_2 + \ldots +j_N} + \log \vert c_J \vert/\vert J \vert \Big) = 0
\end{equation}
\noindent Indeed, the left hand side here is the function which we denoted by $\psi(s_1, \ldots,s_N)$ earlier; the above equation expresses $\psi$ as the limsup of a family, infact a sequence, of affine functions. Thus, $\psi$ must be convex. The domain of convergence $D$ of our given power series corresponds to the domain $G=\{ s \; : \; \psi(s) < 0 \}$. Let us rewrite this more precisely and record it for now: $D = \{ z \in \mathbb{C}^N \; : \; 
\varphi(z) <0\}$ where $\varphi$ is given in terms of the coefficients of our power series $\sum c_J z^J$ by
\begin{equation} \label{deffnforbdy}
 \varphi(z_1, \ldots, z_N) = \limsup_{\vert J \vert \to \infty} \sqrt[\vert J \vert]{\vert c_J z^J \vert} - 1.\\
\end{equation}
Thus, on the one hand, it {\it is} possible to read off the equation defining the boundary of its domain of convergence from its coefficients as in the one-variable case; on the other hand, as we shall see in what follows, the possibilities for the boundary is going to be as varied as a whole range of convex functions -- the mild restrictions to be satisfied by a convex function, in order for it to define the logarithmic image of the domain of convergence of some power series, can be found paraphrased at (\ref{deffncondn}).\\  

\noindent The radius of convergence in any direction specified by a unit vector $z \in \mathbb{C}^N$, is a function only of its radial component
$(\vert z_1 \vert, \ldots, \vert z_N \vert)$ which varies over the unit sphere in $\mathbb{R}^N$ and is given by
\[
R(z) = \frac{1}{\limsup_{\vert J \vert \to \infty} \sqrt[\vert J \vert]{\vert c_J z^J \vert}}.
\]
This is indeed the radial function of our domain of convergence $D$ which is star-like with respect to the origin. Recall that every star-like domain has associated to it a pair of special functions called the radial and (its reciprocal) the Minkowski gauge functional; for an immediate reference, the reader may consult the appendix material in the last section. So, every domain of convergence $D$ may also be described by its Minkowski gauge functional which turns out to be explicitly expressible in terms of the coefficients of any power series 
whose domain of convergence is $D$; as we find this elaborately dealt with care in \cite{JarPflu} (see lemma 1.5.13 therein), we shall minimize repetition and 
only note key formulae required for our purpose of describing certain salient features about the shape of the domains of convergence.
Let us express the aforementioned functions in terms of the defining function $\varphi$ for $D$ and the function $\psi$ defining its logarithmic image $G$ as above. The radial function reads:
\[
R(z) = \frac{1}{1+\varphi(z_1, \ldots, z_N)} = e^{-\psi(\log \vert z_1\vert, \ldots, \log \vert z_N \vert)}
\]
Consider the multitude of all power series with common domain of convergence $D$; it may be of some interest here to 
know if, this condition which holds, for all such series converging precisely on the same domain, can be cast analytically. 
To do so, we write out $\psi$ explicitly and the sought-for condition may be expressed as the following equation
for the gauge function $g=g_D$, directly in terms of the coefficients of {\it any} power series converging on our fixed $D$:
\[
g(z_1, \ldots, z_N) = \limsup_{\vert J \vert \to \infty} \big(\vert c_J \vert^{1/\vert J \vert} \vert z_1 \vert^{j_1/\vert J \vert} \ldots 
\vert z_N \vert^{j_N/\vert J \vert}\big).
\]
This when expressed in terms of $\psi$ reads: $g=\exp  \psi \circ \lambda$. This is saying that the function $g \circ {\rm Exp}$ is a logarithmically convex function, where ${\rm Exp}$ denotes the mapping
$\mathbb{R}^N \to (\mathbb{R}^+)^N$ given by ${\rm Exp}(s) = (e^{s_1}, \ldots, e^{s_N})$. 
As both the radial 
and gauge functions are functions of $(\vert z_1 \vert, \ldots, \vert z_N \vert)$, we shall by abuse of notation,
think of them as functions on $(\mathbb{R}_+)^N$ as well.
The convexity of $\psi$ and its finiteness on $\mathbb{R}^N$ (provided the domain of convergence is non-empty!) implies its continuity, so $-\log R(z) = \psi(\lambda(z))$ is continuous on $D^*$, thereby yielding the continuity of $g$ therein as well; this will play a crucial role in enlightening the topology of the boundaries of domains of convergence, to be discussed later. For now, note that 
\begin{equation} \label{-logR}
 - \log R\big({\rm Exp}(s_1, \ldots,s_N) \big) = \psi(s)
\end{equation}
where we already know $\psi$ to be a convex function. We conclude with the observation that the radial function of the domain 
of convergence of any power series has the property that $-\log R \circ {\rm Exp}$ 
is a convex function on $\mathbb{R}^N$. Finally, we remark in passing an analytic-cum-geometric characterization 
\footnote{This is a fundamental result in the subject of Several Complex Variables and is tantamount to characterizing which among such domains are `domains of holomorphy', to be briefed upon later in this article. We presume the reader would have an acquaintance with subharmonic functions; an upper semicontinuous function on a domain in $\mathbb{C}^N$ is termed plurisubharmonic if its restriction to each complex line intercepted by the domain is subharmonic.
Such characterizations of domains of holomorphy can be found in many books on the subject; the book by Vladimirov has been cited here, as it seems well-suited for study in parallel with this article.}
of those complete multicircular domains which qualify to be domains of convergence of some power series, given by the 
`plurisubharmonicity' of this fundamental function $-\log R$ where $R$ is the radial function of the given multicircular domain. 
These fundamental matters are well expounded in \cite{V} where such results are attained in the more general setting of 
Hartogs domains.\\

\noindent Before moving on, a bit of notation: let $p_K$ denote the point 
\[
(\frac{k_1}{k_1+ \ldots +k_N}, \ldots , \frac{k_N}{k_1+ \ldots +k_N})
\]
for $K \in \mathbb{N}^N$.   Let $PS{\mathbb{Q}^N}$ be the set of all such points $p_K$ which forms a countable dense subset of $P$. We note that $PS{\mathbb{Q}^N}$ is precisely the set of all points on $PS_N$ with rational coordinates.\\

\noindent We now proceed towards showing the existence for any given logarithmically convex complete multi-circular domain $D$ in $\mathbb{C}^N$, a power series whose domain of convergence is precisely $D$; we shall actually describe a method for writing down one explicitly. As the key property of $D$ is the convexity of the domain  
$G:=\lambda(D)$, we first study the link between the domain $G$ and the basic functions which constitute any power series namely, the monomial functions.
Notice first that monomial functions on 
$\mathbb{C}^N$ correspond to linear functionals on its logarithmic image.  More precisely, the monomial function
$z^J = z^{j_1} \ldots z^{j_N}$ transforms into the linear functional $ s \to j_1s_1 + \ldots j_N s_N$ on $\mathbb{R}^N$
whose kernel is therefore $H_J = \{s \in \mathbb{R}^N \; : \; j_1s_1 + \ldots j_N s_N = 0\}$. To spell out the result that we are after in brief, if an appropriate 
translate of $H_J$ is a supporting hyperplane  for $G$, then the (exponential of the) amount of translation required 
essentially renders the sought for coefficient of $z^J$ in our candidate power series provided, the norm of 
the gradient vector $(j_1, \ldots,j_N)$ 
is one -- we shall come to the appropriate choice of the norm in which we shall measure the amount of translation done, later.
To ensure this condition on the norm of the gradient is easy: we just need to divide out the defining equation for $H_J$ by 
$\vert J \vert$. But then notice that $z^J = z^{j_1} \ldots z^{j_N}$ with $J=(j_1, \ldots,j_N)= m (k_1, \ldots,k_N) = mK$ gives rise to the same 
$J/\vert J \vert$ as does $z^K=z^{k_1} \ldots z^{k_N}$. Our goal here, is to `discover' the above-mentioned result.\\

\noindent Recall our observation around equation (\ref{Rpsi}), that the logarithmic image $G_g$ of the {\it domain} of convergence of a given power series $g=\sum c_J z^J$ is the convex domain given by
\[
\Big\{ s \in \mathbb{R}^N \; : \; \limsup\limits_{\vert J \vert \to \infty} \Big(\frac{j_1 s_1 + j_2 s_2 + \ldots + j_N s_N }{ j_1 + j_2 + \ldots +j_N} + \frac{\log \vert c_J \vert}{\vert J \vert} \Big)  <0 \Big\}
\]
Observe that this is essentially equivalent to the statement that the logarithmic image of the domain of convergence of every power series is the liminf of a sequence of half-spaces whose gradient vectors belong to $PS\mathbb{Q}^N$. Indeed,
\begin{equation} \label{domcvgenrmlfrm}
G_g = \liminf_{J \in \mathbb{N}^N} \{ H_J \}
\end{equation}
with $H_J$ denotes the half-space $\{ s \; : \;  \langle J/\vert J \vert , s  \rangle + \log \vert c_J \vert^{1/\vert J \vert}  <0 \}$.\\

\noindent The fact that the gradients of the bounding/supporting hyperplanes for $G_g$ is `positive', is contained within the conditions imposed on our $D$. Indeed, continuing our study of the logarithmic image $G$, notice by the convexity of $G$ that any point $q \in \partial G$ has (possibly many) a supporting hyperplane for $G$ in $\mathbb{R}^N$ passing through it; let $H_q$ denote one such and be defined by say,
\[
A_q(x) := \langle m, x \rangle + c 
\]
where $m \in \mathbb{R}^n \setminus \{0\}$. So $A_q(q)=0$ and $A_q(x)$ is of the same sign throughout $G$. As usual, multiplying $A_q$ by $-1$ if necessary, we may assume $A_q$ is negative-valued throughout $G$. Just by the fact that $D$ has a neighbourhood of the origin contained in it, $G$ has a neighbourhood of $(-\infty, \ldots, -\infty)$ inside it; indeed, note that there is a positive number $M$ such that all points with  its coordinates all less than $-M$ must be contained in $G$ giving an infinite box-neighbourhood of $(-\infty, \ldots, -\infty)$ which is contained inside $G$ in its `left-bottom'. Further, the {\it complete} circularity of $D$ translates into the following condition about $G$: 
if $s^0 \in \overline{G}$ then all points $s$ with $s_j \leq s^0_j$ for all $j$, must also be contained in $\overline{G}$ -- this again gives an infinite box in the form of an orthant bounded by hyperplanes with gradients parallel to the axes, all passing through the point $s^0$. An illustration
is furnished in figure \ref{infinite-box}.
\begin{figure}
\includegraphics[scale=0.15]{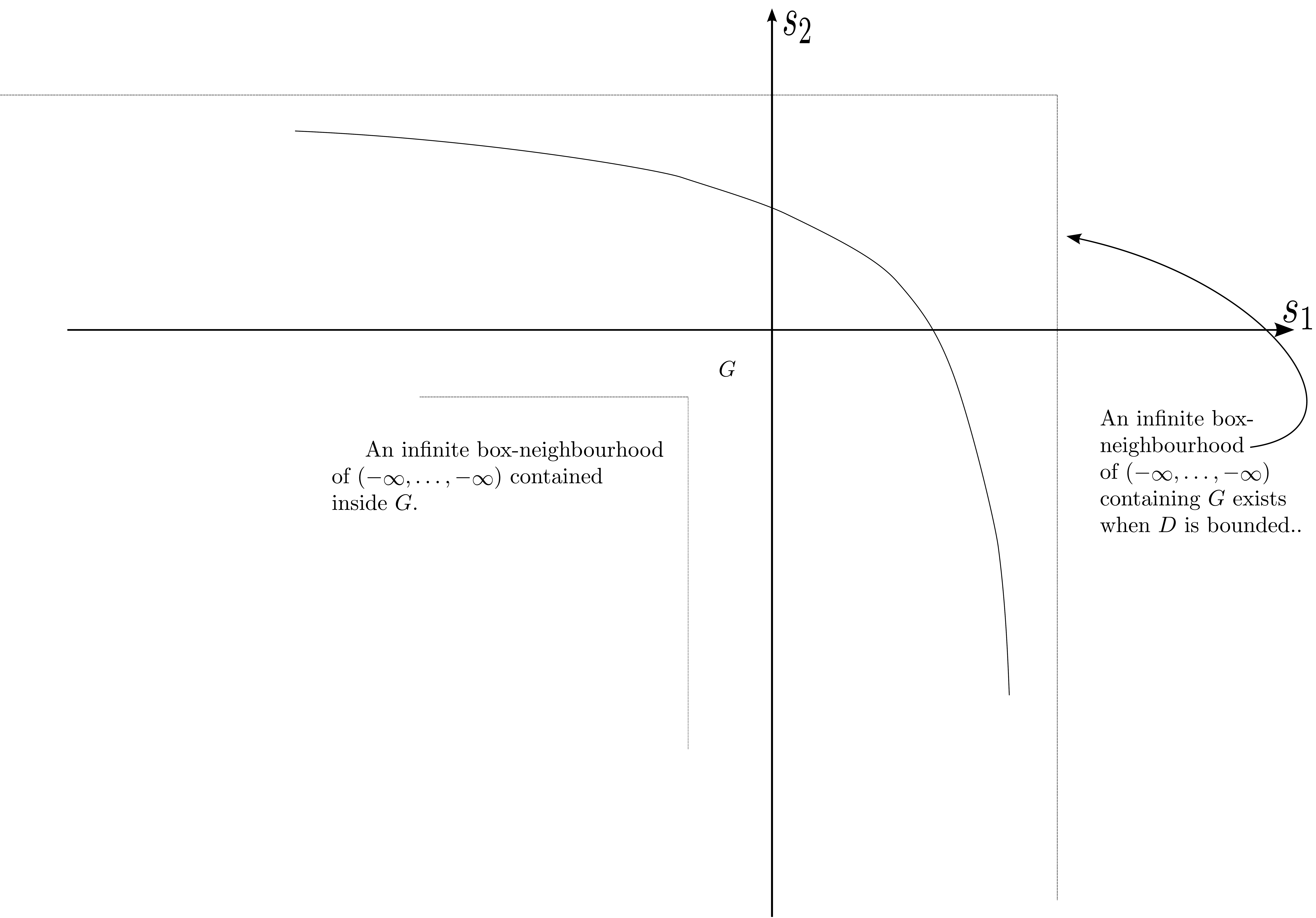}
 \caption{Logarithmic images of complete Reinhardt domains.}
\label{infinite-box}
\end{figure}
\noindent These features of $G$ force all the components $m_j$ of the gradient vector of $A_q$ to be non-negative; for if $m_j$ were negative for some $j$, then 
pick any $s^0 \in G$ and
consider points of the form 
\[
p(s) = (s_1^0, \ldots, s_{j-1}^0, s, s_{j+1}^0, \ldots, s_n^0 )
\]
with $s$, a negative number to be chosen soon. Then, on the one hand $p(s) \in G$ will imply 
\[
A_q(p(s)) = m_1s_1^0 + \ldots + m_{j-1}s_{j-1}^0 +m_js + m_{j+1}s_{j+1}^0 + \ldots + m_n s_n^0 <0, 
\]
which we rewrite as 
\[
m_js  < -\big(  m_1s_1^0 + \ldots + m_{j-1}s_{j-1}^0 + m_{j+1}s_{j+1}^0 + \ldots + m_n s_n^0\big).
\]
On the other hand, we can use the freedom to take $p(s)$ to be points in $G$ -- indeed, within the aforementioned infinite box-neighbourhood of $(-\infty, \ldots, -\infty)$ -- with $s$ negative and of modulus as large as we please; in particular, to contradict the above inequality whose right side is a constant. This shows that every component $m_j$ of the normal vector $m$ of every supporting hyperplane for $G$ must be non-negative.  
Hence, every supporting hyperplane for $G$, is given by an equation of the form $\{ s \in \mathbb{R}^N \; : \; A_q(s) =0 \}$ where
\[
A_q(s):= m_1 s_1 + \ldots + m_N s_n +d^q
\]
for some  positive non-negative numbers $m_j$, which needless to say, depend on $q$.  Actually, we may divide out the defining equation of this hyperplane by $\vert m_1 \vert + \ldots + \vert m_N \vert$ to assume that $m_j$'s are all numbers in $[0,1]$ with $\vert m_1 \vert + \ldots + \vert m_N \vert =1$ and we shall suppose so, in the sequel; this also results in a change in the constant $d^q$ but we shall continue to denote it by $d^q$. In other words $m$ lies in the non-negative face, denoted earlier by $PS_N$, of the standard simplex. This will be important in the sequel; so, let us spell this out explicitly here: the defining function for every supporting hyperplane for $G$ can be (and shall always be) written in a form such that its gradient vector belongs to $PS_N$. With this normalization made, $d^q$ in modulus, gives the distance of the hyperplane $H_q$ from the origin, as measured in the $l^\infty$-metric. Indeed, note first that
\[
\vert m_1 s_1 + \ldots +m_Ns_N \vert \leq \vert m \vert_{l^1} \vert s \vert_{l^\infty} = \vert s \vert_{l^\infty}.
\]
But then for $s \in H_q$, the left hand side is equal to $\vert d^q \vert$, which means that $\vert d^q \vert \leq \vert s \vert_{l^\infty}$ for all 
$s \in H_q$; noting that the point $s^q := (-d^q, \ldots, -d^q)$ satisfies $A_q(s^q)=0$ i.e., lies on  $H_q$ and 
has $\vert s^q \vert_{l^\infty}= \vert d^q \vert$, we get that the foregoing lower bound for the $l^\infty$-distance of points on $H_q$ from the origin 
is actually attained at the point $s^q$ and that this minimum distance is $\vert d^q \vert$. Let us keep these observations on record.\\

\noindent To discern the relationship between the coefficients defining a power series and the domain of convergence in more tangible terms, we now rephrase such relationships, (\ref{domcvgenrmlfrm}) being one such for instance, in terms of the support function rather than the defining function; while the defining function is a general tool to describe domains, the support function is a more convenient function specially adapted for convex domains. Let $g$ be some general power series $\sum c_J z^J$ with the logarithmic image of its domain of convergence $G_g$. Then, as we know $G_g=\{s \in \mathbb{R}^N
\; : \; \psi(s) <0\}$ with the defining function $\psi$ being given by
\begin{equation}\label{psidefn}
\psi(s)= \limsup\limits_{\vert J \vert \to \infty} \Big\{ \langle\frac{J}{\vert J \vert},s \rangle + \frac{1}{\vert J \vert} \log \vert c_J \vert \Big\}.
\end{equation}
Given any $\alpha \in PS_N$ pick any sequence $R^n= J^n/\vert J^n \vert \in PS\mathbb{Q}^N$ for some sequence $\{J^n\} \subset \mathbb{N}^N$, such that 
$R^n \to \alpha$ as $n \to \infty$. Then
\[
\psi(s) \geq \langle \alpha, s \rangle + \limsup\limits_{n \to \infty} \frac{\log \vert c_{J^n} \vert}{\vert J^n \vert }.
\]
Now, for all $s \in G_g$, $\psi(s) \leq 0$, so we must have
\[
\sup_{s \in G_g} \langle \alpha, s \rangle \leq - \limsup\limits_{n \to \infty}\big\{ \frac{\log \vert c_{J^n} \vert}{\vert J^n \vert } \big\}
= \liminf\limits_{n \to \infty} \{-\log \vert c_{J^n} \vert^{1/\vert J^n \vert}\}.
\]
This leads to the upper estimate for the support function  $h:= h_{G_g}$ of the convex domain $G_g$, given by
\begin{equation} \label{hupbd}
h(\alpha) \leq  - \limsup\limits_{n \to \infty} \frac{\log \vert c_{J^n} \vert}{\vert J^n \vert }
\end{equation}
with this being valid for all $\alpha \in PS_N$ and any sequence $\{J^n\} \subset \mathbb{N}^N$ with $J^n/\vert J^n \vert$ converging to $\alpha$ as $n \to \infty$. Stated differently, for every sequence $\{J^n\} \subset \mathbb{N}^N$ with $J^n / \vert J^n \vert$ being convergent to say $\alpha \in PS_N$, we have:
\begin{equation} \label{hupbdre}
-h(\alpha) \geq   \limsup\limits_{n \to \infty} \frac{\log \vert c_{J^n} \vert}{\vert J^n \vert }.
\end{equation} 
After passing to a subsequence to replace the limsup on the right by a limit, we may write
\[
 -h(\alpha) \geq \lim\limits_{n \to \infty} \frac{\log \vert c_{J^n} \vert}{\vert J^n \vert }.
\]
This means that every value assumed by $-h$ dominates some subsequential limit of $\log \vert c_J \vert/\vert J \vert$ leading us to the conclusion 
\begin{equation} \label{liminf}
\inf_{\alpha \in PS_N}\{ -h(\alpha)\} \geq  \liminf_{\vert J \vert \to \infty} \log \vert c_J \vert/\vert J \vert.
\end{equation}
\noindent Next suppose $\{K^n\} \subset \mathbb{N}^N$ is a sequence which achieves the limit supremum for the sequence $\log \vert c_J \vert/\vert J \vert$ i.e., 
$\log \vert c_{K^n} \vert/\vert K^n \vert$ is a convergent sequence with limit $\limsup(\log \vert c_J \vert/\vert J \vert)$. Then after passing to a 
subsequence of $\{K^n\}$ to assume $K^n/ \vert K^n \vert \to \gamma$ for some $\gamma \in PS_N$ and subsequently using (\ref{hupbdre}), we get
\begin{equation}
\limsup\limits_{\vert J \vert \to \infty} \log \vert c_J \vert/\vert J \vert \leq -h(\gamma) \leq \sup_{\alpha \in PS_N} \{ -h(\alpha)\}.
\end{equation}
On the other hand a lower bound may be obtained as follows. Pick {\it any} point  $s^0 \in \overline{G}_g$, recall (\ref{psidefn}) and write
\[
\limsup\limits_{\vert J \vert \to \infty} \langle \frac{J}{\vert J \vert}, s^0 \rangle + \limsup_{\vert J \vert \to \infty} 
\frac{\log \vert c_J \vert}{\vert J \vert} \geq \psi(s^0).
\]
As every subsequential limit of the countable collection of numbers $\{ \langle \frac{J}{\vert J \vert}, s^0 \rangle \; :\; J \in \mathbb{N}^N\}$ is of the form $\langle \alpha,s^0 \rangle$ for some $\al \in PS_N$, it follows that the left most term in the above, must be of the form
$\langle \beta, s^0 \rangle$ as well, for some $\beta \in PS_N$, so that we may write
\[
h(\beta) \geq \psi(s^0) - \limsup_{\vert J \vert \to \infty} \frac{\log \vert c_J \vert}{\vert J \vert}.
\]
As $s^0 \in \overline{G}_g$ was arbitrarily chosen, we may as well we might as well take $s^0$ to be on the boundary $\partial G_g$, to get the lower bound
\[
h(\beta) \geq - \limsup\limits_{\vert J \vert \to \infty} \frac{\log \vert c_J \vert}{\vert J \vert}.
\]
Now, rewrite this as:
\begin{equation} \label{exist}
\limsup\limits_{\vert J \vert \to \infty} \frac{\log \vert c_J \vert}{\vert J \vert} \geq -h(\beta)
\end{equation}
to subsequently derive from this, the lower bound:
\begin{equation} \label{lowerbd}
\limsup\limits_{\vert J \vert \to \infty} \frac{\log \vert c_J \vert}{\vert J \vert} \geq \inf_{\alpha \in PS_N} \{ -h(\alpha) \}.
\end{equation}
Now, (\ref{hupbdre}) and (\ref{exist}) together indicate the possibility that every value in the range of $-h$ can be realized as a subsequential limit of the sequence 
$\log \vert c_J \vert ^{1/ \vert J \vert}$. Indeed this is true: to this end, begin with the following rephrased version of (\ref{domcvgenrmlfrm}):
\[
G_g = \bigcap _{\alpha \in PS_N} \big\{ \langle \alpha, s \rangle +
 \limsup\limits_{ \{J^n\} \in S_{\alpha}} \frac{\log \vert c_{J^n} \vert}{\vert J^n \vert} <0 \big\}
 \]
where $S_{\alpha}$ is the set of all sequences $\{J^n\}$ in $\mathbb{N}^N$ with $J^n/\vert J^n \vert \to \alpha$. On the other hand, if $h$ is the support function of the convex domain $G_g$, we may write
\[
G_g = \bigcap_{\alpha \in PS_N} \big\{ \langle \alpha, s \rangle  - h(\alpha) <0 \big\}.
\]
Comparing the foregoing pair of representations of $G_g$, using the basic fact that for any convex domain, there can be at most one supporting hyperplane 
with a given gradient, we conclude that: for every $\alpha \in PS_N$,
\begin{equation}\label{Main}
h(\alpha) =  -\limsup\limits_{ \{J^n\} \in S_{\alpha}} \frac{\log \vert c_{J^n} \vert}{\vert J^n \vert}.
\end{equation}
Thus, just as we have a formula connecting the coefficients of a power series $g$ and the defining function of the logarithmic image $G_g$ of its domain of convergence, we have a similar one linking it to the support function of $G_g$, as well. By picking a suitable sequence $\{J^n\} \subset \mathbb{N}^N$ then, we may write
\begin{equation} \label{rangelimit}
h(\alpha) = \lim\limits_{n \to \infty}\big\{ -\log \vert c_{J^n} \vert^{1/\vert J^n \vert}\big\}
\end{equation}
where we are interested mainly in those $\alpha$ which lie in $PS_h = \{ \alpha \; : \; h(\alpha) \textrm{ is finite} \}$. In short, $h(\alpha)$ is a subsequential limit of $-\log \vert c_{J} \vert^{1/\vert J \vert }$, allowing us to finally conclude that the range of $h$ in $\mathbb{R}$ is contained in the set of all finite subsequential limits of the countable set of numbers:
\[
\big\{ -\log \vert c_{J} \vert^{1/\vert J \vert } \; : \; J \in \mathbb{N}^N \big\}.
\]
As every convex domain is characterized completely by its support function, it follows from (\ref{Main}) that: for any given convex domain $G$ with support function $h$, the coefficients of every power series $\sum c_J z^J$ which converges precisely on the domain $\lambda^{-1}(G)$, must satisfy (\ref{Main}) or equivalently
the following analogue of the Cauchy -- Hadamard formula for the radius of (the polydiscs of) convergence:
\begin{equation} \label{CHseveral}
e^{h(\alpha)} = \frac{1}{\limsup\limits_{\{J^n\} \in S_{\alpha}} \big\{ \vert c_{J^n} \vert^{1/\vert J^n \vert} \big\}},
\end{equation}
for each $\alpha \in PS_h$ (in fact, for all $\alpha \in PS_N$). Indeed, this formula gives the radius of convergence for any of the $\alpha$-constituents of our power series, where by an $\alpha$-constituent or $\alpha$-strand of our generic power series $\sum c_J z^J$ we mean any one of its sub-series given by
\[
\sum\limits_{n=1}^{\infty} c_{J^n} z_1^{J^n_1} \ldots z_N^{J^n_N}
\]
with $\{J^n\} \subset \mathbb{N}^N$ satisfying $J^n/ \vert J^n \vert \to \alpha$.\\

\noindent As logarithm is an increasing (=order-preserving) function, (\ref{CHseveral}) now leads to the result that the support function of the 
logarithmic image of the domain of convergence of any power series, can at least in
principle be completely determined from the coefficients through the formula:
\[
-h(\alpha) = \limsup\limits_{\{J^n\} \in S_{\alpha}} \big\{ \frac{\log\vert c_{J^n} \vert}{\vert J^n \vert} \big\}.
\]
As this holds for all $\alpha \in PS_N$, 
\[
 \limsup_{\vert J \vert \to \infty} \frac{ \log \vert c_J \vert}{\vert J \vert} 
\leq \sup_{\alpha \in PS_N} \{ -h(\alpha) \}
\]
Getting back now to (\ref{lowerbd}), we see that we have
\[
\inf_{\alpha \in PS_N}\{ -h(\alpha)\} \leq  \limsup_{\vert J \vert \to \infty} \frac{ \log \vert c_J \vert}{\vert J \vert} 
\leq \sup_{\alpha \in PS_N} \{ -h(\alpha) \}
\]
Combining this with (\ref{liminf}), we may therefore  write
\begin{equation}\label{interconlc}
\liminf_{\vert J \vert \to \infty} \frac{ \log \vert c_J \vert}{\vert J \vert}  \leq \inf_{\alpha \in PS_N}\{ -h(\alpha)\} \leq 
\limsup_{\vert J \vert \to \infty} \frac{\log \vert  c_J \vert}{\vert J \vert} \leq \sup_{\alpha \in PS_N} \{ -h(\alpha) \}.\\
\end{equation}
Now, recall our observation at (\ref{rangelimit}) that, every member in the range of $-h$ is actually a subsequential limit of $\log \vert c_J \vert/\vert J \vert$; this gives
\begin{align*}
\sup\limits_{\alpha \in PS_N} \{-h(\alpha)\}  &\leq \limsup\limits_{\vert J \vert \to \infty} \frac{\log \vert c_J \vert}{\vert J \vert} \text{ and }\\
\inf\limits_{\alpha \in PS_N} \{-h(\alpha)\}  &\geq \liminf\limits_{\vert J \vert \to \infty} \frac{\log \vert c_J \vert}{\vert J \vert} 
\end{align*}
While the second inequality here is one that we already know, the former when combined with (\ref{interconlc}), gives in conclusion:
\begin{align*}
\limsup\limits_{\vert J \vert \to \infty} \log \vert c_J \vert^{1/\vert J \vert} &= \sup_{\alpha \in PS_N} \{-h(\alpha)\}.
\end{align*}
As $h$ is a convex function, its range is an interval of the extended real line $\overline{\mathbb{R}}$. In conclusion, we therefore have that the   subsequential limits of $-\log \vert c_J \vert/\vert J \vert$ do not shoot above the range of the support function: $[\inf_{\alpha \in PS_N} \{h(\alpha)\} , \sup_{\alpha \in PS_N} \{h(\alpha)\}]$ while the set of all such limits contains this interval. We remark in passing that  since the range of $h$ may well be an infinite interval despite $G$ not being the whole space, it is 
 (\ref{CHseveral}) which will be more useful in practice.\\ 

\noindent Before proceeding to construct a power series which converges precisely on any given logarithmically convex multicircular domain $D$, let us take a look at two special cases: one when $D$ is the unit polydisc and another when $D$ is the pull-back of a half space under the map $\lambda$. For the former, the geometric series $\sum z^J$ which involves every monomial, converges precisely on the open unit polydisc (even though it {\it can} be analytically continued to a larger domain) whose support function is finite throughout $PS_N$. For the latter on the other hand, we may consider the power series
$\sum c_k z^{kJ}$ for some fixed $J \in \mathbb{N}^N$ whose domain of convergence has its logarithmic image $G_J$, determined as the limit infimum of half-spaces given by:
\begin{multline}
\liminf\limits_{k \to \infty}\Big\{ s \; : \; \frac{kj_1s_1+ \ldots + kj_Ns_N}{k(j_1+\ldots+j_N)} + \frac{\log \vert c_k \vert}{k(j_1+\ldots+j_N)}<0 \Big\}\\
=\liminf\limits_{k \to \infty}\Big\{ s \; : \; \frac{j_1s_1+ \ldots + j_Ns_N}{\vert J \vert} + \frac{1}{\vert J \vert}
\frac{\log \vert c_k \vert}{k}  <0 \Big\}\\
= \Big\{ s \in \mathbb{R}^N\; : \; \langle J, s \rangle + \limsup\limits_{k \to \infty} \log \vert c_k \vert^{1/k}    <0 \Big\}
\end{multline}
which is a single half-space obtained by translating the ortho-complement of $J$, by a distance $\limsup\limits_{k \to \infty} \log \vert c_k \vert/k$ in the direction opposite to $J$, unless the limsup in the above is infinite; if this limsup is $+\infty$ the half-space reduces to the null set whereas if this limsup happens to be $-\infty$, the domain of convergence becomes the whole space. The support function of  a half-space is finite precisely at a single point of $PS_N$ and for the above one, at $J /\vert J \vert \in PS\mathbb{Q}^N$. Indeed, if $h$ denotes the support function of the above half-space, then $h(J /\vert J \vert) = -1 /\vert J \vert \; \limsup\limits_{k \to \infty} \log \vert c_k \vert^{1/k}$.\\

\noindent Now, we may wish to write any general power series $\sum c_J z^J$ as a sum of series of the type just mentioned:
\[
g(z) = \sum\limits_{J \in \mathcal{P}}\Big( \sum\limits_{k=0}^{\infty} c_{kJ}z^{kJ} \Big)
\]
where $\mathcal{P}$ is the set of all $N$-tuples $J$ of positive integers whose greatest common divisor is one. This representation is supported by the absolute convergence of the power series on its domain of convergence $D$. Let $G_g = \lambda(D)$ denote the logarithmic image of $D$. As noted above, for each $J \in \mathcal{P}$ fixed, the logarithmic image $H_J^g$, of the domain of convergence of $\sum_{k=0}^{\infty} c_{kJ}z^{k J}$, is a half-space 
which may be {\it the whole space} $\mathbb{R}^N$ or empty as well. In fact, it may very well happen that every $H_J^g$ is the whole space $\mathbb{R}^N$, while $G_g$ is far from being so; this can be reconciled with the possibility that the set of points where the support function of $G_g$ is finite avoids all of the rational points of $PS_N$. On the other hand, if $H_J^g= \emptyset$ even for a single value of $J$, then 
$\limsup_{k \to \infty} \log \vert c_{kJ} \vert^{1/k}$ must be $+\infty$, which in turn means that the series representing $g(z)$ does not converge at any point $z$; so, $D = \emptyset$ as well. To address the more general question: how is the domain of convergence of $g$ related to these half-spaces $H_J^g$? briefly, 
suppose $\lambda(z) \in G_g$; then for all $J \in \mathcal{P}$ we have that
$\lambda(z)$ lies in the logarithmic image of the domain of convergence  of each $f_J$, of the form:
\[
f_J = \sum\limits_{k=0}^{\infty}c_{kJ}z^{kJ}.
\]
Thus, $\lambda(z)$ belongs to  $\cap H_J^g$ or in other words, 
\[
 G_g \subset \bigcap_{ R \in PS\mathbb{Q}^N} H_R^g.
\]
with
$H_R^g = \{s \in \mathbb{R}^N \; : \;  \langle R, s \rangle + \limsup_{k \to\infty} \log \vert c_{kJ} \vert^{1/\vert k \vert} <0\}$ where $R \in PS\mathbb{Q}^N$ is expressed as $J/\vert J \vert$.  We wish ofcourse to know whether this inclusion can be improved to a better estimate, first of all an equality. The foregoing set-theoretic upper bound on $G_g$, may be totally useless because this inclusion may be far from equality, for instance when $H_J^g = \mathbb{R}^N$ for all $J$, as mentioned above -- it is not difficult to conjure up examples when this takes place and in the forthcoming, we will see methods to do so; for now, consider for instance, the possibility of the domain of convergence of a power series $g$ of two complex variables, being such that its
logarithmic image in $\mathbb{R}^2$ is a half-space whose boundary is a line of `irrational slope' i. e., with gradient vector 
$(1,\alpha)$ for an irrational real number $\alpha$. On the one hand, we have the foregone equality
\[
G_g = \bigcap\limits_{\alpha \in PS_N}H_\alpha^g
\]
where
$H_\alpha^g = \{ s \in \mathbb{R}^N \; : \;  \langle \alpha, s \rangle - h(\alpha) <0\}$ with
$h$ being the {\it support function} of $G_g$. On the other hand, this equality does not immediately serve our purpose, as the intersection 
here is not countable; we shall redress this problem next -- what we are seeking here, is a procedure to cast any given power series $g$ as a sum of sub-series each with its
logarithmic image of its domain of convergence being a half-space and such that the intersection of these half-spaces yields $G_g$. Actually, 
it is enough if we can recover $G_g$ from the knowledge of these half-spaces by some tangible set-theoretic operation, not necessarily an intersection;
infact, the operation of limit infimum for sets, is the one which comes up in this context. The key point here is that while the indexing set for our half-spaces must be a countable (dense) collection of vectors from $PS_N$, it need not be $PS\mathbb{Q}^N$. Subsequently therefore, we shall shift our considerations a bit, to starting with arbitrary countable dense subsets of $PS_N$.  \\

\noindent Let $G$ be any convex domain in $\mathbb{R}^N$ with support function $h=h_G$. The effective domain of $h$ is the subset of those points of the domain of $h$ where the support function is finite. We shall refer to the subset of the effective domain $h$,
given by
\[
PS_h = \{ \alpha \in S_N \; : \; h(\alpha) \text{ is finite} \},
\]
as the {\it normalized domain}  (or normalized effective domain) of $h$, which is actually contained in $PS_N$, owing to the completeness of the given multicircular domain $D$, as noted earlier.
Let $\mathcal{C} =\{\alpha^n \}$ be an arbitrary countable dense subset of $PS_N$. Pick a sequence $\{J^{1k} \} \subset \mathbb{N}^N$ with 
\[
\Big( \frac{J^{1k}_1}{\vert J^{1k} \vert}, \ldots, \frac{J^{1k}_N}{\vert J^{1k} \vert} \Big) \to (\alpha^1_1, \ldots, \alpha^1_N),
\]
as $k \to \infty$ -- it is trivial to see that such a sequence exists. Next, pick a sequence $\{ J^{2k}\}$ this time in $\mathbb{N}^N \setminus \{J^{1k} \}$ such that
\[
\Big( \frac{J^{2k}_1}{\vert J^{2k} \vert}, \ldots, \frac{J^{2k}_N}{\vert J^{2k} \vert} \Big) \to (\alpha^2_1, \ldots, \alpha^2_N).
\]
Such a sequence exits, as $\pi\big( \mathbb{N}^N \setminus \{J^{1k} \} \big)$ is dense in $PS_N \setminus \{\alpha^1\}$, where 
$\pi(z)=z/\vert z \vert_{l^1}$. After $l$ steps, we would have sequences $\{J^{lk} \; : \; k \in \mathbb{N}^N\}$ such that for any $m \leq l$, we have 
\[
\{ J^{mk}\} \subset  \mathbb{N}^N \setminus \bigcup\limits_{i=1}^{m-1}\{J^{ik}:k \in \mathbb{N}\}
\]
and $J^{mk}/ \vert J^{mk} \vert \to \alpha^m$ as $k \to \infty$. Set $R^n_k = J^{nk}/ \vert J^{nk} \vert$. \\

\noindent Keeping the notations as in the foregoing para, let $g(z) = \sum c_J z^J$ be a power series with its domain of convergence $D$ and $\lambda(D)=G$ with support function $h$. We shall re-express the series $g$ as a sum indexed essentially by any chosen countable dense subset $\mathcal{C}$ drawn out of the normalized effective domain $PS_h$ of the support function. On the one hand, $PS_h$ may fail to have any rational points in particular, the support function may fail to be finite on integral points; on the other hand, the standard indexing of power series is through the standard positive integral lattice. In order to pass to the desired rearranged sum, we first set up approximating sequences for our chosen $\mathcal{C}$ drawn from $PS\mathbb{Q}^N$ as in the foregoing para. We may pick out a strand (=sub-series) of terms interspersed in $g$, corresponding to each such subsequence. Thereafter, 
look upon the series $g$, as an interlaced sum of such strands. More simply put, re-express $g$ in the following form 
\begin{equation}\label{recast}
\sum\limits_{n} \sum \limits_{k} c_{kn}z_1^{l_{kn}R^n_{k1}} z_2^{l_{kn}R^n_{k2}} \ldots z_N^{l_{kn}R^n_{kN}} + \text{ the remaining terms of }g
\end{equation}
with $l_{kn} = \vert J^{nk} \vert$ and $c_{kn} := c_{J^{nk}}$; the ordering of the `remaining terms' in the above, can be 
ignored by the absolute convergence of $g$ on $D$. We do not claim any uniqueness of the above expression of $g$ and infact, the `remaining terms' may be ignored altogether, because the values of the support function on the subset $PS_h$ (of $PS_N$) where it is finite, gets determined as follows: firstly, on the chosen countable dense subset $\mathcal{C}$ by the asymptotic behaviour of the coefficients 
of $g$ via the formula (\ref{CHseveral}):
\[
h(\alpha^n) = -\limsup\limits_{k \to \infty} \log \vert c_{kn} \vert^{1/\vert J^{nk} \vert},
\]
which subsequently, determines by continuity, the values of $h$ on all points of the relative interior of $PS_h$. As these values suffice to determine the 
convex domain $G$, this explains in what sense, we may ignore the `remaining terms', mentioned above. We have recast the power series $g$ as in (\ref{recast}) to peel-off information from various strands \footnote{A strand here means an infinite subset of the collection of coefficients; more precisely herein, one out of the infinitely many disjoint infinite subsets of the coefficients, each indexed by one of the sequences $\{J^{nk}: k \in \mathbb{N}\}$.} of coefficients of $g$ about the support function $h$ of its domain of convergence: (\ref{recast}) regroups $g$ a sum of its 
various $\alpha^n$-strands and it is this organization of its terms,  which splits up neatly to make apparent the links between the coefficients occurring in the various sections of the series $g$ and the geometry of its domain of convergence.
In conclusion, we thus observe here, how all power series arise `essentially' in the same manner: the `essential' limits being determined by  a convex domain in $\mathbb{R}^N$ through its support function and a countable dense subset of the
normalized domain of the support function.\\
 
\noindent A simple choice for getting a concrete/explicit power series converging precisely on a given log-convex Reinhardt $D$, now presents itself: take $c_{kn}$ such that $\vert c_{kn} \vert^{1/l_{kn}} = e^{-h(\alpha^n)}$. To substantiate a bit more explicitly why this surmise may work, we first observe that the problem of constructing a power series which converges precisely on the prescribed domain $D$, is equivalent to the geometric problem of expressing its logarithmic image $G=\lambda(D)$ as the limit infimum of a sequence of half-spaces whose bounding hyperplanes have their gradient vectors from $PS\mathbb{Q}^N$ and converge to a `dense' collection of supporting hyperplanes for 
the convex domain $G$. The gradient vectors of the supporting hyperplanes need not belong to $PS\mathbb{Q}^N$ at all; the foregoing prelude-para was to 
address this issue.  
So now, we choose a countable `dense' collection of supporting hyperplanes 
for the logarithmic image $G$ of our given domain, with the property that their (affine) defining functions all have 
gradient vectors whose components are all rational (and in $PS_N$); indeed, to be more careful and correct, make the choice such that the gradient vectors of the aforementioned  half-spaces,  are in the above notation, of the form $R_k^n = J^{nk}/l_{kn}$ 
where $l_{kn} = \vert J^{nk} \vert$ -- in particular therefore vectors from $PS\mathbb{Q}^N$.  
In view of the experience gathered beginning from (\ref{domcvgenrmlfrm}), we may surmise that: the constant terms in the defining functions of the above collection of supporting hyperplanes to $G=\lambda(D)$,
would conceivably -- a rigorous presentation is forthcoming -- yield the coefficients of a power series convergent on $D$. As these constant terms ought to be the values of the support function $h$ for $G$ on a countable dense subset (consisting of the limits of $R_k^n$) of $PS_N$, we may move higher in the ladder of precision. Keeping choices simple, the upshot is that we are led to consider the coefficients as in
the aforementioned prescription: take the coefficient of the monomial $z^{J^{nk}}$ to be $c_{kn} = e^{-l_{kn}h(\alpha^n)}$ with $\alpha^n$ being as in foregoing para, namely the limit of $R_k^n$ as $k \to \infty$. 
The resulting power series ought to work by the following geometric reasoning: as $h(\alpha^n)$ is the distance in the $l^\infty$-metric from the origin to the supporting hyperplane for $G$ with gradient $\alpha^n$ (this was recorded elaborately much earlier as well), it ought to follow that the half-spaces defined by affine functions with gradients $R_k^n$ and with constant terms $c_{kn}$, being close to the supporting half-spaces, must yield the domain
$G$ upon passing to a (suitable) limit; that this indeed does follow is what is demonstrated next.\\

\noindent To work out the aforementioned strategy rigorously, pick a countable dense subset out of the set of all supporting hyperplanes for $G$. Indeed, this may be done by considering hyperplanes defined by affine functions of the form
\[
A_n(s):= \langle \alpha^n, s \rangle - h(\alpha^n) 
\]
where $h$ is the support function of the convex set $G$ and $\{\alpha^n\}$ is  any countable dense subset of $PS_h$. Let us mention in passing that it may well happen that $PS_h$ is just a singleton; indeed, it will be instructive to keep the following example in mind: any complete multicircular domain in $\mathbb{C}^2$ the boundary of whose logarithmic image is a line. Next, the convexity of $G$ and hence of the support function $h$ (and subsequently the continuity of its restriction to $PS_h$ following from the sub-linearity of the support function), forces $G$ to equal the countable intersection of the half spaces $\{ s\in \mathbb{R}^N \; : \; A_n(s)<0\}$. Next, for each $\alpha^n$, choose a sequence $R_j^n$ from $PS\mathbb{Q}^N$ which, as $j \to \infty$, converges to $(\alpha_1^n, \ldots, \alpha^n_N)$ . Then, consider the power series
\begin{equation} \label{splpowseries1}
f(z)= \sum\limits_{j,n \in \mathbb{N}} c_{jn} z_1^{k_{jn} R^n_{j1}} z_2^{k_{jn} R^n_{j2}} \ldots z_N^{k_{jn} R^n_{jN}}
\end{equation}
where $c_{jn}= e^{-k_{jn}h(\alpha^n)}$ with $k_{jn}$ being the least common multiple of the (+ve) denominators occurring in the reduced representation of the rational numbers $\{R_{j1}^n, \ldots, R_{jN}^n\}$. Now, the logarithmic image of the domain of convergence of the power series $f$, which we will denote by $G_f$, can be written using (\ref{domcvgenrmlfrm}) as:
\begin{align*}
&\Big\{ s \in \mathbb{R}^N\; : \; \limsup_{j,n \in \mathbb{N}} 
\Big(\frac{k_{jn} \langle R_j^n,s \rangle + \log \vert e^{-k_{jn} h(\alpha^n)} \vert}{k_{jn}} \Big)<0 \Big \} \\
&=\liminf_{j,n \in \mathbb{N}} \{s \in \mathbb{R}^N \; : \; \langle R_j^n, s\rangle - h(\alpha^n)<0\}
\end{align*}
Thus $G_f$ is the limit infimum of half spaces $H_j^n$ defined by
$B_j^n(s) = \langle R_j^n,s \rangle - h(\alpha^n)$. We wish to compare this representation of $G_f$ with the representation of $G$ as the intersection of half-spaces given by
\begin{equation}\label{Grep}
G= \bigcap\limits_{n \in \mathbb{N}}  \{s \in \mathbb{R}^N  \; :\; A_n(s) <0 \}
\end{equation}
Indeed, to establish the claim that the domain of convergence of $f$ is precisely $G$ or in other words, to show the equality of domains: $G_f=G$, we proceed as follows. Pick any $s^0\in G$.
So $s^0$ belongs to every of the half-spaces appearing on the right of (\ref{Grep}); so $\langle \alpha^n, s^0\rangle-h(\alpha^n)$ is negative. We need to look at
\[
B_j^n(s^0) = \langle R_j^n, s^0 \rangle - h(\alpha^n) = \langle R_j^n - \alpha^n, s^0\rangle + \big(\langle \alpha^n, s^0 \rangle-h(\alpha^n)\big)
\]
Depending on $s^0$ and $n$, choose $j(n, s^0) \in \mathbb{N}$ large enough for $R_j^n$ to be so close to $\alpha^n$ that the second term at the right-most, is bigger in magnitude than its preceding term; more precisely, the `close'-ness and the choice of $j(n, s^0)$ may be made by the following estimation:
\[
\vert \langle R_j^n - \alpha^n, s^0 \rangle \vert \leq \vert R_j^n - \alpha^n \vert \vert s^0 \vert 
< \vert \langle \alpha^n, s^0 \rangle - h(\alpha^n) \vert
\]
which holds for all $j > j(n, s^0)$. This ensures that $\langle R_j^n, s^0 \rangle - h(\alpha^n)$ is negative whenever $j > j(n, s^0)$. However, we cannot immediately claim that $s^0$ lies in all but finitely many of the half-spaces $H_j^n$  so as to conclude that $s^0$ belongs to their limit infimum, $G_f$.
This will follow if we can remove the dependence of $j(n, s^0)$ on $n$. Indeed, it suffices to verify that 
$\vert \langle \alpha^n,s^0 \rangle - h(\alpha^n)\vert$ can be bounded below by a positive constant independent of $n$, for we may always choose the rate of convergence of $R_j^n \to \alpha^n$, to be independent of $n$ -- for instance, we may choose $R_j^n$ so that $\vert R_j^n - \alpha^n \vert<1/j$.  To achieve the desired lower bound, notice first that 
$\langle \alpha^n,s^0 \rangle - h(\alpha^n)$ has a geometric meaning: it is the distance from $s^0$ to the supporting hyperplane $H_\alpha$ for $G$ of gradient $\alpha$, upto a factor of the length of $\alpha$. To be precise and to proceed further, let $s^1$ denote the point where the perpendicular from $s^0$ on the supporting hyperplane  $H_\alpha$ cuts the boundary $\partial G$ -- both the existence and uniqueness of such a point $s^1$ follows from the convexity of $G$; indeed to indicate the key point in the reasoning here,  $H_\alpha$ is contained in the complement of $G$ while both $s^0$ and $\partial G$ are contained in the same one of the
(closed) half spaces determined by $H_\alpha$. 
\begin{figure}
  \includegraphics[scale=0.25]{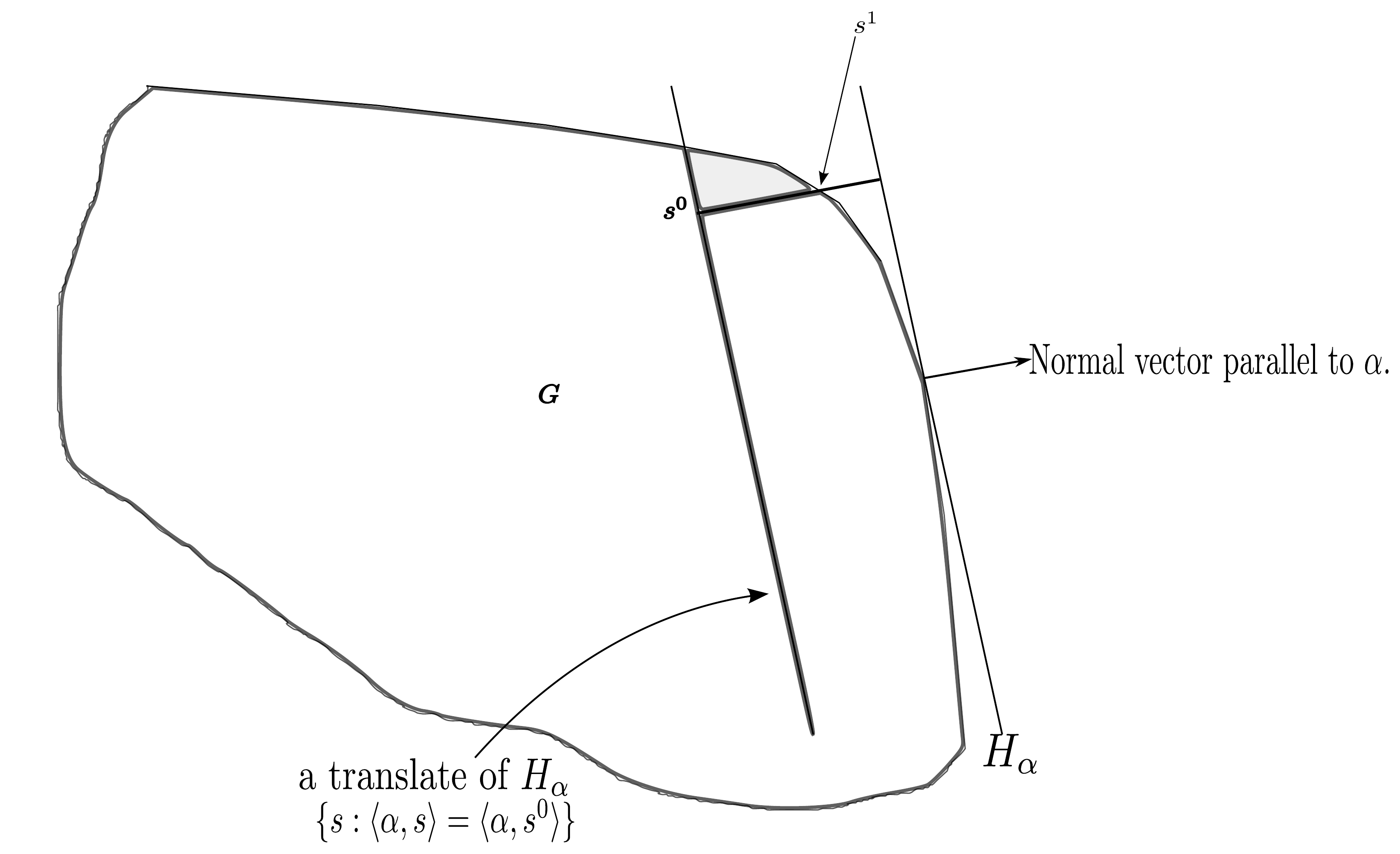}
  \caption{}
  \label{fig: hyperdist}
  \end{figure}
\noindent An illustrative figure such as figure \ref{fig: hyperdist}, convinces us that the distance between $H_\alpha$ and the hyperplane of gradient 
$\alpha$ passing through $s^0$, satisfies the following lower bound: 
\[
\vert \langle \alpha^n,s^0 \rangle - h(\alpha^n) \vert \geq \frac{1}{\vert \alpha^n \vert_{l^2}} {\rm dist}(s^0, s^1).
\]
Recalling that $\alpha^n \in PS_N$ and that $\vert \cdot \vert_{l^2} \leq \vert \cdot \vert_{l^1}$, renders the desired independence of $n$ in the lower bound:
\[
\langle \alpha^n,s^0 \rangle - h(\alpha^n) \geq {\rm dist}(s^0, \partial G).
\]
As noted before this enables us to drop the dependence of $j(n,s^0)$ in the above, which we shall now write as $j(s^0)$. This ensures that for all $n$, $\langle R_j^n, s^0 \rangle - h(\alpha^n)$ is negative except possibly when $j \leq j(s^0)$. Thus, $s^0$ lies in all but finitely many of the halfspaces 
$H_j^n$ whose limit infimum is $G_f$. This is exactly the requirement for $s^0$ to belong to this limit infimum. Thus, $G_f \subset G$.\\

\noindent To obtain the reverse inclusion start again with a point $s^0$, this time in $G_f$. Then, for all but finitely many values of the indices $(j,n)$, we must have
\[
\langle R_j^n, s^0 \rangle -h(\alpha^n)  <0. 
\]
Let \[
\mathcal{R} = \mathcal{R}_G= \{ R_j^n \; :\; \langle R_j^n, s^0 \rangle -h(\alpha^n) \textrm{ is negative}\},
\]
 which differs from the set of all $R_j^n$'s, only by a finite set. Then $\{ \alpha^n \; : \;  n\in \mathbb{N}\}$ is contained in the closure 
 of $\mathcal{R}$. For each $n$, the continuous function
 \[
 \langle \cdot, s^0 \rangle - h(\alpha^n)
\]
 is (finite and) negative on $\mathcal{R}$ and therefore non-positive on $\overline{\mathcal{R}}$. Therefore for every $n \in \mathbb{N}$,
 $\langle \alpha^n, s^0 \rangle - h(\alpha^n)$ must be non-positive. This means that $s^0 \in \overline{G}$ and
  subsequently that $G_f \subset \overline{G}$. As $G_f$ contains $G$ and is open in $\mathbb{C}^n$, it follows that $G_f=G$. \\
  
\noindent While we have established that every logarithmically convex complete Reinhardt domain is the domain of convergence of some power series,
the foregoing considerations do not in anyway mean that to obtain such a domain, all one has to do is to merely take the pull back 
of any arbitrary convex domain in $\mathbb{R}^N$ via the logarithmic map and thereafter by the absolute mapping.
What are the characterizing properties to be possessed by a convex domain $G \subset \mathbb{R}^N$
in order for it to qualify to be the logarithmic image
of the domain of convergence of some power series? 
We seem to be confronted with finding out a way to decide by `looking' at a given convex domain,
if it is indeed the logarithmic image of the domain of convergence of some power series.  We digress a bit for the sake of 
refining our geometric understanding of domains of convergence. 
Logarithmic convexity may not be a property as intuitive as standard geometric convexity; nevertheless, let us not be amiss to note certain 
easy consequential visible properties common to all domains of convergence of power series; for instance: all of them are topologically trivial i.e., are contractible domains. While contractibility alone need not necessarily imply topological equivalence with the ball in general, 
their linear contractibility i.e., starlikeness does. We refer the reader to the last section \ref{last} for a proof.
Consideration of such domains combined with their boundaries will be important as well; in this connection, 
we first remark that the logarithmic images of each such domain has the property that it's
closure is the epigraph of a convex function on the hyperplane  
$\partial \lambda(H) = \{s \in \mathbb{R}^N \; : \; s_1 + \ldots + s_N = 0\}$ where 
$H$ is the domain in $\mathbb{C}^N$ given by $\{ z \; : \; \vert z_1 z_2 \ldots z_N \vert <1\}$. To see this,
recall that a logarithmically convex complete Reinhardt domain $D$ being a star-like domain has associated to it a pair of functions namely, the radial function $R$ and its reciprocal gauge function. It follows from the observations made at around (\ref{-logR}), that $-\log R \circ {\rm Exp}(s)$ 
provides a convex defining function for the domain $G$.   
Further contemplation convinces us that 
$\overline{G}$ must be an unbounded convex body whose boundary is homeomorphic to $\mathbb{R}^{N-1}$. Indeed, $\overline{G}$
can be realized as the epigraph of a convex function on the hyperplane with gradient vector $(1,1,\ldots,1)$ through the origin 
and with $\partial G$ being the graph of such a function. 
This function
is determined by the one-to-one correspondence $\partial \lambda(H)  \to \partial G$
set up by the 
composition of the following maps: first apply
${\rm Exp}$, thereafter $r \to r/\vert r \vert_{l^1}$ followed by
$R(z)z = R(\vert z \vert) \vert z \vert = R(r)r$ which lies in $\tau(\partial D)$ and then finally the mapping $\lambda$. 
This explicitly takes the form
\[
s \to \big(s_1, \ldots,s_N\big) -\log R \big( {\rm Exp}(s)\big) (-1, \ldots,-1)
\]
which means that $\partial G$ is the graph of the function $-\log R \circ {\rm Exp}$ on the hyperplane $\partial \lambda(H)$. 
Seen differently, the 
absolute image of any such domain can be realized as the graph of an extended-real valued function over the probability simplex through the `radial 
function' available for any star-like domain, as indicated in figure \ref{fig:topequiv}. \\

\begin{figure}[h]
  \includegraphics[scale=0.25]{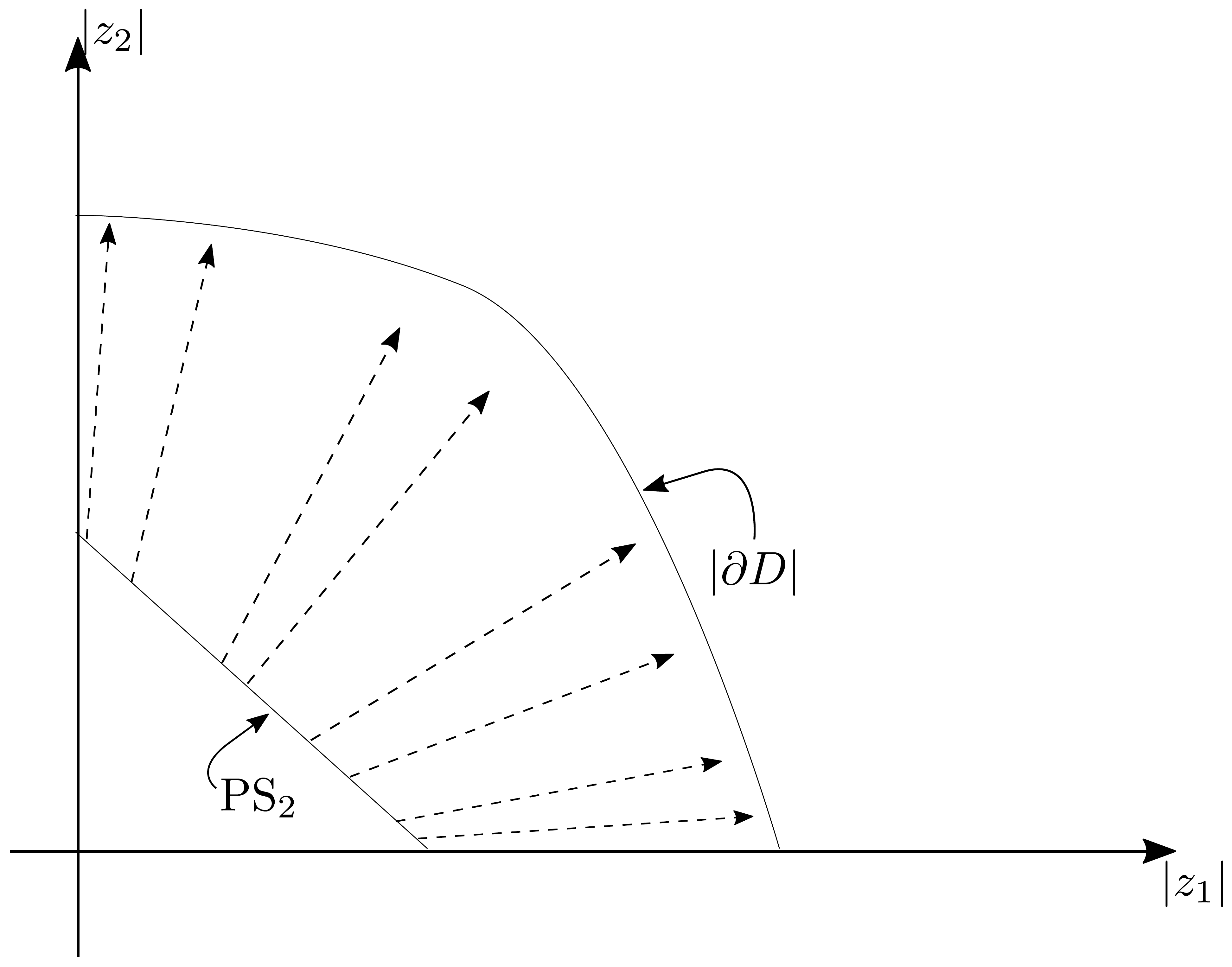}
  \caption{Illustrating the correspondence between the probability simplex and the absolute image of the boundary of a complete Reinhardt domain, when the domain is bounded.}
  \label{fig:topequiv}
\end{figure}

\noindent This common feature leads to domains of convergence of
power series, becoming mutually topologically equivalent provided only that we add points 
at infinity to those among them which are unbounded. To be more precise, it turns out that while starlikeness of a bounded domain alone need not necessarily ensure its clousure to be topologically equivalent to the closed ball, the fact that a domain $D$ of convergence of some power series, say with $D$ bounded for simplicity,
is {\it complete Reinhardt} ensures that its closure $\overline{D}$ is homeomorphic to the closed ball $\overline{\mathbb{B}}^N$. 
However, this does not mean that domains of convergence of power series are holomorphically equivalent to the ball and thereby to each other. Indeed, two of the 
simplest 
logarithmically convex complete Reinhardt domains (which is to say, domains of convergence of power series) namely, the polydisc $\mathbb{U}^N$
whose logarithmic image has its support function finite-valued on all of $PS_N$ and the unbounded domain $H=\{ z \in \mathbb{C}^N \; :\; \vert z_1 \ldots z_N \vert <1\}$  obtained as the inverse image of a half-space under the logarithmic map $\lambda$ with the support function of $H$ finite only at a single point of $PS_N$, are biholomorphically inequivalent.
One way to see this non-equivalence of $H$ with $\mathbb{B}^N$ or $\mathbb{U}^N$ is via a theorem due to H. Cartan about biholomorphic mappings between circular domains, in conjunction with the fact that the automorphism group of $\mathbb{B}^N$ or $\mathbb{U}^N$ act transitively on their respective domains. This failure of the Riemann mapping theorem of complex analysis in dimension one, persists even if we restrict ourselves to bounded domains of convergence of power series in any higher dimension. Indeed, two of the simplest topologically trivial bounded domains $\mathbb{B}^N$ and $\mathbb{U}^N$ are not
biholomorphically equivalent. Cartan's methodology towards establishing these inequivalences, is an excellent example of how the local power series representation of holomorphic functions suffices to provide an elementary and (yet!) neat proof of, the just alluded to failure, of the Riemann mapping theorem in every dimension $N>1$. We mention in passing, as a matter of (a non-trivial!) fact that any pair of such domains (of convergence of some power series) will generically fail to be biholomorphically equivalent.\\

\noindent Let us get back from the detour about gaining an understanding of the topology of domains of convergence, 
to our question: what are the characterizing features of the logarithmic images of domains of convergence of power series?
A little contemplation 
will help convince that what is 
required here is, to determine a condition to be imposed on the given convex domain $G \subset \mathbb{R}^N$
which ensures the completeness of $\lambda^{-1}(G)$, without involving its pull back into the absolute space. An answer to this requirement 
here would be that the 
characteristic/recession cone of $G$ contain the standard backward cone $(-\mathbb{R}_+)^N$. This means that the domain $G$ is unbounded in $N$-many 
independent directions; to be precise, in every direction of $(-\mathbb{R}_+)^N$. 
While what constitutes a satisfactory answer to our question at hand is subjective,  saying that the characteristic cone contain $(-\mathbb{R}_+)^N$
cannot be considered satisfactory; for, one is often `given' a domain by its defining function and it remains therefore to 
figure out ways to find out the characteristic cone from the defining function. Instead of taking up this task in all its generality,
we shall directly lay down the condition in our setting. Namely, given a convex domain $G \subset \mathbb{R}^N$ with defining function $\psi$,
write down a condition to decide if the characteristic cone of $G= \{ s \; : \; \psi(s)<0 \}$ contains $(-\mathbb{R}_+)^N$. As this is
easily seen to be equivalent to requiring that the gradient vector of $\psi$ at all boundary points of $G$
i.e., the outer normal vector field along $\partial G$ points into the standard cone
$\mathbb{R}_+^N$, the sought-after condition on $\psi$ is that it satisfy
\begin{equation*} 
\langle \frac{\nabla \psi}{\vert \psi \vert} , \frac{1}{\sqrt{N}}(1,\ldots,1) \rangle < \frac{1}{\sqrt{2}}
\end{equation*}
which can be rewritten as:
\begin{equation}\label{deffncondn}
2\big(\partial \psi/\partial z_1 + \ldots + \partial \psi/\partial z_N \big)^2 <
 N \big(\vert \partial \psi/\partial z_1 \vert^2 + \ldots + \vert \partial \psi/\partial z_N  \vert^2 \big),
\end{equation}
where all derivatives are to be evaluated at points $s$ in $\mathbb{R}^N$ where $\psi(s)=0$. This is the analytic condition for a convex function $\psi(s)$  
to satisfy, for the convex domain $G = \{ s \; :\; \psi(s)<0\}$ defined by it, to be the logarithmic image of the domain of convergence 
of some power series.\\


\noindent Now, while what we have shown in the foregoing paras, means for instance, that there is a power series convergent precisely on $\mathbb{B}^N$, we have not shown 
that every holomorphic function on $\mathbb{B}^N$ can be represented by a single convergent power series, as in dimension one. In fact, we have thus 
far, not really dealt with `holomorphicity'.

\begin{defn}
Let $D\subset \mathbb{C}^N$ be a domain. A function $f: D \to \mathbb{C}$ is said to be holomorphic if it admits a local representation by convergent power series i.e., every  point $p \in D$ has corresponding to it a countable set of complex numbers $\{c_J(p): J \in \mathbb{N}_0^N\}$ and a neighbourhood $U_p$
such that the power series about $p$, $\sum c_J(p) (z-p)^J$ converges for all $z \in U_p$ to $f(z)$.  
\end{defn}

\noindent Thus a holomorphic function $f$ on a domain $D$ may be thought of as being obtained by gluing together an appropriate collective 
of `function elements' with each such element being defined by power series convergent on some patch (=sub-domain) inside the domain; the appropriateness
here being the requirement of the collective to satisfy basic compatibility conditions: any two members out of this collective need to agree on the intersection of their patches. We shall not digress into complex analysis of several variables here; in particular not even pause to discuss the uniqueness of the numbers $c_J(p)$ in the possibility of multiple local representation by power series in the definition above. We refer the reader to standard references (such as \cite{R} or \cite{S}) wherein familiar basic 
properties such as the (local) Cauchy integral formula, maximum modulus principle, open mapping theorem, identity principle, theorems of Weierstrass and Montel etc., are established for holomorphic functions of several variables; alternative definitions for holomorphic functions are provided and the equivalences established therein as well. We shall only remark that analogous to the one variable case, the numbers $c_J(p)$ are given by: $c_J(p) = D^Jf(p)/J!$. This means that  
local information about $f$ {\it near} any point $p\in D$, is determined by the `germ' of infinitesimal data of $f$ and 
all its derivatives {\it at} the point $p$. Dual to this outward flow of information about $f$ from $p$ is the more interesting
inner sweep: local information about $f$ in a neighbourhood $U$ of the point 
$p\in D \subset \mathbb{C}^N$ can be obtained by suitably integrating the data about the  values of $f$ alone -- no derivatives required -- on the thin subset of the boundary of the polydisc $P$ describing the neighbourhood $U$, given by its distinguished bit $\partial_0 P$ whose real dimension is $N$ (half that of $D$). Indeed, what is being alluded to here is the local Cauchy integral formula valid for polydiscs from which Cauchy estimates follow as
was shown in lemma \ref{Cauchyest} much as in the one variable case. In particular, control on the values of a holomorphic function $f$ and all its derivatives at a
point is attained from the knowledge of its values on the distinguished boundary of any polydisc centered at that point and contained in the domain of $f$.
This facilitates establishing the representation of a function holomorphic on a polydisc by a single power series. Concerning the representation of holomorphic functions by a single power series on discs in dimension $1$, we must remark here that: it should not be concluded from the foregoing considerations it is only on logarithmically convex complete Reinhardt domains that every holomorphic function has a representation by a single power series. Infact, such a representation is valid on any complete Reinhardt domain -- logarithmic convexity is inessential here. This follows from the foregoing observation on the representation of holomorphic functions on a polydisc by a single power series convergent therein, together with the fact that complete Reinhardt domains are nothing but a union of concentric polydiscs. Finally we remark in passing that infact, we may expand any holomorphic function on any complete circular domain, into a series of homogeneous polynomials compactly convergent on such a domain. All this and much more can be found in the excellent text \cite{JarPflu}. \\

\noindent Among the first fundamental and strikingly new phenomenon in complex dimensions $N$ any greater than one, is the Hartogs phenomenon: every holomorphic function on a punctured polydisc extends across the puncture, so that in particular, there are no isolated singularities for holomorphic functions on domains in dimensions $N>1$. For convenience in sketching a quick proof, let us demonstrate this phenomenon on $U^p :=U \setminus \{p\}$ where 
$U$ is the polydisc centered at the origin in $\mathbb{C}^2$ of polyradius $(2,2)$ with the puncture $p=(1,1)$. Given any holomorphic function
$f$ on $U^p$, we apply the fact mentioned in the foregoing para to the restriction of $f$ to the complete Reinhardt domain $L$, where $L$ is the subdomain 
of $U^p$ whose absolute diagram was sketched in figure \ref{fig:Logconvexity}; the aforementioned fact ensures a representation of $f$ by a single power series compactly convergent (at least) on $L$. But then as $L$ fails to be logarithmically convex, we conclude that this power series must converge on some neighbourhood of $p$ as well. As the function defined by this power series already agrees with $f$ on the open set $L$, the identity principle guarantees that this 
is indeed a holomorphic extension of $f$ across $p$. This finishes the proof that all functions holomorphic on the domain $U^p$ extend `simultaneously' 
across the boundary point $p$. \\
 
\noindent It is then natural to single out domains maximal with respect to this phenomenon of simultaneous extension of holomorphic functions i.e., domains $D$ such that for each boundary point $p \in \partial D$, there is a function 
$f_p$ holomorphic on $D$ resisting holomorphic continuation to any neighbourhood of $p$.
A domain possessing this property is called a domain of holomorphy. It turns out that this property is equivalent to the stronger requirement that
there exist at least one holomorphic function which does not extend holomorphically across the boundary near any point in $\partial D$. Infact this is 
only one among multiple equivalent definitions/characterizations of domains of holomorphy. A celebrated problem going by the name of the Levi problem and taking several decades for its complete resolution, was to obtain a geometric characterization of domains of holomorphy. This is best left for another essay; suffice it to say here that the answer lies in a subtle convexity property  
and we refer the reader again to \cite{R1}, \cite{R2} and other texts of the subject. Our next goal here will be to show that domains of convergence of power series are indeed domains of holomorphy. \\

\noindent The question to be dealt with now is: given a domain $D$ which is the domain of convergence of some power series (equivalently, a logarithmically convex complete Reinhardt domain $D$) in $\mathbb{C}^N$ and an arbitrary point 
$p$ of its boundary $\partial D$, is it possible to construct (another) power series $f_p(z)$ which converges on $D$ and whose limit supremum  as $z \to p$ is $\infty$? Note that this question does not get trivially settled with the knowledge of the existence of a power series converging precisely on $D$, owing to the possibility of the existence some (tiny) piece of $\partial D$ across which all such power series can  somehow be continued holomorphically.
As already seen at (\ref{splpowseries1}), while constructing power series with certain desired properties, it is best to use the freedom in expressing them as a sum of monomials in any order that we wish -- in a manner that is telling about the desired properties. With this flexibility, let us demonstrate that domains of convergence of power series are 
(what are known as `weak'-) domains of holomorphy by constructing the function $f_p$ in question. We cannot help but narrate here the clear but concise 
treatment in Ohsawa's little text \cite{Oh}. Suppose first that $D$ is bounded and observe that given any point $p$ in the exterior of $D$ (i.e., $p \in \mathbb{C}^N \setminus \overline{D}$), there exists a monomial $m_p(z)$ such that
\begin{equation} \label{monomial} 
\sup\limits_{z \in D} \vert m_p(z) \vert < m_p(p) =1. 
\end{equation}
Indeed, this follows essentially by passing to the logarithmic image $G=\lambda(D)$, applying to it a standard separation theorem to the convex domain $G$ and then exponentiating back -- the only possible hitch in this process arising when some of the coefficients of the gradient vector of the hyperplane separating $\lambda(p)$ and $G$ are irrational, can be overcome by a slight perturbation of the hyperplane preserving the separating property. The possibility of such a suitable slight perturbation is facilitated by the assumption of the boundedness of $D$ as illustrated in the figure \ref{fig:perturbing}. 

\begin{figure}[h]
  \includegraphics[scale=0.15]{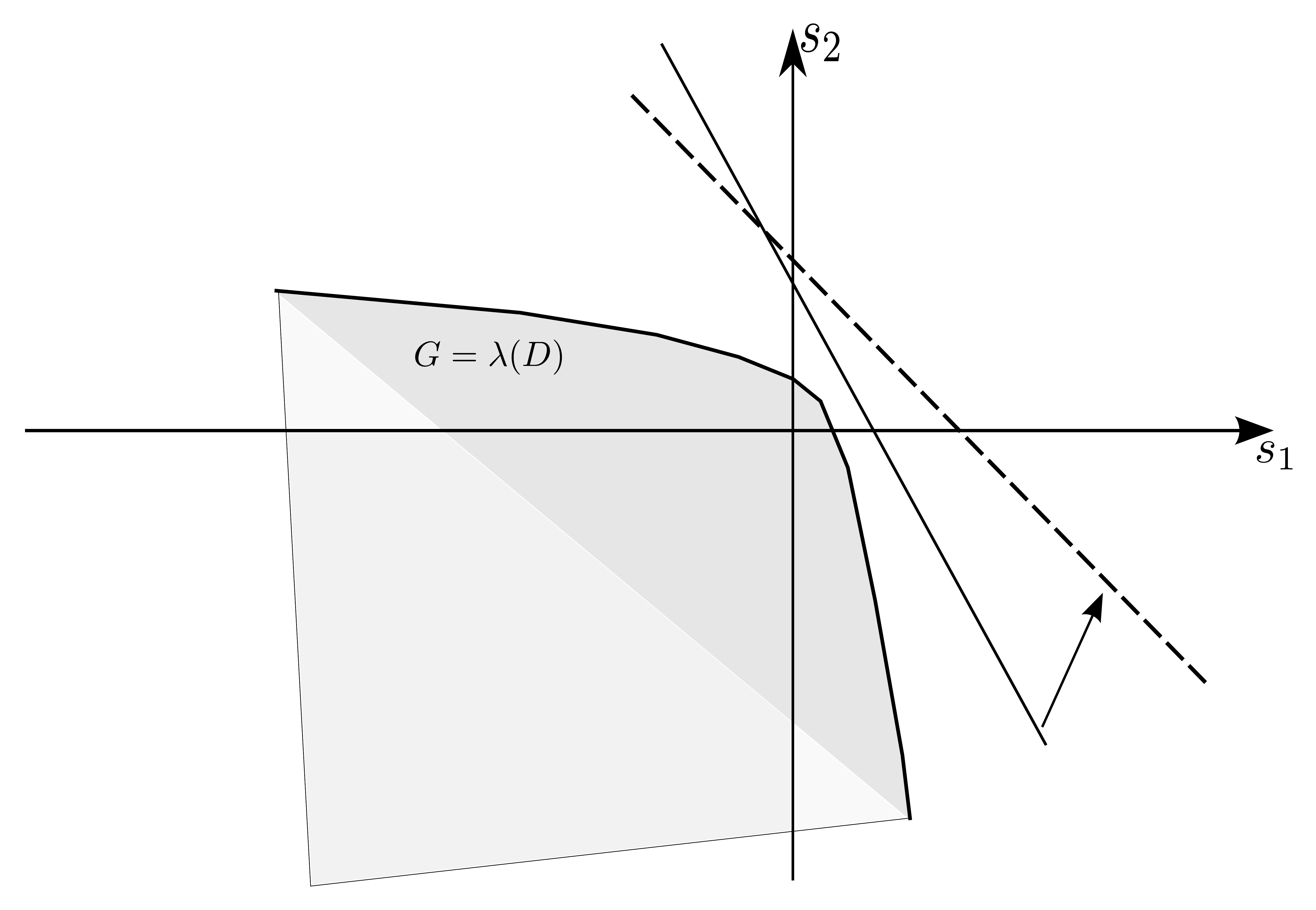}
  \caption{Perturbing to get a rational gradient.}
  \label{fig:perturbing}
\end{figure}

\noindent Among other things, what (\ref{monomial}) means is that we may arrange for the supremum on $D$ appearing therein to be arbitrarily small, by taking powers of the monomial $m_p$,  while maintaining the value at $p$ to be at unity; in symbols, $m_p(z)^{n_k}$ for a suitable $n_k \in \mathbb{N}$, will satisfy
\[
 \sup\limits_{z \in D} \vert \big(m_p(z)\big)^{n_k} \vert < 1/2^k.
\]
The sum of such monomials gives a power series uniformly convergent on $D$.
Further now, we need to modify this series to make it take arbitrarily large values along some sequence 
approaching $p$. Thus on the one hand, we need the supremums on compact subdomains of the monomials constituting our power series to decrease exponentially 
and on the other hand we need its values along some sequence approaching the boundary to blow up. In order to have these requirements met, it is 
natural to exhaust the given domain $D$ by a sequence of relatively compact subdomains expanding out to the boundary and then apply (\ref{monomial}) to each member of this sequence. Before proceeding to work this out rigorously, note that we may further multiply the monomial $m_p$ as above,  by a constant $C$ independent of $k$ to get a monomial, denoted again by $m_p$, which assumes the value $C$ at $p$ and satisfies an exponential decay rate in $k$ on the given domain $D$:
\begin{equation}\label{refined}
 \sup\limits_{z \in D} \vert \big(m_p(z)\big)^{n_k} \vert < C/2^k.
\end{equation}
Now, we may drop the assumption that $D$ is bounded, for we intend to apply (\ref{monomial}) or rather its refined version (\ref{refined}) for our purpose 
only to the bounded {\it subdomains} exhausting $D$ as mentioned above. To work this out, denote by $\mathbb{B}_j$ the ball of radius $j$ about the origin which by the way is recalled to be an interior point of $D$. Figuring out that $\lambda^{-1}(G_j)$ must be connected, where
\[
G_j = \{ s \in \mathbb{R}^N \; : \; {\rm dist}(s, \partial G) > 1/j \},
\] 
we set $D_j$ to denote the bounded sub-domain of $D$ obtained by intersecting the domain $\lambda^{-1}(G_j)$ by $B_j$ -- this intersection has got to be non-empty for all $j$ large and nothing is lost by assuming that this happens right from $j=1$.  Recall that as $D$ is a {\it complete} Reinhardt domain,  the infinite box-neighbourhood of $(-\infty, \ldots, -\infty)$ (at the `left-bottom') arising as 
the logarithmic image of the polydisc spanned by any point 
is contained in $G$ and consequently in all the $G_j$'s as well owing to the concavity of the function ${\rm dist}(\cdot, \partial G)$ on $G$; this ensures that all the $D_j$'s are complete Reinhardt domains as well. 
If $z,w$ are 
a pair of points in $G$ whose distance from $\partial G$ are at least $\delta$, then concavity of the function ${\rm dist}(\cdot, \partial G)$ on $G$,
ensures that the minimum distance of every point of the line segment joining $z,w$ in $G$ lies at a distance at least $\delta$ from $\partial G$. This 
fact ensures that all the domains $G_j$'s are convex and thereby the logarithmic convexity of the $D_j$'s. Thus,
the $D_j$'s form an (increasing) exhaustion of $D$ by {\it bounded} logarithmically convex complete Reinhardt domains. Now, to construct an $f_p$ with 
$\limsup\limits_{z \to p} \vert f_p(z) \vert = \infty$, what could be more simple than to arrange for a function whose values at some sequence $p_j$ of
points in $D$ approaching $p$, is at least as big as $j$? In trying to arrange for such a function $f_p$, we must not loose sight of the requirement that $f_p$ is to be given by a power series which {\it converges on all} of $D$. Recall the availability of a characterizing test to determine whether or not a point 
belongs to the domain of convergence of any given power series, namely proposition \ref{charactest}. 
Put in words, according to this proposition, a point 
 is within the
domain of convergence of a power series if the sequence of complex numbers obtained by evaluating the monomials constituting the power series (in the standard partial ordering by degree) at that point, decays to zero at least at an exponential rate; stated differently, faster than a geometric progression (of ratio $<1$). The last statement holds with the word `point' replaced by `any point from the set of all points whose distance to the boundary of the domain of convergence is bounded below by a positive constant'. We choose the standard geometric progression namely $\{1/2^k\}$ for measuring/controlling the rate in what follows. First, let $p^j$ be sequence in $D$ which converges to $p$; indeed, choose the sequence so that (it escapes out of the $D_j$'s linearly as:) $p^j \in D_{j+1} \setminus D_j$ and converges to $p$. Corresponding to each such $p^j$, by (\ref{monomial}) choose a monomial $m_{p^j}$ whose value at $p^j$ exceeds the supremum of its values on $D_j$. We wish to arrange our series $f_p$ in such a way that the value of the $n$-th term of the series, at $p^n$, exceeds $n$ -- the amount by which it exceeds, is arranged to cancel out the possible negative contributions of the remaining terms, so as
to  ensure ($f_p(n) >n-1$) ultimately that $f_p(p^n) \to\infty$. For instance, we may take the $n$-th term to be $c_n m_{p^n}(z)$ with $c_n >n$, whose value at $p^n$ is $c_n$. The major part of the `negative contributions' to possibly pull down the value of $f_p$ at $p^n$, will conceivably due to the terms preceding the $n$-th term, as the remaining tail of the series $f_p$ (assuming convergence) will be small. Put together with the aforementioned convergence criterion, we are then led to seek for sequences $n_k \in \mathbb{N}$ and real numbers $c_k$ such that
\[
c_k = k +\Big \vert \sum\limits_{j=1}^{k-1}c_j \big(m_{p^j}(p_k) \big)^{n_j} \Big \vert 
\]
together with the requirement
\[
\sup\limits_{z \in D_k} \big \vert c_k ( m_{p^k}(z) )^{n_k} \big \vert < 1/2^k.
\]
It is easy to construct the sequences $c_k$ and $n_k$ inductively, satisfying the above conditions at each stage. Then the series 
\[
\sum\limits_{j=1}^{\infty} c_j \big(m_{p^j}(z)\big)^{n_j}
\]
is compactly convergent (recall $D_j$'s are relatively compact) on $D$ and thus defines a holomorphic function $f_p(z)$ on $D$. As $f_p(p^n) >n-1$, we must have 
$\limsup\limits_{z \to p} f_p(z) = \infty$, with which we have attained our goal of checking out that domains of convergence of power series 
are indeed domains of holomorphy. \\

\begin{rem}
The series just constructed may converge on a domain larger than $D$; so, there is no guarantee that it is also a power series which
converges `precisely' on the given logarithmically convex complete Reinhardt domain $D$.
\end{rem}

\noindent Let us provide for convenience of the reader a concrete power series for the ball:
\begin{ex}
Show that the domain of convergence in $\mathbb{C}^2$ of the power series of two complex variables $z,w$ given by
\[
\sum\limits_{j,k \in \mathbb{N}} \frac{f(j)f(k)}{f(j+k)} z^jw^k
\] 
where $f(t)=\sqrt{t^t}$, is the unit ball $\mathbb{B}^2$. 
\end{ex}

\noindent Taking $f(t)=t^2$ gives a power series whose domain of convergence is precisely 
\[
E_{1/2}=\{(z,w) \in \mathbb{C}^2 \; : \; \sqrt{\vert z \vert} + \sqrt{\vert w \vert} <1 \}.
\]
Thus, this gives an example of a complete Reinhardt logarithmically convex domain which is not convex. Infact, $E_{1/2}$ is a pseudoconvex domain cannot be mapped onto a convex domain by any biholomorphic transformation whatsoever. Indeed, one will need some property invariant under biholomorphic transformations to establish the inequivalence of $E_{1/2}$ with any given convex domain; obviously, it has got to be stronger than merely being a
topological property. It turns out that each domain in $\mathbb{C}^N$ has certain intrinsic complex geometric properties, which remain invariant under biholomorphic mappings and which are captured by what go under the collective
title of `invariant metrics'. For the problem at hand, the technique of invariant metrics reduces the proof for the non-existence of a 
biholomorphism between $E_{1/2}$ and a convex domain to that of a linear mapping. To tell the basic idea, a bit
more precisely, equip the domains with an invariant metric for definiteness, say the Kobayashi infinitesimal metric.
Instead of digressing here into an exposition on invariant metrics or list all
the texts available or even their definition, we shall only mention the introductory article \cite{IKra} 
and get to the role played by them here.
If there were a biholomorphism between $E_{1/2}$ and a convex domain then firstly, note that the linear mapping 
given by the derivative of the biholomorphism renders an equivalence between
the Kobayashi indicatrices at the (tangent spaces at the) corresponding points.
In view of results in \cite{BS}, it is actually possible to assume, after composing with another biholomorphism
if necessary, that the convex domain is bounded.
It is known, due to a deep work \cite{L} of Lempert, that the
Kobayashi indicatrix of any bounded convex domain is convex. On the other hand, the indicatrix
at the origin for $E_{1/2}$, being a copy of $E_{1/2}$ itself, is non-convex. 
As convexity remains invariant under linear mappings, this yields a contradiction. 
The Kobayashi metric is only one of various functorial constructs going by the collective name of `Schwarz -- Pick' systems,
which provide metrics well-adapted for complex analysis; the extent to which this subject has evolved is evidenced by  
the authoritative work \cite{JarPflu2}. 
The example $E_{1/2}$ above already calls attention to the fact that logarithmic convexity
or pseudoconvexity are not naively biholomorphically modified versions of convexity. Infact, the relationship between pseudoconvexity and convexity
is still a mystery -- there remains open till
date, a tantalizing question brought out in the (end of) the article \cite{R1}. We shall round off the discussion here
with a remark about a source for open questions about power series.

\begin{rem} The fact that the subject of power series is fundamental and elementary does not mean that all basic questions about them have more or less been settled. Among many recent works concerning power series, we call attention to the semi-expository article \cite{Bo2} concerning the Bohr phenomenon arising out of functions defined 
by power series on logarithmically convex complete Reinhardt domains; associated to such domains are certain curious numbers called the `Bohr radius'. For
an exposition of this as well as for open problems, the ambitious reader may consult \cite{Bo2}. 
\end{rem}

\noindent Now, we know that given any power series, we may read off the equation defining the boundary of its domain of convergence from its coefficients; it is given precisely by (\ref{deffnforbdy}). Conversely, we have been discussing methods to explicitly write down power series which converge on any given logarithmically convex multicircular domain. Now, when we say, we are `given a domain', what could this mean in practice? The most tangible meaning would be that we are given (the knowledge of all connected components of) the boundary of the domain as the zero set of a defining function. How does one plot points of the boundary, given the defining function $\varrho$, say? Well, the immediate answer would be write down solutions to the equation $\varrho=0$. But then, such an equation is in general is never going to be linear and very likely, difficult to solve.
One way out of this problem, while dealing with convex domains and thereby for our problem of constructing power series,
is to express everything in terms of the support function (as we have already done) and then seek a link between the support function and the defining function, which is the matter that we take up next. \\

\noindent Suppose $G \subset \mathbb{R}^N$ is a convex domain with support function $h$. Then $G$ can be written as the intersection of {\it open} 
half-spaces
\[
G = \bigcap\limits_{\alpha \in \mathbb{R}^N} \{ x \in \mathbb{R}^N   \; : \;  \langle \alpha, x \rangle - h(\alpha) <0 \}
\]
However, we cannot claim from this that $G$ equals $\{ x   \; : \; \sup_{\alpha \in \mathbb{R}^N} \{ \langle \alpha, x \rangle - h(\alpha)\} <0 \}$ nor 
that it equals $\{ x   \; : \; \sup_{\alpha \in \mathbb{R}^N} \{ \langle \alpha, x \rangle - h(\alpha)\} \leq 0 \}$. On the other hand, we may restrict the parameter $\alpha$ to vary over the {\it compact} set $S_N$, the standard simplex, and still write
\[
G = \bigcap\limits_{\alpha \in S_N} \{ x \in \mathbb{R}^N   \; : \;  \langle \alpha, x \rangle - h(\alpha) <0 \}
\]
That is, $G$ equals the set of all those points $x$ which satisfy $\langle \alpha, x \rangle - h(\alpha) <0$ for all $\alpha \in S_N$. So, for each fixed 
$p \in G$, the function $\langle \alpha, p \rangle - h(\alpha) $ is an  upper-semicontinuous concave function which is  strictly negative on $S_N$  and therefore attains its supremum on $S_N$ at some point therein and consequently this supremum must be strictly negative. This proves that
\[
G= \{ p   \; : \; \sup_{\alpha \in S_N} \{ \langle \alpha, p \rangle - h(\alpha)\} <0 \},
\]
a claim that cannot be made if $\alpha$ were allowed to vary over all of $\mathbb{R}^N$ in the above. In other words, this is saying that 
$G$ is precisely the domain defined by the Legendre transform (also called Fenchel -- Legendre transform or convex conjugate) of the restriction of the support function of $G$ to $S_N$. On the other hand, given a defining function $\psi$ for a convex domain $G$ in $\mathbb{R}^N$, it is straightforward to write down the value of the support function for the normal vector at boundary points $p \in \partial G$, as:
\[
h(\triangledown \psi(p)) = \langle p, \triangledown\psi(p) \rangle
\]
which agrees with the Legendre transform of $\psi$ for normal vectors at all points of the boundary. If we normalize the normal vectors at all points of $\partial G$, so as to be unit vectors in the $l^1$-norm, we obtain a convex subset of $S_N$, by virtue of the convexity of $G$. We may then extend $h$ by the general property of positive homogeneity of the support function to obtain its values on a convex cone and subsequently thereafter, take the lower semicontinuous regularization, to completely obtain the support function 
$h:\mathbb{R}^N \to (-\infty, +\infty]$ from a given defining function $\psi$ for $G$.
The Legendre transform, among other notions of duality, is of fundamental importance in the subject of convex analysis which we shall 
only briefly review in the next and last section, and end. \\

\section{Appendix -- Basics of Convex Analysis and Affine Geometry} \label{last}
\noindent The reader is assumed to have some familiarity with convexity. So instead of saying that a convex set is a subset some of $\mathbb{R}^N$ of closed under the geometric operation of formation of straight line segments joining any pair of its points, we are going to say: that a convex set is a subset $C$ of some
$\mathbb{R}^N$, which is closed under the one-parameter family of algebraic operations given by the weighted arithmetic mean 
$(p,q) \to (1-t)p + tq$ 
for $t \in I$ and $p,q \in C$. Our purpose here is to gather together results in convex analysis to serve as a convenient reference for the main text. Proofs therefore, are omitted. They can be found in the systematic treatment \cite{H} or in many good expository texts such as \cite{MT}.
Henceforth $V$ shall denote a real vector space of finite dimension. Given an arbitrary subset $E$ of $V$,
the intersection ${\rm ah}(E)$ of all affine subspaces containing $E$ is an affine subspace called the affine hull of $E$, which has the following analytic expression
\[
{\rm ah}(E)= \{ \sum\limits_{j=1}^{n} \lambda_j x_j \; :\; \sum\limits_{j=1}^{n} \lambda_j=1, \; x_j \in E, \; n=1,2 \ldots \},
\]
If the $\lambda_j$'s in the above are further required to be positive, we obtain what is called the convex hull of $E$, denoted ${\rm ch}(E)$.
Let $C \subset \mathbb{R}^N$ be convex. A point $x$  is said to be in the relative interior of $C$ if $x$ has a neighbourhood $U$ open in $\mathbb{R}^N$ such that $U \cap {\rm ah}(C) \subset C$. Note that the relative interior of a convex set is always a (non-empty) convex set and the closure of the relative interior of $C$ is the closure of $C$. 
A point $x \in C$ is said to be an extreme point of $C$ if it does not lie 
in the relative interior of any line-segment in $C$. The set of all extreme
points of $C$ is denoted ${\rm ext}(C)$.
\medskip \\
\noindent Trivially, every affine subspace of $V$ is convex. An affine subspace of codimension $1$ is termed a hyperplane, which divides $V$ into two connected components; each of these connected components of the complement of a hyperplane is an open {\it half-space}. Each half-space is convex and is denoted generally by $H$ overloaded by some subscript or superscript when it is desirable to specific about its gradient or a point through which it passes. The closure of a half-space -- often denoted by $H$ again -- is convex, as is more generally the closure of any convex set. Another fundamental example of convex set is provided by the class of convex cones: a cone is any set set $A$ which is invariant under homotheties i.e., $x \in A \Rightarrow 
 \alpha x \in A$ for all $\alpha \geq 0$; therefore, convex cones are those cones which are convex sets. One way of generating examples of convex cones,
is to take any set $S \subset \mathbb{R}^N$ and form its 
{\it characteristic/recession} cone, given by
\[
{\rm rec}(S) = \{ y \in \mathbb{R}^N \; :\; x+ \lambda y \in S,
\text{ for all} x \in S \text{ and } \lambda>0 \}
\]
In other words, each vector of ${\rm rec}(S)$ represents a `direction to infinity in $S$.' The {\it lineality space} of the set $S$, denoted
${\rm lin}(S)$, is defined 
to be the largest linear subspace $L$ of $\mathbb{R}^N$ such that
$x + L \subset S$ for any choice of $x \in S$; this can be expressed in 
terms of characteristic cones as:
\[
{\rm lin}(S) = {\rm rec}(S) \cap {\rm rec}(-S).
\]
These notions aid in formulating general structure theorems for  {\it unbounded
convex bodies} i.e, closed sets which are closures of unbounded convex domains. Instances of such bodies of importance for us are logarithmic images of domains of convergence of power series whose boundaries, as noted in the text of the foregoing section, are homeomorphic 
to $\mathbb{R}^{N-1}$. 
An unbounded convex body whose boundary is homeomorphic to $\mathbb{R}^{N-1}$, is by lemma 2.2 of \cite{Ghomi}, expressible
as a sum of its lineality space and its orthogonal projection onto
the ortho-complement of the lineality space, a line-free unbounded
convex body; further, the boundary of the latter summand is also homeomorphic to some Euclidean space. A line-free closed convex set $C$ may be expressed as a sum of its 
characteristic cone and the convex hull of its extreme points ${\rm ext}(C)$.
We shall not spend any further space about such results or notions of
convex bodies, which are strictly speaking not needed here.
However, they may provide helpful background and in this regard 
we find the availability of good expositions of basics of convex bodies in sufficient abundance,
including the systematic encyclopaedic volume \cite{Sch} which however for the most part restricts attention to compact
convex bodies. 
Basics of unbounded convex bodies with proofs and references 
can be found in \cite{Ghomi} or from the much older article \cite{St}. 
Among basic examples 
of bounded convex sets are balls with respect to any norm. Of course all norms on the finite dimensional $V$ are equivalent; but they are far from being affinely equivalent -- note that convexity is preserved by invertible affine maps of $V$ -- in the sense that one cannot be obtained from 
the other by an affine change of variables. Convexity may also be considered on spheres. To introduce this
quickly, consider the unit sphere centered at the origin in $\mathbb{R}^N$ denoted $S^{N-1}$ and a subset $C$ thereof. If the cone 
consisting of rays through the origin and passing through points $p$, as $p$ varies through $C$, happens to be a convex cone in $\mathbb{R}^N$, then 
we say that $C$ is a spherically convex subset of $S^{N-1}$. A spherical-geometric way of checking convexity of a subset $A$ of the sphere, is to check 
for every pair of points $x,y \in A$ with $y \neq \pm x$ that, the set $A$ contains the smaller arc of the great circle on $S^{N-1}$ connecting $x$ and $y$.
We next pass onto the notion of convex functions.
\begin{defn}
Let $X$ be a convex set. A function $f: X \to (-\infty,+\infty]$ is termed convex if 
\[
f(\lambda_1 x_1 + \lambda_2 x_2) \leq \lambda_1 f(x_1) + \lambda_2 f(x_2)
\]
for all pairs of positive numbers $\lambda_1, \lambda_2$ with $\lambda_1 + \lambda_2 =1$ and $x_1, x_2 \in X$. Equivalently, a function is convex iff 
its epigraph 
\[
\{ (x,t) \in V\oplus \mathbb{R} \; :\; x\in X, t \geq f(x) \}
\]
is a convex set.
\end{defn}

\begin{thm}
If $f$ is a convex function on $V$, then 
\[
X = \{ x \in V \; : \; f(x) < \infty \}
\]
is a convex set and $f$ is continuous in the relative interior of $X$ i.e, in the interior of $X$ in ${\rm ah}(X)$.
\end{thm}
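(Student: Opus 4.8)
The plan is to dispatch the two assertions separately. That $X$ is convex is immediate: if $x_1,x_2\in X$ and $\lambda_1,\lambda_2$ are positive with $\lambda_1+\lambda_2=1$, the defining inequality gives $f(\lambda_1x_1+\lambda_2x_2)\le\lambda_1f(x_1)+\lambda_2f(x_2)<\infty$, so the combination lies in $X$. Note also that ${\rm relint}(X)\subseteq X$, so $f$ is finite on ${\rm relint}(X)$ and the continuity claim is at least meaningful. The continuity itself will follow from the classical three-step argument: $f$ is locally bounded above on ${\rm relint}(X)$, then locally bounded below, and finally locally Lipschitz, hence continuous.

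First I would reduce to the full-dimensional situation. Restricting $f$ to ${\rm ah}(X)$ does not affect convexity, and after a translation we may identify ${\rm ah}(X)$ with a Euclidean space $\mathbb{R}^k$, $k=\dim{\rm ah}(X)$; then ${\rm relint}(X)$ becomes the ordinary interior $U$ of the convex set $X$ in $\mathbb{R}^k$, an open convex set on which $f$ is finite. Fix $x_0\in U$. Since $x_0$ is an interior point, we can enclose it in the interior of a $k$-simplex $\Sigma$ with vertices $v_0,\dots,v_k\in U$ (take, say, $x_0$ to be the barycentre of a small simplex inside a ball $B(x_0,\delta)\subseteq U$). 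Every $x\in\Sigma$ is a convex combination $x=\sum_i\lambda_iv_i$, so iterating the two-point convexity inequality yields $f(x)\le\sum_i\lambda_if(v_i)\le\max_i f(v_i)=:M$. Thus $f\le M$ on some ball $B(x_0,\rho)\subseteq\Sigma$.

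For the lower bound, given $x\in B(x_0,\rho)$ set $x'=2x_0-x$, which again lies in $B(x_0,\rho)$; from $x_0=\tfrac12x+\tfrac12x'$ and convexity, $f(x_0)\le\tfrac12f(x)+\tfrac12f(x')$, so $f(x)\ge 2f(x_0)-M$, and hence $|f|\le M'$ on $B(x_0,\rho)$ for some constant $M'$. For the Lipschitz estimate, shrink to a ball $B(x_0,r)$ with $2r<\rho$. Given distinct $x,y\in B(x_0,r)$, put $z=y+\tfrac{r}{|y-x|}(y-x)\in B(x_0,2r)$ and $t=\tfrac{|y-x|}{|y-x|+r}\in(0,1)$, so that $y=(1-t)x+tz$; convexity gives $f(y)-f(x)\le t\big(f(z)-f(x)\big)\le\tfrac{2M'}{r}|y-x|$, and interchanging $x$ and $y$ gives $|f(x)-f(y)|\le\tfrac{2M'}{r}|y-x|$. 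Hence $f$ is Lipschitz near $x_0$, in particular continuous at $x_0$; as $x_0\in U$ was arbitrary, $f$ is continuous on all of ${\rm relint}(X)$.

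The main obstacle is the local upper bound (the simplex step): it is the only place the full strength of convexity, the iterated multi-point Jensen-type inequality, is used, and it is precisely where the hypothesis that $x_0$ lie in the relative interior, so that a full-dimensional simplex neighbourhood is available within ${\rm ah}(X)$, is indispensable. Once local boundedness above is secured, the remaining two steps are routine symmetrization arguments. A minor bookkeeping point deserving care is the very first reduction: one should check that ``interior in ${\rm ah}(X)$'' in the theorem's statement coincides with the notion of relative interior recalled in the appendix, and that the identification of ${\rm ah}(X)$ with $\mathbb{R}^k$ transports convexity of $f$ faithfully.
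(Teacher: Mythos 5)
Your proof is correct; it is the classical three-step argument (local upper bound on a simplex via iterated Jensen, lower bound by reflection through the centre, then the local Lipschitz estimate), and all the computations — in particular the choice of $z=y+\tfrac{r}{|y-x|}(y-x)$ and $t=\tfrac{|y-x|}{|y-x|+r}$ so that $y=(1-t)x+tz$ — check out. Note that the paper itself offers no proof to compare against: this theorem sits in the appendix, where the author explicitly states that proofs are omitted and refers the reader to H\"ormander's \emph{Notions of convexity} and to Magaril-Il'yaev--Tikhomirov; your argument is precisely the standard one found in those sources, and your closing remark about reconciling ``interior in ${\rm ah}(X)$'' with the appendix's definition of relative interior is the right bookkeeping to flag.
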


\begin{rem}
It is not always possible to redefine $f$ at boundary points of $X$ in ${\rm ah}(X)$, so as to have $f$ become
continuous with values in $(-\infty, +\infty]$.
\end{rem}

\noindent This problem is redressed by taking the lower-semicontinuous regularization.
\begin{prop}
Let $f$ be a convex function on $V$. Define for all $x \in V$:
\[
f_1(x)= \liminf\limits_{y \to x} f(y)
\]
Then $f_1$ is convex and $f_1(x) \leq f(x)$ for all $x$, with equality if $x$ lies in the interior of $X=\{ x \in V : f(x)<\infty\}$ in 
${\rm ah}(X)$ or interior in $V \setminus X$. The function $f_1$ is lower semi-continuous and is termed the lower semi-continuous regularization of $f$.
\end{prop}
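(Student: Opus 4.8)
The plan is to recognize $f_1$ as the lower semicontinuous hull of $f$ and to extract each assertion either straight from the definition of the $\liminf$ or from the epigraph picture. First I would record the inequality $f_1\le f$: writing $f_1(x)=\sup_{U\ni x}\inf_{y\in U}f(y)$ with $U$ ranging over neighbourhoods of $x$, each inner infimum is at most $f(x)$ (the point $x$ itself lies in $U$), so the supremum is as well.

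Next, lower semicontinuity. I would check that $\{x:f_1(x)>c\}$ is open for every real $c$. If $f_1(x_0)>c$, the definition of the $\liminf$ yields a neighbourhood $U$ of $x_0$ with $\inf_{y\in U}f(y)>c$. Then ${\rm int}(U)$ is an open set containing $x_0$, and for any $x\in{\rm int}(U)$ the set ${\rm int}(U)$ is itself a neighbourhood of $x$ contained in $U$, so $f_1(x)\ge\inf_{y\in{\rm int}(U)}f(y)\ge\inf_{y\in U}f(y)>c$. Hence $\{f_1>c\}$ contains a neighbourhood of each of its points, i.e. is open; this is precisely lower semicontinuity.

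For convexity, fix $x=\lambda x_1+(1-\lambda)x_2$ with $\lambda\in(0,1)$. If $f_1(x_1)$ or $f_1(x_2)$ is $+\infty$ the desired inequality is automatic, so assume both are finite and choose sequences $y^1_n\to x_1$, $y^2_n\to x_2$ with $f(y^i_n)\to f_1(x_i)$. Put $z_n:=\lambda y^1_n+(1-\lambda)y^2_n$, so $z_n\to x$; convexity of $f$ gives $f(z_n)\le\lambda f(y^1_n)+(1-\lambda)f(y^2_n)$, whose right-hand side converges to $\lambda f_1(x_1)+(1-\lambda)f_1(x_2)$. Therefore $f_1(x)=\liminf_{y\to x}f(y)\le\liminf_n f(z_n)\le\lambda f_1(x_1)+(1-\lambda)f_1(x_2)$, as required. (Equivalently, one may verify that the epigraph of $f_1$ is the closure of the epigraph of $f$ and invoke the fact, recalled above, that the closure of a convex set is convex; the epigraph of $f$ is convex because $f$ is.)

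It remains to treat the equality clause. If $x$ is interior to $X$ in ${\rm ah}(X)$, then by the continuity theorem recalled above $f$ is finite-valued and continuous at $x$, so $\liminf_{y\to x}f(y)=f(x)$; if $x\in{\rm int}(V\setminus X)$, some neighbourhood of $x$ is disjoint from $X$, so $f\equiv+\infty$ there and $f_1(x)=+\infty=f(x)$. The only real friction I anticipate is the extended-real-value bookkeeping in the convexity step — one must know the approximating sequences exist and avoid an $\infty-\infty$ clash — and this is dealt with by handling $f_1(x_i)=+\infty$ separately together with the standard fact that a convex function that is finite somewhere admits an affine minorant, so that $f_1>-\infty$ everywhere (assuming $X\ne\emptyset$, the case $X=\emptyset$ being vacuous).
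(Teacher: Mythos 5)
Your proof is correct and complete; note that the paper itself gives no proof of this proposition --- it appears in the appendix, where the author explicitly omits proofs and refers to H\"ormander's \emph{Notions of convexity} --- so there is nothing to compare against, and your argument is the standard one (the three steps: $f_1\le f$ from the non-deleted $\liminf$, openness of the superlevel sets $\{f_1>c\}$, and passing convexity to the limit along approximating sequences, with the epigraph-closure picture as an alternative). Two small points you handle correctly but could make explicit: the proposition only holds with the convention that the $\liminf$ is taken over full (non-deleted) neighbourhoods of $x$, since otherwise $f_1\le f$ can fail when ${\rm ah}(X)$ is a single point; and in the relative-interior equality case the continuity furnished by the preceding theorem is only continuity of $f$ restricted to ${\rm ri}(X)$, so one should observe that nearby points of $V\setminus{\rm ah}(X)$ contribute $+\infty$ to the infimum and therefore cannot pull $f_1(x)$ below $f(x)$.
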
 
\noindent If $f$ is not given to be defined on all of $V$ but given on a convex set $X$, we first extend by setting its values equal to $+\infty$ at all points  where it is not apriori given i.e., on $V \setminus X$; the above proposition then applies to furnish its lower semicontinuous regularization. The role of lower semi-continuity here is explained as follows. While the epigraph of a function $f$ is convex iff its epigraph is convex,
the epigraph is  closed iff $f$ is lower semi-continuous. This will be important in the subsection on the Legendre transform. 

\begin{defn}
Let $E \subset V$. The indicator function $I_E$ is the function whose value at points of $E$ is set equal to $0$ and equal to $+\infty$ at all points outside $E$. Such a function is convex precisely when $E$ is convex.
\end{defn}

\subsection*{Separation theorems}
\noindent The following four results go by the name of Hahn -- Banach theorems.
\begin{thm}
Let $D$ be a convex domain in $V$. If $x_0 \not \in D$, there is an affine hyperplane $H$ such that $x_0 \in H$ but $H \cap D = \emptyset$. 
Thus there is an affine function $f$ on $V$ with $f(x_0)=0>f(x)$ for all $x \in D$.
\end{thm}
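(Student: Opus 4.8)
The plan is to translate the picture so that $x_0$ becomes the origin, reducing the statement to: if $D\subset V$ is open and convex with $0\notin D$, then there is a nonzero linear functional $\ell$ on $V$ with $\ell(0)=0$ and $\ell(x)<0$ for all $x\in D$; then $H=\ker\ell$ and $f(x)=\ell(x-x_0)$ answer the claim. (If $D=\emptyset$ the assertion is trivial, so assume $D\neq\emptyset$.) I would first note that it suffices to produce a nonzero $\ell$ with $\ell\leq0$ on $D$: were $\ell(x_1)=0$ for some $x_1\in D$, openness of $D$ would let us move a little from $x_1$ in a direction along which $\ell$ is positive, contradicting $\ell\leq0$; hence $\ell<0$ on $D$ comes for free.

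The construction of $\ell$ proceeds by cases according to whether $0\in\overline D$. Suppose first $0\notin\overline D$. Fixing any $a\in D$, the set $\overline D\cap\overline B(0,|a|)$ is compact and nonempty, so $x\mapsto|x|$ attains a minimum over $\overline D$ at some $p$, and $\delta:=|p|=\operatorname{dist}(0,\overline D)>0$. For any $x\in\overline D$, convexity gives $|p+t(x-p)|^2\geq\delta^2$ for all $t\in[0,1]$; expanding and letting $t\to0^+$ yields $\langle p,x\rangle\geq\delta^2>0$. Hence $\ell(x):=-\langle p,x\rangle$ satisfies $\ell(0)=0$ and $\ell\leq-\delta^2<0$ on $\overline D\supseteq D$, as needed, and the hyperplane $\ker\ell$ passes through the origin.

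Now suppose $0\in\overline D$. Here the idea is to approximate $0$ by points outside $\overline D$ and pass to the limit. Picking $a\in D$, the points $y_t=(1-t)a$ for $t>1$ lie on the ray from $a$ through $0$, on the far side of $0$, and $0=\tfrac{t-1}{t}a+\tfrac1t y_t$ exhibits $0$ as a point strictly between $a$ and $y_t$. Since $D$ is open and convex and $a\in D$, the open segment joining $a$ to any point of $\overline D$ lies in $D$; thus if $y_t\in\overline D$ we would get $0\in D$, a contradiction, so $y_t\notin\overline D$, while $y_t\to0$ as $t\downarrow1$. Applying the previous case to each $y_{t_k}$ (with $t_k\downarrow1$) produces unit vectors $v_k$ with $\langle v_k,x\rangle<\langle v_k,y_{t_k}\rangle$ for all $x\in\overline D$. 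Compactness of the unit sphere of $V$ — this is where finite-dimensionality enters — lets me pass to a subsequence with $v_k\to w$, $|w|=1$, and then for each fixed $x\in\overline D$, $\langle w,x\rangle=\lim_k\langle v_k,x\rangle\leq\lim_k\langle v_k,y_{t_k}\rangle=\langle w,0\rangle=0$. Thus $\ell:=\langle w,\cdot\rangle$ is nonzero, vanishes at $0$, and is $\leq0$ on $D$, which completes this case and, with the reduction step, the proof.

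The routine parts are the translation reduction, the upgrade from $\ell\leq0$ to strict negativity on the open set $D$, and the nearest-point computation. The main obstacle is the boundary case $0\in\partial D$: one cannot project onto $\overline D$, and the argument genuinely needs both the elementary convex-geometry fact that the open segment from an interior point to any point of the closure stays interior, and a compactness argument to extract a limiting supporting hyperplane that actually passes through the prescribed point $x_0$ (separating $D$ from an exterior point is easy; pinning the hyperplane to a boundary point of $D$ is the delicate step).
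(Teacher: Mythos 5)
Your proof is correct. Note that the paper itself gives no proof of this statement: it appears in the appendix of convex-analysis prerequisites, where the author explicitly writes that ``Proofs therefore, are omitted'' and defers to H\"ormander \cite{H} and Magaril-Il'yaev--Tikhomirov \cite{MT}. Your argument is the standard finite-dimensional one found in those sources: the nearest-point computation $\langle p, x-p\rangle \geq 0$ handles the case $x_0\notin\overline D$, and the delicate boundary case $x_0\in\partial D$ is correctly resolved by separating from exterior points $y_{t_k}\to x_0$ and extracting a limiting unit normal via compactness of the sphere; the upgrade from $\ell\leq 0$ to $\ell<0$ using openness of $D$ is also handled properly. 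Nothing to flag.
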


\begin{cor}
Let $X$ be a closed convex subset of $V$. If $x_0 \not \in X$, there is an affine hyperplane containing $x_0$ which does not intersect $X$
i.e., there is an affine function $f$ with $f(x)\leq 0 < f(x_0)$ for all $x \in X$.
\end{cor}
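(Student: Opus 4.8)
The plan is to deduce the corollary from the preceding theorem by first \emph{thickening} $X$ to an open convex set that still avoids $x_0$. One may assume $X\neq\emptyset$, since otherwise the required inequality holds vacuously on $X$ and any affine $f$ with $f(x_0)>0$ suffices. As $X$ is closed, $x_0\notin X$, and $V$ is finite dimensional, the number $\delta:={\rm dist}(x_0,X)$ is well defined and positive. I would let $B$ be the open Euclidean ball of radius $\delta/2$ about the origin and set $D:=X+B=\bigcup_{x\in X}(x+B)$. Then $D$ is a \emph{domain}: open as a union of open balls, convex as a Minkowski sum of convex sets, and connected as a continuous image of the connected set $X\times B$ under $(x,v)\mapsto x+v$. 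Moreover $x_0\notin D$, for $x_0=x+v$ with $x\in X$ and $|v|<\delta/2$ would force ${\rm dist}(x_0,X)<\delta$.

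Next I would apply the preceding theorem to the domain $D$ and the exterior point $x_0$: it furnishes an affine function $g$ on $V$ with $g(x_0)=0$ and $g(x)<0$ for every $x\in D$, hence in particular $g<0$ on $X$. Writing $g(x)=\langle a,x\rangle+b$ with $a\neq 0$, the point $x+\frac{\delta}{4}\,a/|a|$ still lies in $D$ for each $x\in X$, so $g(x)+\frac{\delta}{4}|a|=g(x+\frac{\delta}{4}a/|a|)<0$; this gives the uniform bound $\sup_{x\in X}g(x)\le-\frac{\delta}{4}|a|=:-\eta$ with $\eta>0$. Then $f:=g+\eta$ is an affine function with $f(x)\le 0$ (indeed $f<0$) for all $x\in X$ and $f(x_0)=\eta>0$, so $f(x)\le 0<f(x_0)$ on $X$ and the hyperplane $\{f=0\}$ does not meet $X$, which is the assertion.

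Essentially all the content lies in the preceding theorem, so I expect no serious obstacle; the one point that needs care is the thickening device, namely that one must pass to a strictly larger \emph{open} convex set before invoking that theorem and then translate the separating hyperplane back by the gap $\eta$ in order to recover strict separation of $X$ itself rather than merely of its $\delta/2$-neighbourhood. As a self-contained alternative that bypasses the open-case theorem, one could argue via the metric projection: in finite dimensions the nearest point $p\in X$ to $x_0$ exists, is unique by convexity of $X$, and satisfies the variational inequality $\langle x-p,\,x_0-p\rangle\le 0$ for all $x\in X$, whence $f(x):=\langle x-p,\,x_0-p\rangle$ obeys $f(x)\le 0<|x_0-p|^2=f(x_0)$. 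Since the open-case theorem has just been recorded, however, the reduction above is the natural route.
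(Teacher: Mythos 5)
Your argument is correct: the paper omits proofs in this appendix (deferring to H\"ormander and Magaril-Il\cprime yaev--Tikhomirov), but the corollary is plainly meant to be deduced from the preceding theorem on open convex domains, and your thickening of $X$ to the open convex domain $D=X+B$ followed by the uniform shift $f=g+\eta$ with $\eta=\tfrac{\delta}{4}\vert a\vert$ carries out exactly that deduction, with the nearest-point alternative equally valid. The only cosmetic slip is in the last clause: the affine hyperplane containing $x_0$ and missing $X$ is $\{\,f=f(x_0)\,\}=\{\,g=0\,\}$, not $\{\,f=0\,\}$ (the latter indeed misses $X$ but does not pass through $x_0$).
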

\begin{cor}
If $X$ is a closed convex subset of $V$ and if $y$ is on the boundary of $X$, then one can find a non-constant affine function $f$ such that
$f(x)\leq 0=f(y)$ for all $x \in X$. The affine hyperplane $\{ x \in V \; : \; f(x)=0\}$ is called a supporting hyperplane of $X$.
\end{cor}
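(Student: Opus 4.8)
The plan is to deduce this from the preceding corollary (separation of a point from a closed convex set) by the standard compactness-and-limiting argument that yields the supporting hyperplane theorem. First fix once and for all an inner product on the finite-dimensional space $V$, so that we may speak of a Euclidean norm $|\cdot|$ and write every affine function as $x \mapsto \langle a,x\rangle + b$. Since $y \in \partial X$ and $X$ is closed, choose a sequence $y_n \to y$ with $y_n \notin X$ for every $n$. Applying the preceding corollary to the closed convex set $X$ and the exterior point $y_n$ furnishes, for each $n$, an affine function $f_n(x) = \langle a_n,x\rangle + b_n$ with $f_n(x) \le 0 < f_n(y_n)$ for all $x \in X$; here $a_n \neq 0$, because a constant function cannot satisfy such a strict inequality. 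Rescaling $f_n$ by the positive constant $1/|a_n|$ — which preserves both inequalities — we may and do assume $|a_n| = 1$ for all $n$.

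The key step is to extract convergent subsequences of $(a_n)$ and $(b_n)$. The vectors $a_n$ lie on the compact unit sphere of $V$, so after passing to a subsequence $a_n \to a$ with $|a| = 1$. For the constants, pick any $x_0 \in X$ (which is non-empty, having a boundary point): from $f_n(x_0) \le 0$ we get $b_n \le -\langle a_n,x_0\rangle \le |x_0|$, and from $0 < f_n(y_n)$ we get $b_n > -\langle a_n,y_n\rangle \ge -|y_n|$, while $(|y_n|)$ is bounded since $y_n \to y$. Hence $(b_n)$ is bounded, and after a further subsequence $b_n \to b$. Set $f(x) = \langle a,x\rangle + b$; this affine function is non-constant because $|a| = 1 \neq 0$.

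It remains to check $f(x) \le 0 = f(y)$ on $X$. For fixed $x \in X$, letting $n \to \infty$ in $f_n(x) \le 0$ gives $f(x) \le 0$; in particular $f(y) \le 0$, since $y \in X = \overline{X}$. For the reverse inequality at $y$, write $f_n(y) = f_n(y_n) - \langle a_n, y_n - y\rangle > -|y_n - y|$, using $|a_n| = 1$, the Cauchy--Schwarz inequality, and $f_n(y_n) > 0$; letting $n \to \infty$ and invoking $y_n \to y$ yields $f(y) \ge 0$. Thus $f(y) = 0$, and $\{x \in V : f(x) = 0\}$ is the desired supporting hyperplane. The one step that genuinely requires care — and the crux of the argument — is the normalization: without first rescaling to $|a_n| = 1$, the separating functionals could collapse in the limit to the zero functional, so one must simultaneously keep the gradients on the unit sphere and verify that the constants remain bounded, which is exactly where the non-emptiness of $X$ and the boundedness of $(y_n)$ are used.
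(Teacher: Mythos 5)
Your proof is correct, and it follows the route the paper clearly intends (the appendix states that proofs are omitted, but the ordering of the four Hahn--Banach statements signals exactly this deduction): separate each exterior point $y_n\to y$ from $X$ using the preceding corollary, normalize the gradients to the unit sphere, verify boundedness of the constants, and pass to a subsequential limit. All the delicate points --- non-vanishing of $a_n$ (which uses $X\neq\emptyset$, guaranteed since $y\in\partial X\subset X$ for closed $X$), the boundedness of $(b_n)$ from both sides, and the two-sided estimate forcing $f(y)=0$ --- are handled correctly, so there is nothing to add.
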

\begin{cor}
An open (resp. closed) convex set $K$ in a finite dimensional vector space is the intersection of the open (resp. closed) half-spaces containing it.
\end{cor}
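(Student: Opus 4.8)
The plan is to prove set equality by two inclusions. That $K$ is contained in the intersection of all half-spaces containing it is immediate from the definition. For the reverse inclusion it suffices to show that every point $x_0$ lying outside $K$ fails to belong to some single half-space of the prescribed type --- open if $K$ is open, closed if $K$ is closed --- which nonetheless contains $K$; running $x_0$ over $V\setminus K$ then forces the intersection of all such half-spaces to be contained in $K$, hence to equal $K$.

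In the open case I would invoke the first Hahn--Banach theorem above, applied to the convex domain $K$ (a nonempty open convex set is connected, hence a domain in the sense of this paper): given $x_0\notin K$ there is an affine function $f$ with $f(x_0)=0>f(x)$ for all $x\in K$, so the open half-space $\{x\in V : f(x)<0\}$ contains $K$ and excludes $x_0$. Note that this covers boundary points of $K$ as well, the openness of $K$ being precisely what upgrades the weak separation at a boundary point to the strict inequality $f<0$ on $K$. In the closed case I would instead use the corollary that for a closed convex $X$ and $x_0\notin X$ there is an affine $f$ with $f(x)\le 0<f(x_0)$ for all $x\in X$; then the closed half-space $\{x\in V : f(x)\le 0\}$ contains $X$ and misses $x_0$, as needed.

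The degenerate cases I would dispose of by the usual conventions: if $K=V$ there is no half-space containing $K$ and the intersection over the empty family is $V$, while if $K=\emptyset$ every half-space contains $K$ and the intersection of all open (resp.\ closed) half-spaces of $\mathbb{R}^N$ is already empty. I do not expect a genuine obstacle here: all the analytic content is carried by the Hahn--Banach separation statements established just above, and this corollary is in essence a repackaging of them. The only point requiring a moment's care is matching the flavour of the half-space to the flavour of the set --- using the strict-inequality separation of the domain theorem for open $K$, and the non-strict separation of the closed-set corollary for closed $X$ --- so that the half-spaces produced are genuinely open, respectively genuinely closed.
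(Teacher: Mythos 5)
Your proof is correct and follows exactly the route the paper intends: the corollary is placed immediately after the Hahn--Banach separation statements precisely so that it follows from them (the paper omits all proofs in this appendix), and your matching of the strict-inequality separation for the open case with the non-strict separation for the closed case, together with the treatment of $K=V$ and $K=\emptyset$, is exactly right.
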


\noindent As a closed convex set is the intersection of its supporting half-spaces, such a set can alternatively be described by specifying the position of its supporting hyperplanes, given their gradient vectors. This is captured by the {\it support function} introduced in definition (\ref{suppfn}). The geometric meaning of the support function is: for a unit vector $u$ with $h(u)$ finite, the number $h(u)$ is the signed distance of the supporting hyperplane to $C$ with normal vector $u$, from the origin; the distance is negative if and only if $u$ points into the open half-space containing the origin. From the definition, it is straight-forward to check that $h_C(\cdot) = \langle z, \cdot \rangle$ is a linear functional iff $C$ is a singleton. More importantly, $h$ is {\it positively homogeneous}:  $h(\lambda u) = \lambda h(u)$ for all $\lambda \geq 0$  and is sub-additive:
\[
h(u +v) \leq h(u) + h(v).
\]
These conditions constitute what is sometimes referred to as sub-linearity, from which it follows in particular that $h$ is a convex function. If $x \in \mathbb{R}^N \setminus C$, a separation theorem yields the existence of a vector $u_0$ with
$\langle x, u_0 \rangle > h(u_0)$. The support function of a convex set $C$ may also be defined as the Legendre transform of its indicator function $I_C$; the Legendre transform being defined in the following subsection.

\subsection*{The Legendre transform}
\begin{defn}
Let $f: \mathbb{R}^N \to \mathbb{R}$ any function. The Legendre transform ($=$ Fenchel -- Legendre transform), also called the convex conjugate, of $f$,  is defined by
\[
f^*(y) = \sup_{x} \{ \langle x, y \rangle - f(x) \}.
\]
\end{defn}
\noindent  We restrict attention to taking convex conjugates only of convex functions as in this case we have the the following key result: The convex conjugate of the convex conjugate of any given convex function is the given function itself. This is only recorded differently, in theorem \ref{CC} below. We have been silent about the domain of $f^*$; we shall allow $+ \infty$ to be in the range of $f^*$. Actually it is convenient here to have  functions defined on all of our vector space and in order to do this, we extend them by setting them equal to $+ \infty$ outside  the convex hull of the set of all points where it's value is specified.  Let $f$ be a convex function such that the set $X$ of all points where it is finite, has non-empty interior which we denote by $X^0$. Then it is possible to argue that $X^0$ must be a convex domain (the basic idea can be found in lemma 11.2.4 of
\cite{Berg}) and $f$ must be continuous herein. Next and further, by taking a liminf of $f$ at points on the boundary of $X^0$ we may redefine $f$ at these points, so that it becomes a lower semi-continuous function on the whole. This will pave the way for using the above definition of the Legendre transform for functions not apriori given to be defined on all of $\mathbb{R}^N$ and more importantly take the domain of $f^*$ to be all of $\mathbb{R}^N$.
\begin{thm} \label{CC}
The Legendre transform is an involution on the space of all lower semi-continuous convex functions on $\mathbb{R}^N$.
\end{thm}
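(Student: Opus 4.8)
The plan is to prove the Fenchel--Moreau identity $f^{**}=f$ for every lower semi-continuous convex function $f:\mathbb{R}^N\to(-\infty,+\infty]$; the degenerate case $f\equiv+\infty$, for which $f^*\equiv-\infty$ and hence $f^{**}\equiv+\infty$, being immediate, I shall assume $f$ not identically $+\infty$. This identity already yields the theorem, because $f^*$ is itself always lower semi-continuous and convex: it is the pointwise supremum of the affine functions $y\mapsto\langle x,y\rangle-f(x)$, hence convex, and a pointwise supremum of continuous functions, hence lower semi-continuous. So the Legendre transform carries the stated space into itself, and $f^{**}=f$ says precisely that its square is the identity.

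First I would dispose of the inequality $f^{**}\le f$, which uses only the definition: for all $x,y$ one has $f^*(y)\ge\langle x,y\rangle-f(x)$, hence $\langle x,y\rangle-f^*(y)\le f(x)$, and taking the supremum over $y$ gives $f^{**}(x)\le f(x)$ --- this is nothing but the Fenchel inequality. It remains to prove $f^{**}(x_0)\ge f(x_0)$ at an arbitrary $x_0$, and here I would argue by contradiction with the separation corollaries recorded above, applied to the epigraph $E={\rm epi}(f)\subset\mathbb{R}^N\oplus\mathbb{R}$, which is convex because $f$ is convex and closed because $f$ is lower semi-continuous. Fix any real $t_0<f(x_0)$; then $(x_0,t_0)\notin E$, so there is a non-constant affine function $\Lambda(x,t)=\langle a,x\rangle+\beta t+\gamma$ which is $\le 0$ on $E$ and $>0$ at $(x_0,t_0)$. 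Since $(x,t)\in E$ forces $(x,t')\in E$ for all $t'\ge t$, letting $t'\to+\infty$ forces $\beta\le 0$. If $\beta<0$, normalize to $\beta=-1$; then $\langle a,x\rangle-t+\gamma\le 0$ on $E$ reads $\langle a,x\rangle+\gamma\le f(x)$ for every $x$, so $f^*(a)=\sup_x\{\langle a,x\rangle-f(x)\}\le-\gamma$, and therefore
\[
f^{**}(x_0)\ge\langle a,x_0\rangle-f^*(a)\ge\langle a,x_0\rangle+\gamma>t_0,
\]
whence $f^{**}(x_0)\ge f(x_0)$ upon letting $t_0\uparrow f(x_0)$. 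The remaining case $\beta=0$ gives a ``vertical'' separating hyperplane carrying no direct information about $f$; the remedy is to fix once and for all a genuine affine minorant $x\mapsto\langle b,x\rangle+\delta\le f(x)$ of $f$ --- such a minorant exists for a proper lower semi-continuous convex function, by applying the supporting-hyperplane corollary to $E$ at a point lying above the relative interior of the effective domain of $f$ --- and to replace $\Lambda$ by $\Lambda+\varepsilon\big(\langle b,x\rangle+\delta-t\big)$ for small $\varepsilon>0$: this stays $\le 0$ on $E$, stays $>0$ at $(x_0,t_0)$, and now has strictly negative $t$-coefficient, reducing to the previous case.

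The step I expect to be the main obstacle is exactly this vertical case $\beta=0$, together with the attendant degeneracies --- properness, and the effective domain of $f$ possibly being lower-dimensional, so that ``a point of $E$ above the relative interior of the effective domain'' must be set up with care within the appropriate affine hull. This is the one place where lower semi-continuity and properness of $f$ enter essentially rather than formally; the rest --- the stability of the class under $(\cdot)^*$, the Fenchel inequality, and the bookkeeping with the separating affine functionals --- is routine.
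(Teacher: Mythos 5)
The paper states this theorem without proof: the appendix explicitly defers all proofs of the convex-analysis facts to \cite{H} and \cite{MT}, so there is no in-text argument to compare yours against. What you give is the standard Fenchel--Moreau proof, and it is essentially correct: the inequality $f^{**}\le f$ is indeed immediate from the definitions, separating $(x_0,t_0)$ from the closed convex epigraph is the right tool, the sign analysis of the $t$-coefficient $\beta$ is correct, and the perturbation $\Lambda+\varepsilon\big(\langle b,x\rangle+\delta-t\big)$ is the standard way to dispose of the vertical case. The one place I would tighten is the existence of the affine minorant $\langle b,\cdot\rangle+\delta\le f$: a supporting hyperplane to $E={\rm epi}(f)$ at a point $(\bar x,f(\bar x))$ can still be vertical when the effective domain of $f$ is lower-dimensional (it may even contain all of $E$), so ``apply the supporting-hyperplane corollary at a point over the relative interior of ${\rm dom}(f)$'' does not quite do the job as stated. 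The cleaner route, using nothing beyond what you already set up, is to re-run your own strict-separation argument at a point $(\bar x,\bar t)$ with $f(\bar x)<\infty$ and $\bar t<f(\bar x)$: there the vertical case $\beta=0$ is impossible outright, since $\Lambda(\bar x,\bar t)>0$ would force $\Lambda(\bar x,f(\bar x))>0$ while $(\bar x,f(\bar x))\in E$. With that substitution the proof is complete. A last pedantic point: if the constant function $+\infty$ is admitted into the space, its transform $-\infty$ leaves the class of $(-\infty,+\infty]$-valued functions, so the ``involution'' claim is really about proper functions together with the degenerate case you noted; that is all one can reasonably ask of the statement as written.
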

\noindent Thus, 
\begin{equation}\label{frecov}
f(x) = \sup_{m} \{ A_m(x) \}
\end{equation}
where $A_m(x) = \langle m, x \rangle - f^*(m)$.\\

\noindent As a corollary to the foregoing theorem, one may derive another fundamental fact: every `sub-linear' function on a finite dimensional real vector space $V$ arises essentially as the support function of a closed convex set.

\begin{thm}
If $C\subset \mathbb{R}^N$ is a closed convex set, then its support function is lower semicontinuous, convex and positively homogeneous. \\
Conversely, every lower semicontinuous function $h$ on $\mathbb{R}^N$, which is  positively homogeneous and convex (equivalently, positively homogeneous and subadditive) is the supporting function of one and only one closed convex set $C$, given by
\[
C = \big\{ x \in \mathbb{R}^N \; : \; h(v) \geq \langle v, x \rangle \text{ for all } v \in \mathbb{R}^N \big\}.
\]
\end{thm}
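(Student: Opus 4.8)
The plan is to handle the two directions separately, reducing the converse to the biconjugation theorem already recorded as Theorem~\ref{CC}. For the forward implication I would verify the three asserted properties of $h_C$ straight from the definition $h_C(v)=\sup_{x\in C}\langle x,v\rangle$. Positive homogeneity, $h_C(\lambda v)=\lambda h_C(v)$ for $\lambda\ge 0$, is obtained by pulling $\lambda$ through the supremum; subadditivity follows from $\sup_{x\in C}(\langle x,u\rangle+\langle x,v\rangle)\le\sup_{x\in C}\langle x,u\rangle+\sup_{x\in C}\langle x,v\rangle$, and under positive homogeneity subadditivity is equivalent to midpoint convexity and hence to convexity (which also disposes of the parenthetical equivalence in the statement). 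Lower semicontinuity I would get by observing that $h_C$ is a pointwise supremum of the linear --- hence continuous, a fortiori lower semicontinuous --- functionals $\langle x,\cdot\rangle$ for $x\in C$, and that a supremum of lower semicontinuous functions is again lower semicontinuous because each of its superlevel sets $\{h_C>a\}$ is a union of open sets.

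For the converse, given $h$ lower semicontinuous, convex and positively homogeneous, I would put $C=\{x:\langle v,x\rangle\le h(v)\ \text{for all}\ v\in\mathbb{R}^N\}$ and first note that $C$ is closed and convex, being an intersection of the closed half-spaces $\{x:\langle v,x\rangle\le h(v)\}$ over those $v$ with $h(v)$ finite (the condition is vacuous where $h(v)=+\infty$), and that positive homogeneity forces $h(0)=0$. The inequality $h_C\le h$ is then immediate from the very definition of $C$, so everything reduces to showing $h_C\ge h$.

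The heart of the argument is to identify the Legendre transform of $h$: I would show that $h^*=I_C$. Fixing $x$, the function $v\mapsto\langle v,x\rangle-h(v)$ is positively homogeneous, so its supremum over $\mathbb{R}^N$ equals $0$ if that function is nowhere positive and $+\infty$ otherwise (rescale any point at which it is positive); and, using $h(0)=0$, being nowhere positive is precisely the membership condition $x\in C$. Thus $h^*=I_C$. Since $C$ is closed and convex, $I_C$ is lower semicontinuous and convex, so Theorem~\ref{CC} applies, and --- combining it with the fact noted in the text that the Legendre transform of an indicator function is the support function --- we obtain
\[
h \;=\; h^{**} \;=\; (I_C)^{*} \;=\; h_C ,
\]
the first equality using that $h$ is itself lower semicontinuous and convex. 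This yields both the existence claim and the displayed formula for $C$; it also shows $C\neq\emptyset$ a posteriori, since $h$, being finite ($=0$) at the origin, is proper.

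Uniqueness then comes for free: if $C_1,C_2$ are closed convex with the same support function $h$, then $I_{C_1}=h^*=I_{C_2}$, whence $C_1=C_2$; equivalently, a closed convex set is the intersection of its supporting half-spaces (the Hahn--Banach corollary recorded above), so it is recovered from $h$ by the displayed formula. I expect no genuine obstacle in any of this; the one spot that demands care is the identification $h^*=I_C$, where the extended-real arithmetic must be handled correctly --- in particular the convention $h(0)=0$ and the vacuity of the defining inequalities of $C$ at directions where $h=+\infty$ --- and where one must check that the hypotheses of Theorem~\ref{CC} (lower semicontinuity and convexity of both $h$ and $I_C$) are actually in force. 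This bookkeeping, rather than any substantive difficulty, is the main thing to get right.
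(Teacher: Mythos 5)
Your argument is correct and follows precisely the route the paper signposts: the appendix omits the proof (deferring to \cite{H} and \cite{MT}), but it explicitly flags the converse as a corollary of the biconjugation theorem (Theorem \ref{CC}) together with the observation that $h_C=(I_C)^*$, which is exactly your reduction via the computation $h^*=I_C$. The one wrinkle is your appeal to ``midpoint convexity and hence convexity'' (midpoint convexity does not imply convexity for general discontinuous functions); replace it by the direct estimate $h(\lambda u+(1-\lambda)v)\le h(\lambda u)+h((1-\lambda)v)=\lambda h(u)+(1-\lambda)h(v)$, which is all you need.
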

\noindent We remark in passing to the next sub-section that, if  $\varrho$ is a defining function for a convex domain $G$, 
the support function of $G$ is given by the Legendre transform of $I_{\mathbb{R}_-} \circ \varrho$, where $I_{\mathbb{R}_-}$ is the indicator function of $\mathbb{R}_-$, the ray of non-positive reals; while this remark may not be useful, the concept of defining function surely is, which we review next.

\subsection*{Defining functions for convex domains.}
\begin{thm}
Let $D \subset\mathbb{R}^N$ be a convex domain. There exists a convex function which is negative on $D$, vanishes precisely on $\partial D$ and is positive on the complement of $\overline{D}$.
\end{thm}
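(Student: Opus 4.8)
The plan is to exhibit the signed distance to $\partial D$ in its support-function form, which builds convexity in from the start. Put $C=\overline{D}$ --- a nonempty closed convex set --- let $h_C$ be its support function (allowed to take the value $+\infty$), and define
\[
\varrho(x)\;=\;\sup_{|u|=1}\bigl(\langle u,x\rangle - h_C(u)\bigr),
\]
the supremum running over the Euclidean unit sphere in $\mathbb{R}^N$. If $D=\mathbb{R}^N$ there is nothing to prove (take $\varrho\equiv -1$), so assume $D\neq\mathbb{R}^N$; then a separation theorem provides at least one direction in which $h_C$ is finite, and fixing $x_0\in C$ gives $\langle u,x\rangle - h_C(u)\le \langle u,x-x_0\rangle\le |x-x_0|$, so $\varrho$ is finite-valued. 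The function $\varrho$ is convex because it is a pointwise supremum of the affine (hence convex) functions $x\mapsto\langle u,x\rangle - h_C(u)$; this is precisely why the support-function form is preferable to the literal $x\mapsto {\rm dist}(x,\overline D)-{\rm dist}(x,\mathbb{R}^N\setminus D)$, with which $\varrho$ agrees geometrically.

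It then remains to verify the three sign conditions, each a one-line consequence of the Hahn--Banach statements recalled in the appendix. If $x\in D$, openness gives a ball $B(x,r)\subset C$, so $\langle u,x\rangle + r=\langle u,x+ru\rangle\le h_C(u)$ for every unit $u$, whence $\varrho(x)\le -r<0$. If $x\in\partial D$, then $x\in C$ forces $\langle u,x\rangle\le h_C(u)$ for all $u$, so $\varrho(x)\le 0$, while the supporting-hyperplane corollary produces a unit vector $u_0$ with $\langle u_0,x\rangle = h_C(u_0)$, giving $\varrho(x)\ge 0$; hence $\varrho(x)=0$. If $x\notin\overline D$, the strict-separation corollary yields, after normalizing its gradient, a unit vector $u_0$ with $\langle u_0,x\rangle - h_C(u_0)>0$, so $\varrho(x)>0$.

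Finally, since $D$ is a convex domain one has ${\rm int}(\overline D)=D$ and $\partial\overline D=\partial D$, so the three regions $D$, $\partial D$, $\mathbb{R}^N\setminus\overline D$ appearing in the computations above are exactly those in the statement, and the proof is complete. I do not anticipate a substantive obstacle; the only points needing a little care are the degenerate case $D=\mathbb{R}^N$ and, in the same vein, ensuring $\varrho\not\equiv -\infty$, which is exactly what separation guarantees once $D$ is a proper subset.
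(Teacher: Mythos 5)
Your proof is correct and is essentially the paper's own argument: both define the candidate function as a supremum of affine functions attached to supporting hyperplanes of $\overline{D}$, so that convexity is automatic and the sign conditions reduce to the Hahn--Banach separation statements. Your parametrization by unit normals via the support function (rather than by boundary points) is a cosmetic variant that has the small merit of making the strict negativity on $D$ explicit through the ball argument.
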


\begin{proof}
Let $p$ be an arbitrary point of $\partial D$. The convexity of $D$ guarantees the existence of a supporting hyperplane for $D$ at $p$ i.e., an affine subspace $L$ of $\mathbb{R}^N$ of codimension $1$ through $p$ with $D$ contained entirely in one, out of the $2$ connected components of $\mathbb{R}^N \setminus L$. Now, if we let $a_p(x)$ denote the affine function which defines $L$, then after a change of sign if necessary we may -- and will! -- assume that $a_p$ is negative throughout $D$. Needless to say, $a_p(p)=0$. Now denote by $\mathcal{F}$ the family of all such affine functions $a_p$ as $p$ varies through $\partial D$. Let
\[
A(x) = \sup_{\mathcal{F}} \{ a_p(x) \}.
\]
Clearly, $A(x)$ is a convex function which is non-negative on $\overline{D}$ which vanishes precisely on $\partial D$. Further, by invoking a suitable separation theorem, we may assure ourselves that $A$ is actually positive on all of $\mathbb{R}^N \setminus \overline{D}$.
\end{proof}

\noindent We shall refer to the function guaranteed by the above theorem as a defining function. With some regularity assumptions about 
the boundary of the domain, it is natural to impose further conditions on the defining function so that it encodes the additional regularity
features. A customary definition for defining functions for smoothly bounded domains -- not necessarily convex -- is as follows:
\begin{defn}
Let $D$ be a domain in $\mathbb{R}^N$. Then $D$ is said to have smooth boundary, if there exists a smooth function $\varrho: \mathbb{R}^N \to \mathbb{R}$ such that  $\varrho$ is positive on the complement of $\overline{\mathbb{D}}$,
\[
D = \{ x \in \mathbb{R}^N \; :\; \varrho(x) <0\},
\] 
$\varrho$ vanishes precisely on $\partial D$ and its gradient vector is non-zero at all points of $\partial D$.  The function $\varrho$ is said to be a
(global) smooth defining function.
\end{defn}

\noindent It is not necessary to have $\varrho$ defined on all of $\mathbb{R}^N$, a tubular neighbourhood of $\partial D$ will suffice; there is also the notion of a local defining function and how one may obtain a global defining function by patching together local defining functions via 
standard partition-of-unity techniques and other results about the relationships between any two defining functions. These matters can be found in standard texts; a reference relevant for the present subsection is 
\cite{HM}. We shall only state the condition of convexity for smoothly bounded domains formulated via the defining function as:
\begin{thm}
Let $D$ be a domain in $\mathbb{R}^N$ with $C^2$-smooth boundary. Let $\varrho$ be a $C^2$-defining function for $D$ near $p\in \partial D$. Then there is an open ball $U$ centered at $p$ such that $U \cap D$ is convex if and only if the Hessian of $\varrho$ satisfies the condition:
\[
\sum\limits_{j,k=1}^{N} \frac{\partial^2 \varrho(p)}{\partial x_j \partial x_k} v_j v_k \geq 0
\]
for all $p \in \partial D$ and $v \in T_p(\partial D)$.
\end{thm}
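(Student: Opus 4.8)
\noindent The plan is to localise the boundary near $p$ as the graph of a $C^2$ function, invoke the classical fact that the region lying above such a graph is convex exactly when the graphing function is convex (equivalently, has positive semidefinite Hessian), and then identify the Hessian of that graphing function with the Hessian of $\varrho$ restricted to the tangent spaces along $\partial D$. A preliminary remark: the displayed condition is to be read as holding at \emph{every} boundary point in a neighbourhood of $p$ (the letter $p$ in it ranging over $\partial D$), not merely at the single fixed $p$; positive semidefiniteness of the restricted Hessian at one point does not force local convexity, as the domain $\{x_N > x_1^3\}$ near the origin shows, its defining function $\varrho = x_1^3 - x_N$ having vanishing Hessian at $0$ while the domain is convex in no ball about $0$.

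\noindent First I would normalise. Translating $p$ to the origin, composing with an invertible affine map, and multiplying $\varrho$ by a positive constant --- operations affecting neither the existence of a ball $U$ with $U \cap D$ convex nor positive semidefiniteness of the form $v \mapsto \sum_{j,k} \varrho_{jk} v_j v_k$ restricted to a subspace --- I may assume $p = 0$ and $\nabla\varrho(0) = -e_N$, so that $T_0(\partial D) = \mathbb{R}^{N-1} \times \{0\}$. Since $\partial\varrho/\partial x_N(0) = -1 \neq 0$, the implicit function theorem furnishes a ball $U_0$ about $0$ and a $C^2$ function $g$ on a neighbourhood $W$ of $0$ in $\mathbb{R}^{N-1}$ with $g(0) = 0$, $\nabla g(0) = 0$, such that $\partial D \cap U_0 = \{(x', g(x')) : x' \in W\}$, and --- because $\varrho$ is strictly decreasing in $x_N$ near $0$ --- $D \cap U_0 = \{x_N > g(x')\} \cap U_0$.

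\noindent Next I would chain three equivalences (subscripts on $\varrho$ and on $g$ below denoting partial derivatives, evaluated respectively at $q = (x', g(x'))$ and at $x'$). \emph{(i)} For a sufficiently small ball $U \subset U_0$ about $0$, the set $U \cap D$ is convex if and only if $g$ is convex on a neighbourhood of $0$: one direction holds because the epigraph of a convex function is convex and intersections of convex sets are convex; for the other, given $U \cap D$ convex, apply convexity to the midpoint of the points $(x', g(x') + \varepsilon)$ and $(y', g(y') + \varepsilon)$ of $U \cap D$ and let $\varepsilon \to 0^+$ to obtain midpoint convexity of $g$, hence convexity by continuity. \emph{(ii)} A $C^2$ function is convex on a convex open set if and only if its Hessian is positive semidefinite throughout. \emph{(iii)} Differentiating $\varrho(x', g(x')) \equiv 0$ once gives $\varrho_j + \varrho_N g_j = 0$ for $j < N$, so the vectors $\tau_j := e_j + g_j(x') e_N$, $j = 1, \ldots, N-1$, are linearly independent and annihilated by $\nabla\varrho(q)$, hence form a basis of $T_q(\partial D)$; differentiating once more and substituting yields, after a direct expansion,
\[
\sum_{a,b=1}^{N} \frac{\partial^2 \varrho(q)}{\partial x_a \partial x_b} (\tau_j)_a (\tau_k)_b = \varrho_{jk} + \varrho_{jN} g_k + \varrho_{Nk} g_j + \varrho_{NN} g_j g_k = -\varrho_N(q)\, g_{jk}(x').
\]
As $\varrho_N(q) < 0$ for $q$ near $0$, the restriction of $v \mapsto \sum_{a,b} \varrho_{ab}(q) v_a v_b$ to $T_q(\partial D)$ equals $|\varrho_N(q)|$ times the Hessian $D^2 g(x')$ written in the basis $\{\tau_j\}$, so one is positive semidefinite if and only if the other is. Composing \emph{(i)}, \emph{(ii)} and \emph{(iii)} (the last applied at all $q \in \partial D$ near $0$) delivers both implications of the theorem.

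\noindent I expect the main obstacle to be step \emph{(iii)}: the second-order implicit-differentiation bookkeeping and the clean identification $\mathrm{Hess}\,\varrho|_{T_q(\partial D)} = |\varrho_N|\, D^2 g$, together with the linear-algebra observation that positive semidefiniteness of a symmetric bilinear form on a subspace may be tested in any single basis of that subspace. The other point requiring vigilance is the logical reading of the hypothesis noted in the first paragraph --- because of the inflection phenomenon the Hessian condition must be imposed throughout a boundary neighbourhood of $p$, not at $p$ alone --- and, correspondingly, checking that the property ``there exists a ball $U$ with $U \cap D$ convex'' survives the initial affine normalisation, any ball inscribed in the image ellipsoid serving to restore it.
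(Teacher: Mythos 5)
Your proof is correct and complete. Note that the paper itself offers no proof of this statement: it appears in the appendix of recalled facts from convex analysis, where the author explicitly says ``Proofs therefore, are omitted'' and defers to H\"ormander, Magaril-Il'yaev--Tikhomirov, and (for this subsection specifically) Herbig--McNeal, so there is no in-paper argument to compare yours against. What you give is the standard graph-localization proof, and all three links in your chain check out: the implicit-function-theorem reduction to $D\cap U_0=\{x_N>g(x')\}$, the midpoint-plus-continuity argument for the equivalence of local convexity of the domain with convexity of $g$, and the second-order implicit differentiation identity $\sum_{a,b}\varrho_{ab}(q)(\tau_j)_a(\tau_k)_b=-\varrho_N(q)\,g_{jk}(x')$ with $\tau_j=e_j+g_j e_N$ a basis of $T_q(\partial D)$, which correctly identifies the restricted Hessian of $\varrho$ with $|\varrho_N|\,D^2g$ up to the choice of basis. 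Your preliminary remark is also a genuine improvement on the statement as printed: the quantifier ``for all $p\in\partial D$'' must indeed be read as ranging over boundary points in a neighbourhood of the fixed $p$ (positive semidefiniteness at the single point $p$ is not sufficient, as your $\{x_N>x_1^3\}$ example shows, and the global reading is incompatible with $\varrho$ being defined only near $p$), a point the paper's formulation leaves ambiguous.
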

\noindent Thus at least for domains whose boundaries are $C^2$-smooth, there is a simple analytical local characterization of convexity and their convexity is determined by their boundaries. 

\subsection*{Star-like sets and gauge functionals.}
\begin{defn}
A subset $S$ of $\mathbb{R}^N$ is termed star-like with respect to some point $p \in S$ if the line segment joining any point of $S$ to $p$ is contained in $S$. 
\end{defn}
\noindent The point $p$ as in the definition above, is sometimes referred to as a center of $S$. The set of such centers for $S$ form a convex subset of $S$. The simplest star-like sets are the convex sets, being star-like with respect to each of its points.
We shall henceforth deal only with star-like subsets $S$ which have origin as one of its centers. Infact, we shall restrict attention only to star-like domains $D$ with respect to the origin. For such a domain $D$, first define the {\it radial function}
 $\rho_D: \mathbb{R}^N \setminus \{0\} \to \mathbb{R}^+ \cup \{+\infty\}$ by
\[
\rho_D(x) = \sup \{ \lambda \in \mathbb{R}_+ \; : \; \lambda x \in D \}
\]
which is a strictly positive function owing to the fact that the origin is an {\it interior} point of $D$ and takes the value $+\infty$ precisely when $D$ is unbounded. Further note that $\rho = \rho_D$ is homogeneous of degree $-1$ i.e., $\rho(t x) = t^{-1} \rho(x)$ for all $t \in \mathbb{R}^+$ and the (scaled) point $\rho_D(x)x$ lies 
on the boundary $\partial D$ for all $x\neq 0$. Next, define the {\it gauge functional} (also called the Minkowski functional) of the domain $D$,
$g_D: \mathbb{R}^N \to \mathbb{R}_+$ first for non-zero vectors as the reciprocal of the radial function:
\[
g_D(x) = \frac{1}{\rho_D(x)}
\]
and then setting it equal to $0$ for its value at the origin. Thus, while $\rho_D$ may take on $+\infty$, the gauge function is always finite-valued. We shall merely list the properties of this well-known function and show only the relatively non-trivial property (iii) of this list below.
\begin{thm}
Let $D$ be a star-like domain, with respect to the origin in $\mathbb{R}^N$. Then its gauge function $g=g_D$ has the following properties:
\begin{enumerate}
\item[(i)] $g(tx) = t g(x)$ for all $t \in \mathbb{R}_+$ and $x\in \mathbb{R}^N$,
\item[(ii)] $D = \{ x \in \mathbb{R}^N \; : \; g(x)<1\}$,
\item[(iii)] $g$ is upper semicontinuous,
\item[(iv)] Convexity of $D$ is equivalent to $g$ satisfying the triangle inequality.
\item[(v)] If $D$ is convex, then $g$ is continuous.
\end{enumerate}
\end{thm}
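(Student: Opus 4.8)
The plan is to treat the five items in the order listed, since each one leans on those before it; as the text already flags, the only item needing real care is (iii), and even there the substance is the interval structure of the radial slices that one sets up while proving (ii). For (i): fix $t>0$ and $x\neq 0$ and substitute $\mu=t\lambda$ in $\rho_D(tx)=\sup\{\lambda\ge 0:\lambda(tx)\in D\}$ to get $\rho_D(tx)=t^{-1}\rho_D(x)$, hence $g_D(tx)=1/\rho_D(tx)=t\,g_D(x)$; the case $x=0$ is immediate from $g_D(0)=0$. For (ii) the key remark — reused in (iii) — is that for $x\neq 0$ the slice $T_x=\{\lambda\ge 0:\lambda x\in D\}$ is the half-open interval $[0,\rho_D(x))$: it is downward closed because if $\lambda_0 x\in D$ then the whole closed segment from $0\in D$ to $\lambda_0 x$ lies in $D$ by star-likeness, and it is relatively open in $[0,\infty)$ because $D$ is open, so it cannot contain its own supremum. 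Hence $g_D(x)<1\iff\rho_D(x)>1\iff$ some $\lambda>1$ lies in $T_x$, which by downward closedness forces $1\in T_x$, i.e. $x\in D$; conversely $x\in D$ forces $(1+\delta)x\in D$ for small $\delta>0$ by openness, so $g_D(x)<1$. Adjoining the origin (where $g_D=0$ and $0\in D$) gives $D=\{g_D<1\}$.

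For (iii), recall that $g_D$ is upper semicontinuous precisely when every strict sublevel set $\{x:g_D(x)<c\}$ is open. If $c\le 0$ this set is empty since $g_D\ge 0$. If $c>0$, homogeneity (i) rewrites $g_D(x)<c$ as $g_D(c^{-1}x)<1$, and (ii) rewrites that in turn as $c^{-1}x\in D$, i.e. $x\in cD$; and $cD$ is open because $D$ is. So all strict sublevel sets of $g_D$ are open, which is exactly upper semicontinuity.

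For (iv): if $D$ is convex and $s>g_D(x)$, $t>g_D(y)$, then $x/s,y/t\in D$ by (i) and (ii), so the convex combination $(x+y)/(s+t)$ lies in $D$, whence $g_D(x+y)<s+t$; letting $s\downarrow g_D(x)$ and $t\downarrow g_D(y)$ gives the triangle inequality. Conversely, subadditivity together with (i) makes $g_D$ sublinear, hence convex, so $D=\{g_D<1\}$ is a strict sublevel set of a convex function and is therefore convex. For (v), when $D$ is convex $g_D$ is a convex function finite on all of $\mathbb{R}^N$ (it equals $0$ wherever $\rho_D=+\infty$), hence continuous on $\mathbb{R}^N$ by the continuity theorem for convex functions recorded earlier in this appendix.

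I do not expect a genuine obstacle. The one point worth spelling out — and presumably why (iii) is singled out — is that star-likeness with respect to the origin is exactly what turns the radial slices $T_x$ into intervals, so that the sublevel sets of $g_D$ become honest dilates of $D$; absent convexity one obtains openness of the \emph{strict} sublevel sets but not closedness of the weak ones (indeed $\{g_D\le 1\}$ need not be closed, so $g_D$ is in general only upper and not lower semicontinuous), which is precisely why continuity surfaces only in the convex case (v).
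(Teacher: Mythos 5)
Your proposal is correct, and for the one item the paper actually proves --- (iii), which the text singles out as ``the relatively non-trivial property'' --- you take a genuinely different and much shorter route. The paper establishes (iii) by proving, in a separate lemma, that the radial function $\rho_D$ is lower semicontinuous; that proof is a pointwise contradiction argument ($\rho(x_j)<\rho(x_0)-\epsilon$ along a sequence $x_j\to x_0$) carried out with the ``umbra'' of a small ball near the scaled boundary point $\rho(x_0)x_0$ and a similar-triangles estimate, and upper semicontinuity of $g=1/\rho$ is then read off. You instead observe that for $c>0$ the strict sublevel set $\{g_D<c\}$ equals the dilate $cD$ (via homogeneity and the identity $D=\{g_D<1\}$ of item (ii)), which is open because $D$ is; together with the emptiness of the sublevel sets for $c\le 0$ this is exactly upper semicontinuity. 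Your argument buys brevity and transparency --- the interval structure of the radial slices $T_x=[0,\rho_D(x))$ does all the work --- while the paper's hands-on geometric proof introduces the umbra construction that it then reuses in the subsequent propositions on proper star domains and the topology of complete Reinhardt domains, so the extra effort there is not wasted in context. Your treatments of (i), (ii), (iv) and (v), which the paper omits as routine, are all sound; the only point worth making explicit in (iv) is that when $g_D(x)=0$ one still has $x/s\in D$ for every $s>0$, which your limiting argument implicitly uses and which does hold since $g_D(x/s)=g_D(x)/s=0<1$.
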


\noindent The gauge functional of $D$ is infact determined uniquely by the properties $(i)$ and $(ii)$ of the listing above. An interesting issue about a star-like domain is notwithstanding the description of the domain by its gauge functional as in $(ii)$, it is in general not possible to claim an equality
in the following trivia: $\{ x \in \mathbb{R}^N \; : \; g(x)=1\} \subset \partial D$. This is indeed true when $D$ is convex and more generally for 
{\it proper} star domains i.e., domains $D$ star-like with respect to the origin for which every ray intersects the boundary $\partial D$ at no more than one point; in such a circumstance, claim can be made about the continuity of $g_D$ as well. Before getting to that, let us first establish the minimal regularity claim that we can make of such functions: 

\begin{lem}
Let $D$ be any star domain, not necessarily bounded, star-like with respect to the origin in $\mathbb{R}^N$. Let 
$\rho= \rho_D: \mathbb{R}^N \to \mathbb{R}_+ \cup \{\infty\}$ denote its radial function. Then $\rho$ is lower semicontinuous.
\end{lem}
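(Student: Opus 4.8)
The plan is to check the defining property of lower semicontinuity of $\rho = \rho_D$ directly, namely that for every $x_0$ and every real number $c$ with $c < \rho(x_0)$ there is a neighbourhood $U$ of $x_0$ on which $\rho > c$; equivalently, that each superlevel set $\{x : \rho(x) > c\}$ is open. The case $c < 0$ is vacuous since $\rho \geq 0$ everywhere, so throughout I may assume $0 \leq c < \rho(x_0)$. If one takes $\rho$ to be defined also at the origin with value $+\infty$, that point is disposed of at once: $D$ contains a ball of some radius $r$ about $0$, so $\rho(x) \geq r/\vert x \vert \to +\infty$ as $x \to 0$, which is lower semicontinuity at $0$.

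The one step that is not purely formal, and which I would isolate as a preliminary observation, is this: for any $x \neq 0$ and any $\lambda$ with $0 \leq \lambda < \rho_D(x)$, the point $\lambda x$ already lies \emph{in} $D$ (not merely in $\overline{D}$). Indeed, since $\lambda$ is strictly below the supremum $\rho_D(x) = \sup\{\nu \geq 0 : \nu x \in D\}$, it is not an upper bound, so there is $\mu > \lambda$ with $\mu x \in D$; because $D$ is star-like with respect to the origin, the segment joining $\mu x$ to $0$ lies in $D$, and $\lambda x = (\lambda/\mu)(\mu x)$ is a point of that segment. This is exactly where the hypothesis of star-likeness with respect to the origin enters (together with $0 \in D$, which holds since the origin is an interior point of $D$).

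Granting the observation, the rest is routine. Fix $x_0 \neq 0$ and $0 \leq c < \rho(x_0)$, and choose $\lambda_0$ with $c < \lambda_0 < \rho(x_0)$ — such a $\lambda_0$ exists whether $\rho(x_0)$ is finite or equal to $+\infty$. By the observation $\lambda_0 x_0 \in D$, and since $D$ is open there is $\varepsilon > 0$ with the open ball of radius $\varepsilon$ about $\lambda_0 x_0$ contained in $D$. Then for every $x$ with $\vert x - x_0 \vert < \varepsilon/\lambda_0$ we have $\vert \lambda_0 x - \lambda_0 x_0 \vert < \varepsilon$, hence $\lambda_0 x \in D$, hence $\rho(x) \geq \lambda_0 > c$. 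Thus $U = \{x : \vert x - x_0 \vert < \varepsilon/\lambda_0\}$ is the desired neighbourhood, and $\rho$ is lower semicontinuous at $x_0$.

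I do not expect a genuine obstacle here; the only point demanding a little care is the preliminary observation — that dipping below the supremum lands one strictly inside $D$ — and the separate bookkeeping needed so that no boundedness assumption is used (handling the value $+\infty$ at the origin and, via the choice $\lambda_0 < \rho(x_0)$, when $\rho(x_0) = +\infty$). Everything else is the standard "open set pulls back under a linear rescaling" argument.
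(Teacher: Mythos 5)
Your proof is correct, and it is genuinely simpler and more direct than the one in the paper. You verify lower semicontinuity by showing each superlevel set $\{x : \rho(x) > c\}$ is open: the key observation that $\lambda x \in D$ whenever $0 \le \lambda < \rho_D(x)$ (because $\lambda$ sits strictly below the supremum, so some $\mu x \in D$ with $\mu > \lambda$, and star-likeness drags the whole segment from $\mu x$ to the origin into $D$) reduces everything to the openness of $D$ and a linear rescaling of a ball. The paper instead argues by contradiction: it first reduces to a small ball about the origin using the homogeneity $\rho(tx) = t^{-1}\rho(x)$, assumes a sequence $x_j \to x_0$ with $\rho(x_j) < \rho(x_0) - \epsilon$, and derives a contradiction via an explicitly quantified geometric construction — a ball $\hat{B}_0$ about a point $\delta\rho(x_0)x_0$ slightly inside $\partial D$, its ``umbra'' (convex hull with the origin), and a similar-triangles estimate showing the ray through $x_k$ must pierce $\hat{B}_0 \subset D$ beyond the threshold $\rho(x_k/\vert x_k\vert)$. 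Both arguments invoke star-likeness at essentially the same logical juncture (to guarantee that points strictly below the radial threshold lie in $D$), but your route avoids the contradiction framework, the reduction to a neighbourhood of the origin, and the chain of $\epsilon$--$\delta$ estimates; the paper's construction buys nothing extra for this lemma, though its umbra machinery is reused later in the appendix for the propriety and boundary-homeomorphism results. Your separate treatment of the origin and of the case $\rho(x_0) = +\infty$ correctly covers the unbounded case, which the paper's proof leaves as ``similar ideas with the details even simpler and therefore omitted.''
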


\begin{proof}
\noindent We shall only verify the lower semicontinuity of $\rho$ on a small ball $B$ about the origin contained in $D$. This suffices as $\rho$
is homogeneous of degree $-1$. Note that $\rho>1$ throughout $B$ as $B \subset D$. So, let $0 \neq x_0 \in B$ and consider first the case when $\rho(x_0)$ 
is finite. Suppose, to argue by contradiction, that for some $\epsilon >0$, there were a sequence $x_j \to x_0$ with
\[
\rho(x_j)< \rho(x_0) - \epsilon
\]
for all $j$. The idea in a nutshell is this: as $\rho(x)x$ is the last point on the ray $R_x$ through $x$ (originating at the origin ofcourse!) which is adherent to $D \cap R_x$, the above inequality must violate the fact that $\tilde{x}_0=\rho(x_0)x_0$ can be approached by a sequence of (interior) points of $D$, to contradict that $\tilde{x}_0 \in \partial D$. We now proceed towards transforming this
idea into a proof. Let $\hat{x}_0 := \delta \tilde{x}_0$ with $\delta$ a positive number less than $1$, so that $\hat{x}_0 \in D$; a more precise choice for the number 
$\delta$ shall be specified later. Consider a ball $\hat{B}_0$ about $\hat{x}_0$ contained within $D$ of a radius $\hat{r}$ to be specified later; consider forthwith, the
`umbra' of $\hat{B}_0$ with respect to the origin i.e., the convex hull of $\hat{B}_0$ with the origin. Observe that this umbra is contained in $D$
as $D$ is star-like and contains a ball about $x_0$, say $B_0$ -- the radius of this ball can be taken to as large as 
$\hat{r}(\vert x_0 \vert/\vert \tilde{x}_0 \vert)$ as can be ascertained by a `similar-triangles' argument, but that is a digression. Getting to the point here, these balls are contained within $D$ to begin with.
\medskip\\
\noindent Pick any member $x_k$ of the sequence $\{x_j\}$, in the neighbourhood $B_0$ of $x_0$ and consider the ray $R_k$ shooting through $x_k$. As $B_0$
is contained in the aforementioned umbral region, $R_k$ must intersect $\hat{B}_0$ in a segment $s$, say. Notice that this segment must consist only of points whose distance from the origin is strictly bigger than $\vert \hat{x}_0 \vert -\hat{r}$. If this in turn can be shown to be bigger than
$\rho(x_k/\vert x_k \vert)$ for an appropriate choice
of $(\hat{x}_0,\hat{r})$, we will be done for this will imply that points of $s$ cannot belong to $D$ because $\rho(x_k/\vert x_ k \vert)$ is the upper 
threshold for a point on the ray $R_k$ to belong to $D$; this contradicts $\hat{B}_0 \subset D$.
\medskip\\
\noindent We shall choose the radius $\hat{r}$ small enough and 
the point $\hat{x}_0$, close enough to $\tilde{x}$ to the end of bounding $\vert \hat{x}_0 \vert -\hat{r}$ from below, as follows. 
Pick any positive number $M$ bigger than $\rho(x_0) - \epsilon$ and $\delta \in (0,1)$ with $1/\delta\; -1 <\epsilon/4M$. As
$\vert x_k \vert/\vert x_0 \vert \to 1$, we may choose $n \in \mathbb{N}$ large so that for all $k>n$,
\[
\Big\vert \frac{\vert x_k \vert}{\delta \vert x_0 \vert} - \frac{1}{\delta} \Big\vert < \frac{\epsilon}{4M}.
\]
Subsequently note that
\[
\Big\vert 1 - \frac{\vert x_k \vert}{\delta \vert x_0 \vert} \Big\vert < \Big\vert 1 - \frac{1}{\delta} \Big\vert
+ \Big\vert \frac{1}{\delta} - \frac{\vert x_k \vert}{\delta \vert x_0 \vert} \Big\vert < \frac{\epsilon}{2M}
\]
from which we get 
\begin{equation}\label{intercalc}
\rho(x_k) \frac{\big \vert\delta \vert x_0 \vert - \vert x_k \vert \big\vert}{\delta \vert x_0 \vert} < \frac{\epsilon}{2}.
\end{equation}
Now take for $\hat{r}$, any number less than $\epsilon \delta/2$. Then we have the following string of estimates:
\begin{align*}
\vert \hat{x}_0 \vert -\hat{r} &= \vert\delta \tilde{x}_0 \vert - \hat{r}\\
&> \delta \rho(x_0) \vert x_0 \vert - \epsilon \delta \vert x_0 \vert/2\\
&=\Big(\rho(x_0) - \epsilon/2 \Big) \delta \vert x_0 \vert \\
&>\Big(\rho(x_k) + \epsilon/2 \Big) \delta \vert x_0 \vert\\
&> \rho(x_k) \vert x_0 \vert + \rho(x_k) \Big\vert \delta \vert x_0 \vert - \vert x_k \vert \Big\vert\\
&> (1-\delta) \rho(x_k) \vert x_0 \vert + \rho(x_k) \vert x_k \vert \\
&>\rho(x_k / \vert x_k \vert).
\end{align*}
This leads to the sought-after contradiction as mentioned earlier and finishes the proof for the case when $\rho(x_0)$ is finite. The case when 
$\rho(x_0)$ is infinite is based on similar ideas with the details even simpler and therefore omitted.
\end{proof}

\begin{defn}
Let $B$ be ball about the origin and $p$ a point outside $B$. We shall refer to the convex hull of $B$ with $p$ as the umbra of $B$ with respect to $p$. 
Let $D$ be a domain in $\mathbb{R}^N$ containing the origin. We shall say that $D$ is umbral with respect to $p \in \partial D$ if there
is a ball $B_p$ about the origin contained in $D$, such that the {\it umbra of $B_p$ with vertex at $p$} i.e., the convex hull of $B_p$ with $p$, is contained 
in $D$. Then call $D$ an umbral domain.
\end{defn}
 
\noindent Observe that any such  domain $D$ is necessarily star-like with respect to the origin. We leave this as an easy exercise by formation of
successive umbras with vertex at the apriori possible various end-points of the intervals of intersection of a fixed ray $R$ with $D$ -- note that $R \cap D$ being 
a copy of an open subset of $\mathbb{R}$ must be expressible as a disjoint union of countably many open intervals, so that the set of end-points must be a discrete set.  

\begin{lem}
Let $D$ be any star domain not necessarily bounded, star-like with respect to the origin in $\mathbb{R}^N$. Let $R$ be a ray emanating from the origin
which hits $\partial D$. Suppose that $D$ is umbral with respect to every point in $R \cap \partial D$. Then $R$ actually intersects $\partial D$ 
at only one point. In particular if $D$ is umbral, any ray emanating from the origin, intersects the boundary $\partial D$ at most at one point. If 
$D$ is a bounded umbral domain then each such ray intersects the boundary at a unique point.
\end{lem}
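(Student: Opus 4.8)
The plan is to extract from the hypothesis one elementary fact and then deduce everything from it: \emph{if $q \in R \cap \partial D$ and $D$ is umbral with respect to $q$, then the whole segment from the origin up to $q$, with the endpoint $q$ removed, is contained in $D$.} Once this is in hand, two distinct boundary points on $R$ cannot coexist, since the one farther from the origin would force the nearer one into the interior of $D$.

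First I would write $R = \{ tv : t \ge 0\}$ for a unit vector $v$; since the origin is an interior point of $D$, any $q = sv$ in $R\cap\partial D$ has $s>0$. Assume $D$ is umbral with respect to such a $q$ and let $B_q$ be the ball about the origin supplied by the definition, so that the convex hull of $B_q$ with $q$ lies in $D$ apart from the vertex $q$ (this is how the definition must be read, since $q\in\partial D$ cannot itself lie in the open set $D$). For $0 \le t < s$ the point $tv$ is the convex combination $(1-t/s)\cdot 0 + (t/s)\,q$; as $0 \in B_q$ and $t/s \in [0,1)$, it lies in that convex hull and is distinct from $q$, hence $tv \in D$. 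This proves the fact, namely $\{\, tv : 0 \le t < s \,\} \subset D$.

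Next comes the contradiction. If $R$ met $\partial D$ in two distinct points $q_1 = s_1 v$ and $q_2 = s_2 v$ with $0 < s_1 < s_2$, then applying the fact to $q_2$ (which is legitimate, as $D$ is assumed umbral at every point of $R\cap\partial D$) gives $\{tv : 0\le t<s_2\}\subset D$, so in particular $q_1 = s_1 v \in D$, contradicting $q_1\in\partial D$. Hence $R\cap\partial D$ contains at most one point, and as it is nonempty by hypothesis, it contains exactly one. The ``in particular'' clause follows at once: if $D$ is umbral, i.e.\ umbral at each of its boundary points, then a ray from the origin that meets $\partial D$ meets it in a single point by what was just shown, and one that does not meet $\partial D$ trivially meets it in at most one point.

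Finally, for a \emph{bounded} umbral domain I would verify that every ray from the origin actually hits $\partial D$. The set $T=\{t\ge 0 : tv\in D\}$ is open in $[0,\infty)$, contains a neighbourhood of $0$ (the origin being interior), and is bounded above because $D$ is bounded; put $s=\sup T\in(0,\infty)$. Then $sv\in\overline D$ as a limit of points of $D$, while $sv\notin D$ since otherwise openness of $D$ would put $(s+\varepsilon)v$ in $D$ for small $\varepsilon$, contradicting the choice of $s$; hence $sv\in\partial D$, and the first part forces $R\cap\partial D=\{sv\}$. I do not expect a genuine obstacle here; the only point needing a word of care is the reading of the word ``umbral'' as explained above, and the one substantive step is the segment fact of the second paragraph.
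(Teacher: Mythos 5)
Your proof is correct and follows essentially the same route as the paper: both arguments use the umbra at the farther boundary point (together with the fact that the origin lies in $B_q$) to place the nearer intersection point in $D$, contradicting that it is a boundary point, and both read the definition of ``umbral'' as meaning the umbra lies in $D\cup\{q\}$. Your explicit segment lemma and your verification that a ray in a bounded domain actually meets $\partial D$ are just tidier spellings-out of steps the paper leaves implicit.
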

\begin{proof}
By hypothesis, for any point $q \in R \cap \partial D$, there a ball $B_q$ about the origin contained in $D$ such that its convex hull with $q$ is also contained in $D$ except for $q$. If the ray $R$ emanating from the origin intersects $\partial D$ at more than one point, then the point $p$ of intersection nearest to
the origin on this ray is contained in the (interior of) the umbra of $B_q$ with vertex at any of the other points $q$ of $R \cap \partial D$, which in turn by
hypothesis is contained in $D\cup \{q\}$, contradicting the fact that $p$ is a boundary point of our domain $D$. 
\end{proof}

\noindent We remark that  among the star domains, say the bounded ones for simplicity, it is not the umbral ones alone which are proper i.e., have the unique boundary intersection property as in the foregoing theorem. Indeed, balls in the $l^p$-metric for $0<p<1$ i.e., $d(x,y) = \vert x -y \vert^p$, 
centered at the origin are star-like, non-umbral at the cusps but do have the property that each ray through the origin intersects the boundary at a unique point; in short are proper star domains. We remark that the terminology of umbras is not standard. They have been invented only to bring out certain
key features of complete Reinhardt domains, which leads to them being proper star domains, in particular.
Moreover infact, from the topological point of view, umbral domains are no more special than any star domain for, it is true that any star domain is homeomorphic
to the ball -- an excellent exposition can be found in \cite{Berg}. However, proper star domains which are bounded -- in particular, bounded umbral domains -- have the added feature that their closures are homeomorphic to the closed ball, of importance for us; for this reason and for immediate reference, 
we present a proof of this feature below. As our purpose here was to attain a topological characterization of complete Reinhardt 
domains and forthwith, study their boundaries and gauge function, we have chosen to include a presentation of their topological characterization (along-with their boundaries). To this end, we begin with: 

\begin{prop}
Every complete Reinhardt domain in $\mathbb{C}^N$ is umbral with respect to each of its boundary points away 
from the complex coordinate frame
$A$ and is consequently a proper star domain i.e., every ray intersects the boundary at most at one point.
\end{prop}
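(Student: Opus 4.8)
The plan is to establish the two assertions in turn: first the umbral property at boundary points off $A$, and then deduce properness partly from it (via the lemma just proved) and partly by a short direct argument that covers the rays meeting $A$.

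First I would record two easy consequences of completeness: putting all $\lambda_j=0$ in the definition shows $0\in D$, and putting $\lambda_1=\cdots=\lambda_N=t$ for $t\in[0,1]$ shows $D$ is star-like with respect to the origin, so $D$ is a star domain to which the earlier lemma applies. The engine of the umbral claim is the observation that if $p\in\partial D$ has no vanishing coordinate, then $D$ contains the open polydisc spanned by $p$, namely $P^-:=\{z\in\mathbb{C}^N : |z_j|<|p_j|\ \text{for all } j\}$. Indeed, choose $z^{(k)}\in D$ with $z^{(k)}\to p$; since $|z^{(k)}_j|\to|p_j|>0$, for any fixed $z\in P^-$ we have $|z_j|<|z^{(k)}_j|$ for all $j$ once $k$ is large, and then $z$ is obtained from $z^{(k)}$ by coordinatewise multiplication by the scalars $\lambda_j=z_j/z^{(k)}_j\in\overline{\Delta}$, so $z\in D$ by completeness. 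Now take $B_p:=B(0,\rho)$ with any $0<\rho<\min_j|p_j|$. Then $B_p\subset P^-\subset D$, and for $s\in[0,1)$ and $b\in B_p$ every coordinate of the convex combination satisfies
\[
|sp_j+(1-s)b_j|\ \le\ s|p_j|+(1-s)|b|\ <\ s|p_j|+(1-s)\min_k|p_k|\ \le\ |p_j|,
\]
so $sp+(1-s)b\in P^-\subset D$; the only point of the umbra of $B_p$ at $p$ not of this form is the apex $p$. Hence $D$ is umbral with respect to every $p\in\partial D\setminus A$.

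For the second assertion, consider a ray $R=\{tu:t\ge 0\}$ from the origin. If $u\notin A$ then $tu\in A$ would force $u\in A$, so $R\cap\partial D$ consists only of points of $\mathbb{C}^N\setminus A$; by the umbral property just established and the earlier lemma, $R$ meets $\partial D$ in at most one point. For the remaining rays, with $u\in A$, I would argue directly (the argument in fact re-covers all rays). By star-likeness and openness, $\{t\ge 0:tu\in D\}$ is an interval $[0,t^\ast)$. Suppose $t_0u\in\overline{D}$ for some $t_0>t^\ast$. Then $\tau(t_0u)\in\overline{\tau(D)}$, so there are $a^{(k)}\in\tau(D)$ with $a^{(k)}\to t_0\,\tau(u)$. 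Fix $t<t_0$: for each index $j$ with $u_j\neq 0$ we have $a^{(k)}_j\to t_0|u_j|>t|u_j|$, hence $a^{(k)}_j>t|u_j|$ for large $k$, while for $j$ with $u_j=0$ the inequality $t|u_j|=0\le a^{(k)}_j$ is automatic; thus $\tau(tu)\le a^{(k)}$ coordinatewise, so $\tau(tu)\in\tau(D)$ by completeness, and then $tu\in D$ since $D$ is multi-circular. As this holds for all $t<t_0$ we get $t^\ast\ge t_0$, a contradiction. Therefore $\overline{D}\cap R=\{tu:0\le t\le t^\ast\}$ and $R$ meets $\partial D$ in the single point $t^\ast u$ (or not at all), so $D$ is a proper star domain.

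The hard part is conceptual rather than computational: for a general star domain $\partial D$ may be strictly larger than the gauge level set $\{g_D=1\}$, so "one boundary point per ray" is genuinely something to prove; the leverage comes from using both faces of the hypothesis — the downward-closedness of $\tau(D)$ coming from completeness, together with the $\mathbb{T}^N$-symmetry — to forbid a ray from re-entering $D$, and from the elementary but essential point that the umbra of a small ball at $p$ never fattens beyond the spanned polydisc $P^-$, which in turn sits inside $D$ precisely because $p$ has no zero coordinate. (One should also note that the excerpt's phrase "contained in $D$" in the definition of umbral is to be read, consistently with the proof of the earlier lemma, as "contained in $D$ apart from the apex $p$," since $p\in\partial D$ is not in $D$.)
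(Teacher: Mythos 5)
Your proof is correct, and it diverges from the paper's in two useful ways. First, you actually prove the umbral assertion: the paper's proof opens with ``in view of the foregoing lemma, all that needs to be established is\dots'' and never writes down the argument that the umbra of a small ball at $p\in\partial D\setminus A$ sits inside the open polydisc $P^-$ spanned by $p$, which in turn sits inside $D$; your norm estimate $\vert sp_j+(1-s)b_j\vert<\vert p_j\vert$, combined with exhausting $P^-$ by the polydiscs spanned by points of $D$ near $p$, supplies exactly that missing step. Second, for properness the paper reduces, via the lemma, to rays along the coordinate axes and treats those with the quantities $M_j=\sup\{\vert z_j\vert : z\in D\}$; but that reduction only accounts for rays meeting the coordinate frame $A$ when they lie along an axis, whereas for $N\ge 3$ a ray can lie in $A$ without being a coordinate axis (e.g.\ the ray through $(1,0,1)$ in $\mathbb{C}^3$). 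Your direct argument --- that $\{t\ge 0 : tu\in D\}$ is an interval $[0,t^*)$ and that no $t_0u$ with $t_0>t^*$ can be adherent to $D$, since the downward-closedness of $\tau(D)$ coming from completeness would then pull the whole segment $\{tu : 0\le t<t_0\}$ back into $D$ --- treats every ray uniformly, so it covers these rays as well and in fact renders the lemma and the umbral property logically unnecessary for the properness conclusion (the umbral statement of course still has to be proved, as the first assertion of the proposition). Your reading of ``contained in $D$'' in the definition of umbral as ``contained in $D$ apart from the apex'' is also the one the paper itself uses in the proof of the preceding lemma, so that gloss is consistent rather than a correction.
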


\begin{proof}
In view of the foregoing lemma, all what needs to be established is that a ray along say the first coordinate axis
 $\{ (t \zeta,0) \in \mathbb{C}^N \; : \; t \in\mathbb{R} \}$
where $\zeta \in \mathbb{C}$, intersects the boundary $\partial D$ at most at one point; likewise, for rays along any of the other coordinate axes.
To establish this, note that if 
\[
M_j := \sup\{ \vert z_j \vert \; : \; z \in D\} = \sup \{\vert z_j \vert \; : \; z \in \overline{D}\} \in (0, +\infty],
\]
then the intersection of $D$ with the $z_j$-axis is a disc $\Delta_j$ of radius $M_j$. If a ray along any of these axes, say the $j$-th one, intersects $\partial D$ at more than one point, then firstly, one -- infact, all but one -- of those points must have its $j$-th coordinate in modulus strictly less less than $M_j$. Let $r$
denote this set of intersection, with $p \in r$ being the point whose $j$-th coordinate in modulus is $M_j$ -- there is nothing to 
deal with $r$ is empty.
Let $q$ be any other point of $r$.
Both $p$ and $q$ must have all its coordinates $0$ except for the $j$-th one. As the intersection $\Delta_j$ is a disc and $q_j<M_j$, it
follows that the points of $D$ which occur in any neighbourhood of $p$ must have non-zero $j$-th coordinates. Picking any such point
and considering the polydisc spanned by it, we obtain points of $D$ on the $z_j$-axis, whose distance from the origin exceeds the radius 
of $\Delta_j$. Contradiction.
\end{proof}

\noindent We may now conclude that topologically any such domain is the same as the ball. Infact we may essentially obtain topological equivalence together with their boundaries. Further, the boundary of any bounded complete Reinhardt domain in $\mathbb{C}^N$ is homeomorphic to the sphere 
$\partial \mathbb{B}^N$. The study of the regularity of the boundaries of such domains is tantamount to that of their radial function, all dealt with 
in the following proposition.

\begin{prop}
Let $D$ be any star domain not necessarily bounded, star-like with respect to the origin in $\mathbb{R}^N$. Suppose that $D$ is proper i.e., each ray emanating from the origin intersects $\partial D$ at most at one point. The gauge functional and the radial function are continuous functions into the extended reals. Further when $D$ is bounded, the closure $\overline{D}$ is homeomorphic to $\overline{\mathbb{B}^N}$; the same is true when $D$ is unbounded provided certain points at infinity are appropriately appended to $\partial D$. 
\end{prop}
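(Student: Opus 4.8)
The plan is to prove the statement in two stages: first the continuity of the radial function $\rho = \rho_D$, which immediately yields that of the gauge $g = 1/\rho$, and then the topological assertions, by building a ``radial stretching'' homeomorphism from the closed Euclidean ball onto (a compactification of) $\overline D$. For the first stage, lower semicontinuity of $\rho$ is already available from the preceding lemma, proved for \emph{arbitrary} star domains, so the whole content is upper semicontinuity; this is precisely the place where the properness hypothesis is indispensable.

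To get upper semicontinuity I would first reduce, using the degree $-1$ homogeneity $\rho(tx) = t^{-1}\rho(x)$, to the restriction of $\rho$ to the unit sphere $S^{N-1}$; once that restriction is continuous, $\rho(x) = \rho(x/|x|)/|x|$ is continuous on $\mathbb{R}^N \setminus \{0\}$. The key elementary observation is: for a proper star domain $D$ and a unit vector $u$ with $\rho(u) < \infty$, every point $tu$ with $t > \rho(u)$ lies in the \emph{open} set $\mathbb{R}^N \setminus \overline D$ --- indeed $tu \notin D$, since $\{\lambda \ge 0 : \lambda u \in D\} = [0,\rho(u))$ by openness of $D$ together with star-likeness and the fact that $\rho(u)u \in \partial D$, while $tu \notin \partial D$ because the \emph{unique} intersection point of $\partial D$ with the ray $\mathbb{R}_{\ge 0}u$ is $\rho(u)u$. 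Granting this, suppose $u_j \to u_0$ in $S^{N-1}$ while $\limsup_j \rho(u_j) > \rho(u_0)$; passing to a subsequence, $\rho(u_j) \to L$ with $L > \rho(u_0)$ (so $\rho(u_0) < \infty$). Pick $t_0$ with $\rho(u_0) < t_0 < L$. Then $t_0 u_0 \in \mathbb{R}^N \setminus \overline D$, an open set containing $t_0 u_j$ for all large $j$; hence $\rho(u_j) \le t_0$ eventually, contradicting $\rho(u_j) \to L > t_0$. This proves $\rho$ continuous into $(0,+\infty]$, whence $g = 1/\rho$ is continuous on $\mathbb{R}^N \setminus \{0\}$ and also at the origin, since a ball $B(0,c) \subset D$ forces $g(x) \le |x|/c \to 0$.

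For the topology I would treat the bounded case first. There $\rho$ is finite and bounded below by some $c > 0$ on $S^{N-1}$, so $\Phi(x) := \rho(x/|x|)\,x$ for $x \ne 0$ and $\Phi(0) := 0$ defines a continuous map $\overline{\mathbb{B}^N} \to \overline D$ (continuity at $0$ from $|\Phi(x)| \le |x|\sup_{S^{N-1}}\rho \to 0$). Properness and star-likeness give $\overline D \cap \mathbb{R}_{\ge 0}u = [0,\rho(u)]\,u$ for every unit $u$, from which $\Phi$ is a bijection onto $\overline D$: the preimage of a nonzero $y = |y|u \in \overline D$ is $(|y|/\rho(u))\,u$, of norm $\le 1$. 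A continuous bijection from a compact space onto a Hausdorff space is a homeomorphism, so $\overline D \cong \overline{\mathbb{B}^N}$. For the unbounded case I would reduce to the bounded one through the homeomorphism $F : \mathbb{R}^N \to \mathbb{B}^N$, $F(v) = v/(1+|v|)$, which carries rays through the origin to rays through the origin by an increasing reparametrization and extends to a homeomorphism of the directional compactification $\mathbb{R}^N \sqcup S^{N-1}_\infty$ onto $\overline{\mathbb{B}^N}$. Then $F(D)$ is again a star domain about the origin, now bounded, with radial function $\psi \circ \rho_D$ where $\psi(t) = t/(1+t)$ and $\psi(\infty) = 1$; using the continuity of $\rho_D$ just established, one checks that $F(D)$ is proper and that the limit points of $F(D)$ on $\partial \mathbb{B}^N$ are exactly the images of the recession directions $\{u : \rho_D(u) = \infty\}$. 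The bounded case applied to $F(D)$ gives $\overline{F(D)} \cong \overline{\mathbb{B}^N}$; transporting back through $F$, the space $\overline D$ with the points $\{\infty_u : \rho_D(u) = \infty\}$ appended is homeomorphic to $\overline{\mathbb{B}^N}$, which is the claim with ``certain points at infinity appropriately appended''. The main obstacle, and the only genuinely delicate point, is the upper-semicontinuity argument: without properness $\rho$ can jump downward along a convergent sequence of directions, so the hypothesis must be used exactly as above; the remaining verifications (bijectivity of $\Phi$, properness of $F(D)$) are then routine consequences of the same one-boundary-point property.
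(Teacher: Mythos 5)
Your proof is correct, and while the overall architecture (continuity of the radial function first, then a radial stretching homeomorphism) matches the paper's, the two key technical steps are carried out quite differently. For continuity of $\rho$, the paper argues in the bounded case that the normalization $x \mapsto x/|x|$ is a continuous bijection from the compact set $\partial D$ onto $\partial\mathbb{B}^N$, hence a homeomorphism, and reads off continuity of $\rho$ from the inverse; in the unbounded case it upgrades this to a properness argument for $N:\partial D \to \partial\mathbb{B}^N\setminus N(E)$, ruling out finite cluster values of $\rho$ along compactly divergent boundary sequences by a similar-triangles estimate. You instead split continuity into lower semicontinuity (quoted from the preceding lemma, valid for all star domains) plus a direct upper semicontinuity argument: properness forces $tu\in\mathbb{R}^N\setminus\overline D$ for $t>\rho(u)$, and openness of that complement kills any sequence $u_j\to u_0$ with $\limsup\rho(u_j)>\rho(u_0)$. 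This is shorter, isolates exactly where properness enters, and avoids the compactness/properness machinery. For the topology of the unbounded case, the paper writes down the explicit map $\Phi(x)=\bigl(|x|/(1+\rho(x/|x|)^{-1}-|x|)\bigr)x$ from $\mathbb{B}^N$ onto $D$ and verifies it is injective, continuous and proper; you reduce to the bounded case via the ray-preserving compactification $v\mapsto v/(1+|v|)$, which makes the "points at infinity" statement transparent (they are exactly the recession directions $\{u:\rho_D(u)=\infty\}$) at the cost of checking that $F(D)$ is again a proper star domain --- a check that, as you note, uses the continuity of $\rho_D$ just established and is routine. Both routes are sound; yours trades the paper's explicit formulas for a cleaner logical reduction.
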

 
\begin{proof}
\noindent As the gauge and radial functions are reciprocals of each other, it suffices to show the continuity of $\rho = \rho_D$. Being radial, it suffices to further restrict attention to showing continuity of $\rho$ on the unit sphere. Recall that for a unit
vector $v$, $\rho(v)$ is the distance of the origin to $\partial D$ along the ray spanned by $v$ namely, $\{ \lambda v \; :\; \lambda\geq 0\}$. Note that there is no reference to any norm in the definition of $\rho$; the distance last mentioned is with respect to that norm with respect to which $v$ is a unit vector -- the norm as usual is the $l^2$-norm unless otherwise stated. Thus our task boils down to verifying continuity of the variation of the distance 
from the origin of  points on the boundary, essentially. To be rigorous, first note that the mapping $\partial D \to \partial \mathbb{B}^N$ given by
$ x \to x/\vert x \vert$ is obviously continuous, is injective by the hypothesis on $\partial D$ and is surjective owing to the  boundedness of $D$. Altogether we have a one-to-one, continuous map from the compact set $\partial D$ onto $\partial \mathbb{B}^N$; such a mapping must be a homeomorphism.
Therefore, its inverse $x \to \rho(x)x$ must be continuous from which it is easy to argue the continuity of $\rho$. While we can extend these maps to furnish a homeomorphism between $D$ and $\mathbb{B}^N$, we shall proceed forthwith to the case when $D$ is unbounded, the dealing of which will take care of this case as well.
\medskip\\
\noindent Having dropped the assumption of boundedness on $D$, we intend to use the following variant of the key tool used in the foregoing proof of continuity of $\rho$: a proper, injective continuous mapping must be a homeomorphism. Recall that a map is termed proper, if every compact subset of the 
range has its inverse image compactly contained in the domain of the map. Equivalently, a mapping $f: D_1 \to D_2$ between a pair of domains 
in $\mathbb{R}^N$ is {\it proper} if for every sequence $\{x_j\}$ which accumulates only on $\partial D_1 \cup \{\infty\}$ -- with $\infty$ to be understood as the singleton required to be appended to $\mathbb{R}^N$ in its one-point compactification and being relevant here only when the domain is unbounded -- 
the image sequence $\{f(x_j)\}$ also has the property that it clusters only on $\partial D_2 \cup \{ \infty\}$; this is expressed in short by saying that 
$f$ preserves `boundary sequences'.  Indeed, to now furnish a rigorous proof in the case when $D$ is not necessarily bounded, first let $E:= \{ x \in D \setminus \{0\} \; :\; \rho(x) = +\infty\}$. Consider again the map $N: x \to x /\vert x \vert$, this time as a mapping
\[
N: \partial D \to \partial \mathbb{B}^N \setminus N(E).
\]
Then again $N$ is injective by virtue of the hypothesis that each ray emanating from the origin hits $\partial D$ at most once (at a point where
$\rho$ is finite). Note that $N(E)$ is precisely the set of points on $\partial \mathbb{B}^N$ at which $\rho$
is infinite, which also enables the observation that
if $\{x_k\}$ is a sequence in the closed set $\partial D$ which is `compactly divergent' or more simply put 
$\vert x_k \vert \to \infty$, then along the sequence $v_k := x_k/ \vert x_k \vert$ on the unit sphere,
\begin{equation} \label{rhofact}
\rho(v_k) = \vert x_k \vert \rho(x_k) = \vert x_k \vert  \to \infty.
\end{equation}
Here we have relied on the unique intersection property of the hypotheses to conclude for our $x_k \in \partial D$ that $\rho(x_k)=1$. After passing to a subsequence if necessary, we may assume that 
$\{v_k\}$ is convergent to $v_0 \in \partial \mathbb{B}^N$ with $\rho(v_0)$ is finite. Next, let $\hat{v}_0 := \rho(v_0)v_0$ and $\tilde{v}_0$ any 
point on the ray $R_0$ through $v_0$ with $\vert \tilde{v}_0 \vert > \vert \hat{v}_0 \vert$. Then first of all note that $\tilde{v}_0$ must
lie in the domain $\tilde{D}= \mathbb{C}^N \setminus \overline{D}$. Thereafter consider a ball $\tilde{B}$ inside $\tilde{D}$ about $\tilde{v}_0$. 
Observe that the ray through any point $v_k$ sufficiently close to $v_0$ will enter $\tilde{B} \subset \tilde{D}$: choose
$k$ large such that ${\rm dist}(v_0,v_k)< \delta(\vert v_0 \vert/\vert \tilde{v}_0 \vert)$ where $\delta$ is the radius of $\tilde{B}$; a `similar triangles' argument shows that the ray through 
$v_k$ indeed enters $\tilde{B}$ which lies away from $\overline{D}$. By definition of $\rho_D(v_k)$, this forces $\rho(v_k)$ to be bounded above by the number $\rho(v_0) + \delta$ to contradict the fact that $\rho(v_k) \to \infty$. We conclude therefore that $\rho(v_0)$ must be infinite and
$v_0 \in N(E)$. This means that $N(v_k)$ accumulates only on $N(E)$ showing that the mapping $N$ as above, is a proper, one-to-one,
continuous map from $\partial D$ onto $\partial \mathbb{B}^N \setminus N(E)$, hence a homeomorphism. Thus, its inverse, which is just the map 
$x \to \rho(x)x$ must be continuous, which implies the continuity of $\rho$ on $\partial \mathbb{B}^N \setminus N(E)$. This assures that the following map
is continuous:
\[
\Phi(x) = \Big(\frac{\vert x \vert}{1+ \frac{1}{\rho(x/\vert x \vert)} - \vert x \vert} \Big) x,
\]
as a mapping from $\mathbb{B}^N$ onto $D$, extended to the origin by setting $\Phi(0)=0$ and with the understanding that the reciprocal of 
$\rho(x/ \vert x \vert)$ is to be replaced by zero, whenever $x \in \mathbb{B}^N \setminus \{0\}$ has $\rho(x)$ infinite. We note that this map is a
one-to-one, continuous mapping with the following boundary behaviour: when $x \to v \in \partial \mathbb{B}^N$ with $\rho(v)$ finite, 
then $\Phi(x) \to \rho(v)v \in \partial D$; whereas, for any sequence $x_n \in \mathbb{B}^N$ with $x_n \to v$ whose $\rho(v) = \infty$, we have
$\vert \Phi(x_n) \vert \to \infty$, so  $\Phi(x_n)$ cannot accumulate anywhere in $D$. This means that $\Phi$ is a one-to-one, proper continuous mapping 
allowing us to conclude that $\Phi$ must be a homeomorphism, as desired. We conclude with the observation that $\partial D$ is homeomorphic to
a spherically convex subset of $\partial \mathbb{B}^N$.
\end{proof}

\end{document}